\newtheorem{theorem}{Theorem}[section]
\newtheorem{lemma}[theorem]{Lemma}
\newtheorem{cor}[theorem]{Corollary}
\newtheorem{definition}[theorem]{Definition}
\newtheorem{proposition}[theorem]{Proposition}
\newtheorem{remark}[theorem]{Remark}
\newtheorem{example}[theorem]{Example}
\newtheorem{conj}[theorem]{Conjecture}
\newtheorem{atheorem}{Theorem A\hskip -1pt}[section]
\newtheorem{adefinition}{Definition A\hskip -1pt}[section]
\newtheorem{aproposition}{Proposition A\hskip -1pt}[section]
\newtheorem{alemma}{Lemma A.\hskip -1pt}
\newtheorem{aexample}{Example A.\hskip -1pt}
\newtheorem{bdefinition}{Definition B\hskip -1pt}[section]
\newtheorem{bproposition}{Proposition B\hskip -1pt}[section]
\newtheorem{bexample}{Example B.\hskip -1pt}
\newtheorem{cdefinition}{Definition C\hskip -1pt}[section]
\newtheorem{cproposition}{Proposition C\hskip -1pt}[section]
\newtheorem{cexample}{Example C.\hskip -1pt}
\def\pagenumber{1}
\begin{document}
\setcounter{page}{\pagenumber}
\newcommand{\T}{\mathbb{T}}
\newcommand{\R}{\mathbb{R}}
\newcommand{\Q}{\mathbb{Q}}
\newcommand{\N}{\mathbb{N}}
\newcommand{\Z}{\mathbb{Z}}
\newcommand{\tx}[1]{\quad\mbox{#1}\quad}
\parindent=0pt
\def\SRA{\hskip 2pt\hbox{$\joinrel\mathrel\circ\joinrel\to$}}
\def\tbox{\hskip 1pt\frame{\vbox{\vbox{\hbox{\boldmath$\scriptstyle\times$}}}}\hskip 2pt}
\def\circvert{\vbox{\hbox to 8.9pt{$\mid$\hskip -3.6pt $\circ$}}}
\def\IM{\hbox{\rm im}\hskip 2pt}
\def\COIM{\hbox{\rm coim}\hskip 2pt}
\def\COKER{\hbox{\rm coker}\hskip 2pt}
\def\TR{\hbox{\rm tr}\hskip 2pt}
\def\GRAD{\hbox{\rm grad}\hskip 2pt}
\def\RANK{\hbox{\rm rank}\hskip 2pt}
\def\MOD{\hbox{\rm mod}\hskip 2pt}
\def\DEN{\hbox{\rm den}\hskip 2pt}
\def\DEG{\hbox{\rm deg}\hskip 2pt}

\title[The Congruent Numbers and the Birch Swinnerton-Dyer conjecture]{THE CONGRUENT NUMBER PROBLEM \\ AND THE BIRCH SWINNERTON-DYER CONJECTURE}

\author{Agostino Pr\'astaro}
\maketitle
\vspace{-.5cm}
{\footnotesize
\begin{center}
Department SBAI - Mathematics, University of Rome La Sapienza,\\ Via A.Scarpa 16,
00161 Rome, Italy. \\
E-mail: {\tt agostino.prastaro@uniroma1.it}
\end{center}
}
\vskip 0.5cm
--
\vskip 0.5cm
\begin{abstract}
By introducing a new point of view in Algebraic Topology relating elliptic curves in $\mathbb{R}^2$ and suitable bordism groups, the congruent number problem is solved showing that the Tunnell's theorem is also sufficient. This could be considered also an indirect proof that the Birch Swinnerton-Dyer conjecture is true.
\end{abstract}

\vskip 0.5cm

\noindent {\bf AMS Subject Classification:} 11M26; 14H55; 30F30; 32J05; 33C80; 33C99; 33D90; 33E90; 55N35.

\vspace{.08in} \noindent \textbf{Keywords}:

\section[Introduction]{\bf Introduction}\label{introduction-section}

\vskip 0.5cm

The congruent number problem was a longstanding open problem in Number Theory, that more recently has been related also to the famous Birch and Swinnerton-Dyer conjecture. The vast literature and the Clay-prize give evidence the Mathematical Community's interest on this subject. For a very good introduction about it is advisable to look the paper by A. Wiles \cite{WILES2} \footnote{For complementary information see also the following Wikipedia link:

\href{http://en.wikipedia.org/wiki/Birch_and_Swinnerton-Dyer_conjecture}{\tt Birch-and-Swinnerton-Dyer-conjecture}.}

In this paper we solve the congruent problem by recasting it in the algebraic topology of suitable bordism groups. In fact, we introduce two new bordism groups, ({\em $n$-elliptic bordism groups} and {\em$n$-congruent-bordism-groups}), in the plane $\mathbb{R}^2$ where are considered elliptic curves associated to the congruent number problem. (See Definition \ref{n-elliptic-bordism-groups} and Definition \ref{n-congruent-bordism-groups}.) We show, by utilizing such bordism groups, the exactness of the following short $0$-sequence:
\begin{equation}\label{introduction-short-exact-sequence}
    \xymatrix{0\ar[r]&\mathbb{N}_{congr}\ar[r]^{i}&{}_{\square}\mathbb{N}\ar[r]^{L_\bullet}&\mathbb{Z}\ar[r]&{\rm coker}(L_\bullet)\ar[r]&0\\}
\end{equation}

Here ${}_{\square}\mathbb{N}$ is the subset of $\mathbb{N}$ of square-free numbers, and $\mathbb{N}_{congr}$ is the subset of  ${}_{\square}\mathbb{N}$ of square-free congruent numbers ({\em strong-congruent numbers}). The mapping $L_\bullet$ is defined in Lemma \ref{elleiptic-congruent-bordism-groups-and-birch-swinnerton-dyer-conjecture}. We show that ${\rm ker}(L_\bullet)=\mathbb{N}_{congr}$, namely that sequence (\ref{introduction-short-exact-sequence}) is exact. (See Theorem \ref{main-theorem}.) This gives an indirect proof that the Birch and Swinnerton-Dyer conjecture is true. (See Remark \ref{main-corollary}.) Taking into account that the set $\mathbb{P}_{yth}$ of integers interpretable as areas of Pythagorean right triangles, surjectively projects on $\mathbb{N}_{congr}$, we can obtain all possible Pythagorean triangles. Furthermore, each strong-congruent number $n\in \mathbb{N}_{congr}$ identifies an equivalence class in the set of congruent numbers. Any congruent number $q$ belonging to the equivalence class $[n]$, is obtained from $n$ by multiplying $n$ for a square $m^2$, with $m\in \mathbb{Q}$: $q=m^2n$.

The paper, after the Introduction, splits into three more sections and three appendices. 2. The congruent number problem. [This is a preparatory section, where some fundamental results are recast in the paper-style.] 3. The Birch and Swinnerton-Dyer conjecture. [Here are resumed some important results that are central for our proof. Theorem \ref{modularity-theorem} (Modularity theorem); Conjecture \ref{definition-bs-d-conjecture} (The  Birch and Swinnerton-Dyer conjecture); Proposition \ref{tunnell-theorem} (Tunnell's theorem); Theorem \ref{coates-wiles-theorem} (Coates-Wiles theorem).] 4. Elliptic and congruent bordism groups. [In this section are contained the main results. It contains the definitions of elliptic bordism groups and congruent bordism groups and their characterizations with respect to diffeomorphisms and suitable homotopies in $\mathbb{R}^2$. Theorem \ref{main-theorem} contains the solution of the congruent problem in Number Theory. Remark \ref{main-corollary} emphasizes that our solution of the Congruent number problem can be considered also an indirect proof that the Birch and Swinnerton-Dyer conjecture is true.] Appendix A. The function $L(E,s)$ and the infinitude of primes. Appendix B. Riemann surfaces and modular curves. Appendix C. Modular functions, forms and cusps.\footnote{These subjects are included in this paper in order to satisfy its expository style. We have adopted this style do not make the paper beyond any mathematical grasp, since the mathematics involved here touches sectors that can be considered far from the standard Number Theory. On the other hand experts in Algebraic Topology do not necessarily are also well introduced in Number Theory ...}

\section{\bf The congruent number problem}\label{section-congruent-number-problem}
\vskip 0.5cm

In this section we shall consider some fundamental definitions and results about the congruent number problem that will be utilized in the next sections.

\begin{definition}[Congruent number]\label{congruent-number}
A {\em congruent number} is any positive rational number $q$ such that there exists a right triangle of sides $(a,b,c)$, $a,\, b,\, c\in \mathbb{Q}$, with area $q$, namely $\frac{a\cdot b}{2}=q\in\mathbb{Q}$. (Here $c$ is the length of the hypotenuse.) We denote by $[q|a,b,c]$ a congruent number $q$, with its corresponding right triangle of sides $(a,b,c)$. Let us denote by $\mathbb{Q}_{congr}\subset\mathbb{Q}$ the subset of congruent numbers.
\end{definition}

\begin{lemma}[Congruent numbers identified via a subset of natural numbers]\label{lemma-residue-classes-congruent-numbers}

$\bullet$\hskip 2pt If $q$ is a congruent number, then also $s^2\cdot q$ is so, for any $s\in\mathbb{Q}$.

$\bullet$\hskip 2pt There is an equivalence relation $\sim$ between congruent numbers, such that each equivalence class is identified by a square-free positive integer. In other words, the set $\mathbb{N}_{congr}$, of congruent numbers, up to rational-conform equivalence, can be identified with a subset of ${}_{\square}\mathbb{N}\subset \mathbb{N}$, where ${}_{\square}\mathbb{N}$ is the set of square-free integers contained into $\mathbb{N}$: $\mathbb{Q}_{congr}/\sim\, \cong \mathbb{N}_{congr}\subset\mathbb{N}$. One has the commutative and exact diagram {\rm(\ref{commutative-exact-diagram-stron-congruent-numbers-and-rational})}.
\end{lemma}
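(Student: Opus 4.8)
The plan is to establish the two bullet points separately, the first being the algebraic foundation for the equivalence relation asserted in the second. For the first bullet, I would start from the defining datum of a congruent number: given a congruent number $q$ with an associated rational right triangle $[q\,|\,a,b,c]$, so that $a^2+b^2=c^2$ and $\frac{1}{2}ab=q$ with $a,b,c\in\mathbb{Q}$, I would exhibit for any $s\in\mathbb{Q}$ (taking $s\neq 0$, since the case $s=0$ is vacuous) an explicit triangle realizing $s^2 q$. The natural choice is to scale all three sides by $|s|$: the triple $(|s|a,\,|s|b,\,|s|c)$ still satisfies the Pythagorean relation because $(|s|a)^2+(|s|b)^2=s^2(a^2+b^2)=s^2c^2=(|s|c)^2$, and its area is $\frac{1}{2}(|s|a)(|s|b)=s^2\cdot\frac{1}{2}ab=s^2 q$. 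Since all scaled sides remain rational, $s^2 q\in\mathbb{Q}_{congr}$, which proves the first bullet.

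For the second bullet, I would define the relation $\sim$ on $\mathbb{Q}_{congr}$ by declaring $q\sim q'$ precisely when $q'=s^2 q$ for some $s\in\mathbb{Q}^{\times}$. I would verify that this is an equivalence relation — reflexivity via $s=1$, symmetry via replacing $s$ by $1/s$, and transitivity via multiplying the two scaling factors — and note that well-definedness on $\mathbb{Q}_{congr}$ is guaranteed by the first bullet, which ensures the whole class stays inside $\mathbb{Q}_{congr}$. The key structural step is then to select a canonical representative in each class. Writing any positive rational $q$ in lowest terms and clearing denominators, one can multiply $q$ by the square of a suitable rational so as to reach a positive integer that is square-free: concretely, factor the numerator and denominator into primes, and absorb every prime occurring to an even power and the square part of every odd power into $s^2$, leaving a unique square-free integer $n$ with $q\sim n$. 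This yields a well-defined map $\mathbb{Q}_{congr}/\!\sim\ \longrightarrow {}_{\square}\mathbb{N}$, and I would check it is injective because two distinct square-free positive integers cannot differ by the square of a rational (a square-free integer determines its radical, hence is a complete invariant of its square-class in $\mathbb{Q}^{\times}/(\mathbb{Q}^{\times})^2$). Identifying the image with $\mathbb{N}_{congr}\subset{}_{\square}\mathbb{N}$ then gives the asserted bijection $\mathbb{Q}_{congr}/\!\sim\ \cong\mathbb{N}_{congr}$.

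Finally, to obtain the commutative and exact diagram referenced as (\ref{commutative-exact-diagram-stron-congruent-numbers-and-rational}), I would assemble the quotient projection $\mathbb{Q}_{congr}\twoheadrightarrow\mathbb{Q}_{congr}/\!\sim$, the bijection just constructed, and the inclusion $\mathbb{N}_{congr}\hookrightarrow{}_{\square}\mathbb{N}$ into a single commuting square or sequence, and confirm exactness at each node by tracking which arrows are surjections (the quotient map), isomorphisms (the identification), and injections (the inclusion).

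The main obstacle I anticipate is not the scaling computation, which is routine, but the rigorous justification that each square-class in $\mathbb{Q}_{congr}$ contains exactly one square-free positive integer and that this integer is a genuine invariant. This is essentially the statement that the square-free part of a positive rational is well-defined and that the reduction map $\mathbb{Q}^{\times}_{>0}\to\mathbb{Q}^{\times}_{>0}/(\mathbb{Q}^{\times})^2$ restricts to a bijection onto the square-free positive integers; I would make this precise using unique factorization, taking care that the equivalence relation respects the congruent-number property (guaranteed by the first bullet) so that the canonical representative is itself congruent and hence genuinely lies in $\mathbb{N}_{congr}$.
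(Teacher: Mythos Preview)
Your proposal is correct and follows essentially the same approach as the paper: scale the triangle $(a,b,c)$ by $s$ to realize $s^2q$ as a congruent number, then pass to the quotient $\mathbb{Q}^{\times}/(\mathbb{Q}^{\times})^{2}$ and use unique factorization to pick out the square-free positive integer representative in each class. You are in fact more careful than the paper (which largely says ``the proof is direct''), particularly in using $|s|$ to keep side lengths positive and in spelling out the injectivity of the square-free representative map.
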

\begin{equation}\label{commutative-exact-diagram-stron-congruent-numbers-and-rational}
     \scalebox{0.8}{$\xymatrix{\mathbb{Q}^{\times}\ar[r]^{()^2}&\mathbb{Q}^{\times}\ar[r]&\mathbb{Q}^{\times}/(\mathbb{Q}^{\times})^2\ar@{=}[d]^{\wr}\ar[r]&0\\
    &&{}_{\square}\mathbb{N}&\\
    \mathbb{Q}_{congr}\ar[uu]\ar[r]&\mathbb{Q}_{congr}/\sim\ar@{=}[r]^{\sim}&\mathbb{N}_{congr}\ar[u]\ar[r]&0\\
    0\ar[u]&&0\ar[u]&\\}$}
\end{equation}

\begin{proof}
$\bullet$\hskip 2pt In fact, if $[q|a,b,c]$ is a congruent number with associated right triangle, then also $[s^2\cdot q|s\cdot a,s\cdot b,s\cdot c]$ is a congruent number with associated right triangle. The proof is direct.

$\bullet$\hskip 2pt Therefore we can consider equivalent right triangles $(a,b,c)$ and $(\bar a,\bar b,\bar c)$, identified by congruent numbers $q$ and $\bar q$, respectively, whether they are rational-conform, namely $\bar a=s\cdot a$, $\bar b=s\cdot b$, $\bar c=s\cdot c$, $s\in\mathbb{Q}$, iff $\bar q=s^2\cdot q$. As a by-product, it follows that a congruent number identifies an equivalence class in the group $\mathbb{Q}^{\times}/(\mathbb{Q}^{\times})^{2}$. Here every residue class contains one square-free positive integer, that can be utilized to identify the class.\footnote{This is the motivation, that allows to talk about congruent numbers, simply as square-free positive integers. (Recall that a {\em square-free} integer is one divisible by no perfect square, except $1$. So a square-free integer is one $n\in\mathbb{N}$ such that in its prime decomposition $n=p_1^{\alpha_1}\cdots p_k^{\alpha_k}$ all exponent $\alpha_i$, $i=1,\cdots,k$, are $\alpha_i=1$. This is equivalent to say that the ring $\mathbb{Z}_n=\mathbb{Z}/n\mathbb{Z}$ is a product of fields, $\mathbb{Z}_n=\mathbb{Z}_{p_1}\cdots\mathbb{Z}_{p_k}$, since $\mathbb{Z}_{p_i}$, $i=1,\cdots,k$, are fields ($p_i$ is prime). For example are square-free integers $1,\, 2,\, 3,\, 5,\, 6,\, 7,\, 10,\, 11,\, 13$. Instead are not square-free integers $9=3^2$ and $20=2^2\cdot 5$.)} Therefore any equivalence class of congruent numbers, obtained by identifying conform right triangles, can be represented by a square-free integer. Let us denote by $\mathbb{N}_{congr}$ the set of such natural numbers, and by ${}_{\square}\mathbb{N}$ the set of square-free natural numbers. Then $\mathbb{N}_{congr}\subset{}_{\square}\mathbb{N}\subset \mathbb{N}$.\footnote{$\mathbb{N}_{congr}$ is a proper subset of ${}_{\square}\mathbb{N}$. For this it is enough to look at the Tab. \ref{table-examples-of-strong-congruent-numbers}. On the other hand it is well known that the square-free numbers $1,\, 2,\, 3\in{}_{\square}\mathbb{N}$ are not congruent numbers.}
\end{proof}

The characterization of square-free positive integers can be made also by means of the M\"obius function.
\begin{definition}[M\"obius function]\label{mobius-function}
The {\rm M\"obius function} $\mu(n)$ is defined in {\em(\rm\ref{formula-mobius-function})}.
\begin{equation}\label{formula-mobius-function}
  \mu(n)=\left\{\begin{array}{ll}
                  1 &,\, \hbox{\rm if $n=1$} \\
                  (-1)^k&, \, \hbox{\rm if $n=p_1\cdots p_k,\, p_i\in P$}\\
                  0&,\, \hbox{\rm if $n$ has a squared prime factor}.\\
                \end{array}\right.
\end{equation}
In {\em(\ref{formula-mobius-function})} $P$ is the set of primes.
\end{definition}
\begin{proposition}[Properties of M\"obius function]\label{properties-mobius-function}

$\bullet$\hskip 2pt $\mu(n)$ is a multiplicative function: $\mu(n_1\cdot n_2)=\mu(n_1)\cdot\mu(n_2)$, $n_1$ and $n_2$ coprime.

$\bullet$\hskip 2pt {\rm(Dirichlet series that generates the M\"obius function)} $$\sum_{1\le n\le\infty}\frac{\mu(n)}{n^s}=\frac{1}{\zeta(s)},\, s\in\mathbb{C},\, \Re(s)>1.$$

$\bullet$\hskip 2pt $n\in{}_{\square}\mathbb{N}$ iff $\mu(n)\not=0$.
\end{proposition}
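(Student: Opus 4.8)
The plan is to treat the three assertions in turn, all of which follow from the explicit three-case definition (\ref{formula-mobius-function}) together with the Euler product for $\zeta(s)$.

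For the first bullet, I would argue by cases on the square-free status of the two coprime arguments. Writing the prime factorizations of $n_1$ and $n_2$, the hypothesis $\gcd(n_1,n_2)=1$ means they involve disjoint sets of primes. If either factor is divisible by the square of a prime, then so is the product $n_1 n_2$, whence both $\mu(n_1)\cdot\mu(n_2)$ and $\mu(n_1 n_2)$ vanish and the identity holds trivially. Otherwise both are square-free, say $n_1=p_1\cdots p_j$ and $n_2=q_1\cdots q_k$ with all the $p_i,q_\ell$ distinct primes; then $n_1 n_2$ is a product of $j+k$ distinct primes, so $\mu(n_1 n_2)=(-1)^{j+k}=(-1)^j(-1)^k=\mu(n_1)\cdot\mu(n_2)$, as required.

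For the third bullet, I note it is immediate from (\ref{formula-mobius-function}): the value $\mu(n)$ is set to $0$ precisely when $n$ has a squared prime factor, and equals $\pm 1$ (hence is nonzero) otherwise. Thus $\mu(n)\not=0$ characterizes exactly the absence of a squared prime factor, which is the defining condition for membership in ${}_{\square}\mathbb{N}$.

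The substantive step is the second bullet. For $\Re(s)>1$ the series $\sum_n \mu(n) n^{-s}$ converges absolutely, since $|\mu(n)|\le 1$ and $\sum_n n^{-\sigma}$ converges for $\sigma>1$; the same bound legitimises every rearrangement below. I would start from the Euler product $\zeta(s)=\prod_{p\in P}(1-p^{-s})^{-1}$, valid for $\Re(s)>1$, and hence $1/\zeta(s)=\prod_{p\in P}(1-p^{-s})$. Using the first bullet, $\mu$ is multiplicative, and by definition $\mu(p)=-1$ while $\mu(p^m)=0$ for $m\ge 2$; therefore each local factor can be written as $1-p^{-s}=\sum_{m\ge 0}\mu(p^m)p^{-ms}$. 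Expanding the absolutely convergent product over primes and collecting terms according to the unique factorisation $n=\prod_p p^{m_p}$ yields $\prod_{p\in P}\bigl(\sum_{m\ge 0}\mu(p^m)p^{-ms}\bigr)=\sum_{n\ge 1}\mu(n)n^{-s}$, which is the claimed identity. The only delicate point is the passage from the finite-product expansion to the infinite one, and I expect this to be the main obstacle; it is handled by the absolute convergence just noted, or equivalently by verifying the Dirichlet convolution identity $\sum_{d\mid n}\mu(d)=\delta_{n,1}$, which shows directly that $\zeta(s)\cdot\sum_n\mu(n)n^{-s}=1$.
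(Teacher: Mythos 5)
Your proof is correct in all three parts: the case analysis for multiplicativity is complete (including the degenerate case $n_1=1$, covered by the empty product convention), the third bullet is indeed immediate from (\ref{formula-mobius-function}), and the Euler-product derivation of the second bullet is sound, with absolute convergence for $\Re(s)>1$ correctly invoked to justify the expansion. There is, however, nothing in the paper to compare it against: the paper states Proposition \ref{properties-mobius-function} without any proof, treating these as classical facts about the M\"obius function. The one substantive point of contact is that the key step of your argument --- expanding $1/\zeta(s)=\prod_{p\in P}(1-p^{-s})$ into a Dirichlet series using multiplicativity, unique factorization, and absolute convergence --- is precisely the content of the Euler product lemma that the paper proves later in Appendix A (Lemma A.\ref{euler-product}). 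Applying that lemma to the multiplicative function $f(n)=\mu(n)n^{-s}$, whose sum converges absolutely for $\Re(s)>1$, gives $\sum_{1\le n\le\infty}\mu(n)n^{-s}=\prod_{p\in P}\bigl(1+\mu(p)p^{-s}+\mu(p^2)p^{-2s}+\cdots\bigr)=\prod_{p\in P}(1-p^{-s})=1/\zeta(s)$ in a single line, so you could shorten your second bullet by citing that lemma instead of re-deriving the product expansion; your alternative route via the convolution identity $\sum_{d\mid n}\mu(d)=\delta_{n,1}$ is an equally standard and complete way to finish.
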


\begin{definition}[Strong-congruent numbers]\label{strong-congruent-numbers}
We define {\em strong-congruent numbers} the integers belonging to $\mathbb{N}_{congr}$.
\end{definition}

\begin{proposition}[Strong-congruent numbers and Pythagorean triangles]\label{congruent-numbers-and-pythagorean-triangles}

$\bullet$\hskip 2pt A way to obtain strong-congruent numbers $n\in \mathbb{N}_{congr}\subset{}_{\square}\mathbb{N}$ is related to pass trough Pythagorean triangles, by using the following parametric formula:
\begin{equation}\label{formula-parametrization-pythagorean-numbers}
\left\{
\begin{array}{l}
  \left(s(\kappa^2-l^2), 2s\kappa l, s(\kappa^2+l^2)\right),\, \kappa>l>0,\,  \kappa-l=2r+1\, \forall r\in \{0\}\bigcup\mathbb{N}\\
  k,\, l\, {\rm coprime},\, \forall s\in \mathbb{N}\\
  \end{array}\right.
\end{equation}
 In fact, equations {\em(\ref{formula-parametrization-pythagorean-numbers})} parametrize all the Pythagorean triangles, hence the square-free part of their areas identify all the strong-congruent numbers.\footnote{The parametrization $(\kappa^2-l^2, 2\kappa l, \kappa^2+l^2)$  for {\em primitive} Pythagorean right triangles, ($s=1$), comes from the identity $(\kappa^2-l^2)^2+( 2\kappa l)^2=(\kappa^2+l^2)^2$, that interprets the Pythagorean's theorem for a right triangle of sides $\kappa^2-l^2$, $2\kappa l$ and hypothenuse $\kappa^2+l^2$. Pythagorean  right triangles are congruent triangles with congruent number $q=s^2(\kappa^2-l^2)\kappa l\in\mathbb{N}$. One can write $q=m^2\cdot n$, where $n$ is the square-free part of $q$. Then $[n|\frac{\kappa^2-l^2}{m},\frac{2\kappa l}{m},\frac{\kappa^2+l^2}{m}]$ is the congruent class of right triangles identified by the congruent square-free number $n$.} In Tab. \ref{congruent-numbers-pythagorean} are reported some examples of $n\in\mathbb{N}_{congr}$ built in such a way. Let us denote by $\mathbb{P}_{yth}$ the set of Pythagorean triangles parameterized by $\kappa$, $l$ and $s$:
\begin{equation}\label{set-pythagorean-triangle-k-l-s}
    \mathbb{P}_{yth}=\left\{
    \begin{array}{l}
    q\in\mathbb{N} \, |\, q=s^2(\kappa^2-l^2)\kappa\, l,\, \kappa,\, l\in\mathbb{N},\\
     \kappa>l>0,\, \kappa-l=2r+1,\, \forall r\in\{0\}\bigcup\mathbb{N},\, k,\, l,\, {\rm coprime},\, \forall s\in\mathbb{N}\\
    \end{array}
    \right\}.
\end{equation}
Then one has the exact commutative diagram {\rm(\ref{congruent-numbers-grail})}, {\em(congruent numbers grail)}, where
\begin{equation}\label{congruent-numbers-grail-a}
\left\{
\begin{array}{l}
  a(m)=\hbox{\rm square-free part of $m$}\\
  b=a|_{\mathbb{P}_{yth}}\\
  i=\hbox{\rm natural inclusion}\\
  j=\hbox{\rm natural inclusion}.
\end{array}
\right.
\end{equation}
\begin{equation}\label{congruent-numbers-grail}
\framebox{\parbox[c]{1.in}{\rm Congruent Numbers Grail}}\, \parbox[c]{2.5in}{\scalebox{0.8}{$
\xymatrix{\mathbb{Q}_{congr}\ar@/_2pc/[rddd]\ar@/_1pc/[dr]\ar@{^{(}->}[rrrr]&&&&\mathbb{Q}^{+}\ar@/^1pc/[dl]\ar@/^2pc/[lddd]\\
&\mathbb{P}_{yth}\ar[dd]^{b}\ar@/^0.5pc/[dr]\ar@{^{(}->}[rr]^{j}&&\mathbb{N}\ar[dd]^{a}\ar@/_0.5pc/[dl]&\\
&&0&&\\
&\mathbb{N}_{congr}\ar[dd]\ar@/^0.5pc/[dr]\ar@{^{(}->}[rr]^{i}&&{}_{\square}\mathbb{N}\ar[dd]\ar@/_0.5pc/[dl]&\\
&&0&&\\
&0\ar@{=}[rr]&&0&\\}$}}
\end{equation}
$\bullet$\hskip 2pt $\sharp(\mathbb{P}_{yth})=\aleph_0$. Furthermore one can write $\mathbb{P}_{yth}=\bigcup_{n\in \mathbb{N}_{congr}}(\mathbb{P}_{yth})_n$, where $(\mathbb{P}_{yth})_n=b^{-1}(n)$ is the fiber over $n$, the strong-congruent number $n$, and $\sharp((\mathbb{P}_{yth})_n)=\aleph_0$.

\end{proposition}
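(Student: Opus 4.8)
The plan is to prove the two claims of the final bullet separately, both reducing to elementary facts about the parametrization in (\ref{formula-parametrization-pythagorean-numbers}). For the cardinality statement $\sharp(\mathbb{P}_{yth})=\aleph_0$, I would first observe that $\mathbb{P}_{yth}\subset\mathbb{N}$, so it is at most countable. To see it is infinite, I would exhibit an explicit infinite family: fixing $l=1$ and letting $\kappa$ range over the even positive integers $\kappa=2,4,6,\dots$ (so that $\kappa-l=2r+1$ is odd and $\gcd(\kappa,l)=1$), the value $q=(\kappa^2-1)\kappa$ is strictly increasing in $\kappa$, hence produces infinitely many distinct elements of $\mathbb{P}_{yth}$. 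This gives $\sharp(\mathbb{P}_{yth})=\aleph_0$.

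For the fibered decomposition $\mathbb{P}_{yth}=\bigcup_{n\in\mathbb{N}_{congr}}(\mathbb{P}_{yth})_n$, I would argue as follows. By definition $b=a|_{\mathbb{P}_{yth}}$ sends each Pythagorean area $q$ to its square-free part $a(q)\in{}_{\square}\mathbb{N}$; by the footnote following (\ref{formula-parametrization-pythagorean-numbers}), that square-free part is precisely the strong-congruent number $n$ representing the congruence class of the right triangle, so $b$ takes values in $\mathbb{N}_{congr}$ and is surjective onto it (every $n\in\mathbb{N}_{congr}$ arises as the square-free part of some Pythagorean area by Proposition \ref{congruent-numbers-and-pythagorean-triangles}, first bullet). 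Setting $(\mathbb{P}_{yth})_n=b^{-1}(n)$, the fibers partition $\mathbb{P}_{yth}$ since $b$ is a well-defined function, which yields the disjoint-union expression. The inclusion $\bigcup_n (\mathbb{P}_{yth})_n\subset\mathbb{P}_{yth}$ is trivial, and the reverse follows because every $q\in\mathbb{P}_{yth}$ has $b(q)\in\mathbb{N}_{congr}$, so $q\in(\mathbb{P}_{yth})_{b(q)}$.

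It remains to show each fiber is countably infinite, $\sharp((\mathbb{P}_{yth})_n)=\aleph_0$. The upper bound $\le\aleph_0$ is again immediate from $(\mathbb{P}_{yth})_n\subset\mathbb{N}$. For infinitude, I would use the first bullet of the lemma (\ref{lemma-residue-classes-congruent-numbers}): if $n$ is a congruent number then so is $s^2 n$ for every $s$, and these scaled areas have the same square-free part $n$. Concretely, starting from one Pythagorean triangle $(\kappa_0^2-l_0^2,\,2\kappa_0 l_0,\,\kappa_0^2+l_0^2)$ whose area has square-free part $n$, the parameter $s$ in (\ref{formula-parametrization-pythagorean-numbers}) can be taken to be any element of $\mathbb{N}$, and each distinct $s$ multiplies the area by $s^2$, giving infinitely many distinct integers $q=s^2(\kappa_0^2-l_0^2)\kappa_0 l_0$ all sharing the same square-free part $n$; hence all lie in $(\mathbb{P}_{yth})_n$, which is therefore infinite.

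The main obstacle I anticipate is the surjectivity of $b$ onto $\mathbb{N}_{congr}$ together with the claim that $b$ lands in $\mathbb{N}_{congr}$ rather than in all of ${}_{\square}\mathbb{N}$: this depends on the assertion in the first bullet that (\ref{formula-parametrization-pythagorean-numbers}) parametrizes \emph{all} Pythagorean triangles and that their square-free areas exhaust $\mathbb{N}_{congr}$. Establishing that the rational right-triangle description of a congruent number can always be rescaled to an integer (primitive) Pythagorean triangle — so that no strong-congruent number is missed and no non-congruent square-free number is hit — is the delicate point; it rests on the standard Euclid parametrization of primitive Pythagorean triples, which I would invoke via the footnote identity $(\kappa^2-l^2)^2+(2\kappa l)^2=(\kappa^2+l^2)^2$ and the clearing-of-denominators argument from Lemma \ref{lemma-residue-classes-congruent-numbers}.
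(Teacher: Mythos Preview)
Your proposal is correct and follows essentially the same route as the paper: the paper also establishes $\sharp(\mathbb{P}_{yth})=\aleph_0$ by fixing $l=1$ and observing the resulting area function is strictly increasing, and obtains $\sharp((\mathbb{P}_{yth})_n)=\aleph_0$ by scaling a single Pythagorean triple over $n$ by all $l\in\mathbb{N}$. The delicate point you flagged---surjectivity of $b$ via clearing denominators---is exactly what the paper spells out: starting from a rational right triangle $[n|\tfrac{r}{s},\tfrac{r'}{s'},\tfrac{r''}{s''}]$ it multiplies all sides by $2ss'$ to produce an integer Pythagorean triple whose area $(2ss')^2 n$ has square-free part $n$, so nothing is missing from your sketch beyond writing that computation down.
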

\begin{proof}
$\bullet$\hskip 2pt In fact, $a(j(q))=a(j(s^2n))=a(s^2n)=n$ and $i(b(q))=i(b(s^2n))=i(n)=n$. The surjectivity of the mapping $b$ in (\ref{congruent-numbers-grail}) is not obvious. So let assume that $n\in\mathbb{N}_{congr}$, then there exists a congruent right triangle $[n|a,b,c]=[n|\frac{r}{s},\frac{r'}{s'},\frac{r''}{s''}]$ that satisfies the conditions: $\frac{a\cdot b}{2}=\frac{1}{2}\frac{r}{s}\frac{r'}{s'}=n$, hence $rr'=2ss'n$, and $a^2+b^2=c^2$, hence $\frac{r^2s'^2+r'^2s^2}{s^2s'^2}=\frac{r''^2}{s''^2}$. From this last we can assume $s''=ss'$ and $r''=\sqrt{r^2s'^2+r'^2s^2}=\sqrt{p^2}$, where $p\in\mathbb{N}$. Then there exists a Pythagorean triangle $[q=m^2n|\bar a,\bar b,\bar c]$, with $m\in\mathbb{N}$. More precisely one has
\begin{equation}\label{pythagorean-triangle-associated-to-strobg-congruent-one}
    \left\{
    \begin{array}{l}
      \bar a=2ss'a=2ss'\frac{r}{s}=2s'r\\
      \bar b=2ss'b=2ss'\frac{r'}{s'}=2sr'\\
      \bar c=2ss'c=2ss'\frac{r''}{s''}=2ss'\frac{\sqrt{r^2s'^2+r'^2s^2}}{ss'}=2p\\
    \end{array}
    \right.
\end{equation}
In fact one has $\bar a^2+\bar b^2=4(r^2s'^2+r'^2s^2)=4p^2=\bar c^2$, and $\frac{1}{2}\frac{\bar a}{\bar b}=2ss'rr'=(2ss')^2n=m^2n$, with $m=2ss'$. Therefore the square-free part of $\frac{1}{2}\bar a\bar b$ is just $n$. Of course, since are also Pythagorean triangles the following ones $[(lm)^2n|\hat a=l\bar a,\hat b=l\bar b,\hat c=l\bar c]$, $\forall l\in\mathbb{N}$, it follows that the mapping $b:\mathbb{P}_{yth}\to \mathbb{N}_{congr}$ is surjective.
In (\ref{congruent-numbers-grail}) we have also inserted the surjective mappings $\mathbb{Q}^{\times}\to\mathbb{N}$ and $\mathbb{Q}_{congr}\to\mathbb{P}_{yth}$. In this way it is clear that all the congruent numbers can be obtained from $\mathbb{N}_{congr}$.

$\bullet$\hskip 2pt Note that $\mathbb{P}_{yth}$ is an infinite set.\footnote{Let us emphasize that even if the set of Pythagorean triangle is infinite, since from one we can generate infinite other ones with conform transformations, this property could not appear so obvious by looking the parametrization considered. In fact two different couples $(\kappa,l)$ and $(\kappa',l')$, can have equal their $(q=(\kappa^2-l^2)\kappa l)$-value in $\mathbb{P}_{yth}$. For example to the couple $(\kappa,l)=(5,2)$ there corresponds $q=210$; the same $q$-value corresponds to the couple $(\kappa,l)=(6,1)$ too.} In fact, it contains the set $\mathbb{P}_{yth}[1]=\{q\in\mathbb{P}_{yth}\, |\, q=(\kappa^2-l^2)\kappa l,\, l=1\}$, obtained from $\mathbb{P}_{yth}$ by fixing $l=1$. One can see that $\mathbb{P}_{yth}[1]$ is identified by the positive-valued curve $q(r)=2(r+1)(4r^2+8r+3)$, having $q'(r)=24 r^2+48r+22>0$, $\forall r\ge 0$. Therefore $q(r)=q(r')$ iff $r=r'$, for any $r,\, r'\in\{0\}\bigcup\mathbb{N}$.

Furthermore, from the proof of the above point we get $\sharp((\mathbb{P}_{yth})_n)=\aleph_0$.
\end{proof}

\begin{lemma}[Cardinality of $\mathbb{N}_{congr}$]\label{cardinality-square-free-congruent-numbers}
The set of strong-congruent numbers has the same cardinality of $\mathbb{N}$: $\sharp(\mathbb{N}_{congr})=\aleph_0$.
\end{lemma}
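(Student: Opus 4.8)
The plan is to split the claim into its two halves: the inclusion $\mathbb{N}_{congr}\subseteq {}_{\square}\mathbb{N}\subseteq\mathbb{N}$ makes $\mathbb{N}_{congr}$ at most countable for free, so the entire content of the lemma is that $\mathbb{N}_{congr}$ is \emph{infinite}. I would flag at the outset that the surjection $b\colon\mathbb{P}_{yth}\to\mathbb{N}_{congr}$ of Proposition~\ref{congruent-numbers-and-pythagorean-triangles}, although it has infinite domain with $\sharp(\mathbb{P}_{yth})=\aleph_0$, does \emph{not} by itself force the image to be infinite: a countable set can surject onto a finite one, and the fibre decomposition $\mathbb{P}_{yth}=\bigcup_{n}(\mathbb{P}_{yth})_n$ with $\sharp((\mathbb{P}_{yth})_n)=\aleph_0$ is perfectly consistent with a finite index set. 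Hence the real work is to exhibit infinitely many \emph{distinct} square-free congruent numbers.

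To do this I would specialize the parametrization (\ref{formula-parametrization-pythagorean-numbers}) to $s=1$, $l=1$ and $\kappa=p+1$ for an odd prime $p$. Then $\kappa-l=p$ is odd and $\gcd(\kappa,l)=1$, so this is an admissible primitive Pythagorean triangle, with legs $\kappa^{2}-l^{2}=p(p+2)$ and $2\kappa l=2(p+1)$, hence area $q_{p}=\tfrac12\,p(p+2)\cdot 2(p+1)=p(p+1)(p+2)$. By Proposition~\ref{congruent-numbers-and-pythagorean-triangles} the square-free part $n_{p}:=a(q_{p})$ lies in $\mathbb{N}_{congr}$.

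The key observation is then a divisibility computation: since $p$ is an odd prime it divides neither $p+1$ nor $p+2$, so $p$ occurs to exactly the first power in $q_{p}=p(p+1)(p+2)$, and therefore $p\mid n_{p}$. Consequently every odd prime divides some element of $\mathbb{N}_{congr}$. I would finish with a pigeonhole argument: if $\mathbb{N}_{congr}$ were finite, then the union of the (finite) prime supports of its finitely many square-free elements would be a finite set of primes; yet this set would have to contain every odd prime $p$, contradicting Euclid's theorem on the infinitude of primes. Hence $\mathbb{N}_{congr}$ is infinite, and combined with the trivial countable upper bound this gives $\sharp(\mathbb{N}_{congr})=\aleph_{0}$.

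I expect the main obstacle to be exactly the gap flagged in the first paragraph: resisting the temptation to read infinitude off the surjection $b$, and instead controlling the \emph{square-free part} of the areas rather than the areas themselves. The prime-to-the-first-power trick is what converts ``infinitely many Pythagorean areas'' into ``infinitely many distinct square-free congruent numbers''; a family whose areas could share a common square-free part (for instance rescalings by a factor $m^{2}$) would be useless here, which is precisely why I choose a family whose areas each carry a controllably large prime factor to the first power.
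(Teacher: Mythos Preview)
Your proof is correct and follows essentially the same route as the paper: both reduce the question to showing that the square-free parts of the Pythagorean areas $(\kappa^{2}-l^{2})\kappa l$ form an infinite set, and both invoke the infinitude of primes to secure this. Your version is in fact more rigorous than the paper's own argument---the paper simply asserts that ``this last set is infinite and therefore, is so the set of its square-free parts \ldots\ from the prime factorization of integers and from the cardinality of $P$,'' whereas you supply the concrete mechanism (the subfamily $\kappa=p+1$, $l=1$ and the observation that $p\parallel q_{p}$) and you correctly flag the gap that the surjection $b$ alone cannot bridge.
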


\begin{proof}
From Proposition \ref{congruent-numbers-and-pythagorean-triangles} we see that the subset $\mathbb{N}_{congr}$ of $\mathbb{N}$ contains an infinite set. This is identified by means of the square-free parts of the numbers $(\kappa^2-l^2)\kappa l$. This last set is infinite and therefore, is so the set of its square-free parts. This follows from the prime factorization of integers and from the cardinality of $P$, i.e., the set of primes: $\sharp(P)=\sharp(\mathbb{N})=\aleph_0$.\footnote{In Appendix A it is given a proof on the cardinality of $P$ that uses the zeta Riemann function. (See Theorem A1.)} Therefore it must necessarily be $\sharp(\mathbb{N}_{congr})=\aleph_0$.
\end{proof}

\begin{theorem}[First criterion for strong-congruent numbers]\label{first-criterion-for-strong-congruent-numbers}
Let $n\in{}_{\square}\mathbb{N}$ be a square-free integer. Then $n\in\mathbb{N}_{congr}$, namely $n$ is a strong-congruent number, iff there are integers $\kappa$, $l$, $\kappa>l>0$, $\kappa-l\not\cong {\rm mod}\, 2$ such that: $(\kappa^2-l^2)\kappa l=m^2\cdot n$, for some $m\in\mathbb{N}$.
\end{theorem}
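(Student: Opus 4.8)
The plan is to prove the two implications separately, leaning on the results already established in Proposition \ref{congruent-numbers-and-pythagorean-triangles} and Lemma \ref{lemma-residue-classes-congruent-numbers}. Throughout I read the condition ``$\kappa-l\not\cong{\rm mod}\, 2$'' as $\kappa-l$ odd, i.e. the classical primitivity condition for the Pythagorean parametrization recalled in the footnote to {\rm(\ref{formula-parametrization-pythagorean-numbers})}, with $\kappa,l$ coprime.

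For the easy implication ($\Leftarrow$), suppose $\kappa>l>0$ with $\kappa-l$ odd satisfy $(\kappa^2-l^2)\kappa l=m^2 n$ for some $m\in\mathbb{N}$. First I would exhibit the integer right triangle with legs $\kappa^2-l^2$ and $2\kappa l$ and hypotenuse $\kappa^2+l^2$, whose right-angle property is the identity $(\kappa^2-l^2)^2+(2\kappa l)^2=(\kappa^2+l^2)^2$. Its area equals $\tfrac{1}{2}(\kappa^2-l^2)(2\kappa l)=(\kappa^2-l^2)\kappa l=m^2 n$, so $m^2 n$ is a congruent number. Applying the first bullet of Lemma \ref{lemma-residue-classes-congruent-numbers} with the rational factor $s=1/m\in\mathbb{Q}$ shows that $n=(1/m)^2(m^2 n)$ is again a congruent number; since $n$ is square-free, $n\in\mathbb{N}_{congr}$.

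For the harder implication ($\Rightarrow$), I would invoke the surjectivity of $b\colon\mathbb{P}_{yth}\to\mathbb{N}_{congr}$ proved in Proposition \ref{congruent-numbers-and-pythagorean-triangles}. Given $n\in\mathbb{N}_{congr}$, this surjectivity produces a Pythagorean triangle in $\mathbb{P}_{yth}$, parametrized by coprime $\kappa>l>0$ with $\kappa-l$ odd and a scaling $s\in\mathbb{N}$, whose area $q=s^2(\kappa^2-l^2)\kappa l$ has square-free part $n$. The key elementary observation is that multiplication by the perfect square $s^2$ does not change the square-free part: if $(\kappa^2-l^2)\kappa l=m_0^2 N$ with $N$ square-free, then $q=(s\,m_0)^2 N$, forcing $N=n$. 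Hence $(\kappa^2-l^2)\kappa l=m^2 n$ with $m=m_0\in\mathbb{N}$, and the integers $\kappa,l$ meet all the requirements of the statement.

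The main obstacle is entirely contained in the ($\Rightarrow$) direction, and specifically in the passage from a merely rational right triangle realizing $n$ to an integer Pythagorean triangle whose area has square-free part exactly $n$. This denominator-clearing step is precisely what is carried out in {\rm(\ref{pythagorean-triangle-associated-to-strobg-congruent-one})} inside the proof of Proposition \ref{congruent-numbers-and-pythagorean-triangles}; once that surjectivity is in hand, the remaining reasoning is the square-free-part bookkeeping above, which is routine. I would therefore organize the write-up so that the theorem appears as a clean repackaging of Proposition \ref{congruent-numbers-and-pythagorean-triangles} together with the square-closure of $\mathbb{Q}_{congr}$ from Lemma \ref{lemma-residue-classes-congruent-numbers}.
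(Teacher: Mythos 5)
Your proposal is correct and follows essentially the same route as the paper: the paper's own proof is a one-line appeal to the congruent numbers grail (\ref{congruent-numbers-grail}) and the surjectivity of $b\colon\mathbb{P}_{yth}\to\mathbb{N}_{congr}$ from Proposition \ref{congruent-numbers-and-pythagorean-triangles}, which is exactly the ingredient you use for ($\Rightarrow$), while your ($\Leftarrow$) direction via the square-scaling in Lemma \ref{lemma-residue-classes-congruent-numbers} is the content implicitly encoded in the grail diagram. You have simply written out the bookkeeping that the paper leaves tacit.
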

\begin{proof}
This criterion follows from the congruent numbers grail (\ref{congruent-numbers-grail}) and the surjectivity of the mapping $b$ there considered.
\end{proof}

\begin{theorem}[Second criterion for strong-congruent numbers]\label{second-criterion-for-strong-congruent-numbers}
A number $n\in{}_{\square}\mathbb{N}$ is a strong-congruent number, namely $n\in\mathbb{N}_{congr}$, iff there exist three positive rational numbers $0<r<s<t$, such that the following conditions are satisfied:

{\rm(i)} $t^2-r^2=2n$;

{\rm(ii)} $t^2+r^2=2s^2$.
\end{theorem}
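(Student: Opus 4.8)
The plan is to set up an explicit, mutually inverse correspondence between rational right triangles of area $n$ and triples $(r,s,t)$ of positive rationals obeying (i) and (ii); this reduces the claimed equivalence directly to Definition \ref{congruent-number}. The algebraic identity driving everything is that conditions (i) and (ii), when added and subtracted, are equivalent to
\begin{equation}
t^2=s^2+n,\qquad r^2=s^2-n,
\end{equation}
i.e. to the statement that $s^2-n,\, s^2,\, s^2+n$ is an arithmetic progression of three rational squares with common difference $n$ --- the classical shadow of a rational right triangle of area $n$.

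For the forward implication I would take $n\in\mathbb{N}_{congr}$ together with a representing triangle $[n|a,b,c]$, so that $a^2+b^2=c^2$ and $ab/2=n$, and then set $t=(a+b)/2$, $r=|a-b|/2$, $s=c/2$. Expanding $(a\pm b)^2=c^2\pm 4n$ gives exactly $t^2=s^2+n$ and $r^2=s^2-n$, hence (i) and (ii); and the chain $r^2=s^2-n<s^2<s^2+n=t^2$, together with positivity, yields $0<r<s<t$.

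For the converse I would start from $0<r<s<t$ satisfying (i) and (ii) and invert the substitution, putting $a=t+r$, $b=t-r$, $c=2s$. Then $b>0$ since $t>r$, while $a^2+b^2=2t^2+2r^2=4s^2=c^2$ and $ab/2=(t^2-r^2)/2=n$ show that $(a,b,c)$ is a rational right triangle of area $n$. Hence $n$ is a congruent number, and since it is square-free by hypothesis, $n\in\mathbb{N}_{congr}$.

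The single delicate point --- and the place I expect the only real obstacle --- is verifying the strict inequality $0<r$ in the forward direction, which amounts to excluding a degenerate isosceles triangle with $a=b$. Such a triangle would force $a=\sqrt{2n}\in\mathbb{Q}$, so that $2n$ would be a perfect square; for square-free $n$ this can happen only when $n=2$, which is not a congruent number (cf. the remarks accompanying Lemma \ref{lemma-residue-classes-congruent-numbers}). Since membership $n\in\mathbb{N}_{congr}$ already excludes $n=2$, no representing triangle can be isosceles, so $r>0$ always holds and the two substitutions are genuinely inverse, completing the equivalence.
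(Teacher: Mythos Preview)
Your proof is correct and follows essentially the same route as the paper: the paper also sets $r=\frac{a-b}{2}$, $t=\frac{a+b}{2}$, $s=\frac{c}{2}$ in one direction and $a=t+r$, $b=t-r$, $c=2s$ in the other. Your treatment is in fact more careful than the paper's, which does not explicitly justify $r>0$ (nor fix the sign by taking $|a-b|$); your handling of the degenerate isosceles case via the square-freeness of $n$ and the non-congruence of $2$ is a genuine improvement in rigor.
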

\begin{proof}
If $n$ is a strong-congruent number, let $[n|a,b,c]$ be its right triangle of area $n$. Set $r=\frac{a-b}{2}$, $t=\frac{a+b}{2}$. Then we get $t^2-r^2=ab=2n$ and $t^2+r^2=\frac{a^2+b^2}{2}=\frac{c^2}{2}$, hence $s=\frac{c}{2}$. Vice versa, if there exist three positive rational numbers $0<r<s<t$, satisfying above conditions (i) and (ii), then we can identify a congruent right triangle $[n|a,b,c]$, with $a=r+t$, $b=t-r$, $c=2s$. In fact, one has $a^2+b^2=4(r^2+t^2)=2s^2=c^2$ and $ab=r^2-t^2=2n$.
\end{proof}
\begin{remark}\label{warning-criterion}
Theorem \ref{first-criterion-for-strong-congruent-numbers} and Thoerem \ref{second-criterion-for-strong-congruent-numbers} do not give a way to always obtain, after a finite number of steps, an answer. In fact, the set $\mathbb{P}_{yth}$ is infinite. For example if $n=5$, after a number of step we can arrive to know that with $\kappa=5$ and $l=4$, one has $(\kappa^2-l^2)\kappa l=m^2\cdot 5$. (See Tab. \ref{congruent-numbers-pythagorean}.) Really we get $9\cdot 5\cdot 4=m^2\cdot 5$, hence $m^2=36=2^2\cdot 3^2=6^2$, namely $m=6$.This means that the strong-congruent right triangle is $[5|\frac{\kappa^2-l^2}{6},\frac{2\kappa l}{6},\frac{\kappa^2+l^2}{6}]=[5|\frac{3}{2},\frac{20}{3},\frac{41}{6}]$, that has just area $5$. But whether $n$ is not a strong-congruent number this process cannot stop ! Therefore Theorem \ref{first-criterion-for-strong-congruent-numbers} does not solve the problem to find an useful algorithm to decide whether a square-free integer is a strong-congruent number. However, since the mapping $b:\mathbb{P}_{yth}\to\mathbb{N}_{congr}$ is surjective, all the possible strong-congruent numbers can be obtained as square-free part of numbers $q\in \mathbb{P}_{yth}$.
\end{remark}

\begin{table}[t]
\caption{Examples of strong-congruent numbers $n\in\mathbb{N}_{congr}$ and triangle class $[n|\frac{a}{m},\frac{b}{m},\frac{c}{m}]$, built from Pythagorean triangles $(\kappa^2-l^2, 2\kappa l, \kappa^2+l^2)$.}
\label{congruent-numbers-pythagorean}
\begin{tabular}{|l|l|l|l|l|}
\hline
{\rm{\footnotesize $\kappa$}}&{\rm{\footnotesize $l$}}&{\rm{\footnotesize Pythagorean-triangle $(a,b,c)$}}&{\rm{\footnotesize Pythagorean-triangle area}}&{\rm{\footnotesize $[n|\frac{a}{m},\frac{b}{m},\frac{c}{m}]$}}\hfill\\
\hline\hline
{\rm{\footnotesize $2$}}&{\rm{\footnotesize $1$}}&{\rm{\footnotesize $(3,4,5)$}}&{\rm{\footnotesize  $6=2\cdot 3$}}&{\rm{\footnotesize $[6|3,4,5]$}}\hfill\\
\hline
{\rm{\footnotesize $3$}}&{\rm{\footnotesize $2$}}&{\rm{\footnotesize $(5,12,13)$}}&{\rm{\footnotesize $30=2\cdot 3\cdot 5$}}&{\rm{\footnotesize $[30|5,12,13]$}}\hfill\\
\hline
{\rm{\footnotesize $4$}}&{\rm{\footnotesize $1$}}&{\rm{\footnotesize $(15,8,17)$}}&{\rm{\footnotesize $60=2^2\cdot 3\cdot 5$}}&{\rm{\footnotesize $[15|\frac{15}{2},4,\frac{17}{2}]$}}\hfill\\
\hline
{\rm{\footnotesize $4$}}&{\rm{\footnotesize $3$}}&{\rm{\footnotesize $(7,24,25)$}}&{\rm{\footnotesize  $84=2^2\cdot 3\cdot 7$}}&{\rm{\footnotesize $[21|\frac{7}{2},12,\frac{25}{2}]$}}\hfill\\
\hline
{\rm{\footnotesize $5$}}&{\rm{\footnotesize $2$}}&{\rm{\footnotesize $(21,20,29)$}}&{\rm{\footnotesize $210=2\cdot 3\cdot 5\cdot 7$}}&{\rm{\footnotesize $[210|21,20,29]$}}\hfill\\
\hline
{\rm{\footnotesize $5$}}&{\rm{\footnotesize $4$}}&{\rm{\footnotesize $(9,40,41)$}}&{\rm{\footnotesize $180=2^2\cdot 3^2\cdot 5$}}&{\rm{\footnotesize $[5|\frac{3}{2},\frac{20}{3},\frac{41}{6}]$}}\hfill\\
\hline
{\rm{\footnotesize $6$}}&{\rm{\footnotesize $1$}}&{\rm{\footnotesize $(35,12,37)$}}&{\rm{\footnotesize $210=2\cdot 3\cdot 5\cdot 7$}}&{\rm{\footnotesize $[210|35,12,37]$}}\hfill\\
\hline
{\rm{\footnotesize $6$}}&{\rm{\footnotesize $5$}}&{\rm{\footnotesize $(11,60,61)$}}&{\rm{\footnotesize $330=2\cdot 3\cdot 5\cdot 11$}}&{\rm{\footnotesize $[330|11,60,61]$}}\hfill\\
\hline
{\rm{\footnotesize $7$}}&{\rm{\footnotesize $2$}}&{\rm{\footnotesize $(45,28,53)$}}&{\rm{\footnotesize  $630=2\cdot 3^2\cdot 5\cdot 7$}}&{\rm{\footnotesize $[70|15,\frac{28}{3},\frac{53}{3}]$}}\hfill\\
\hline
{\rm{\footnotesize $7$}}&{\rm{\footnotesize $4$}}&{\rm{\footnotesize $(33,56,65)$}}&{\rm{\footnotesize  $924=2^2\cdot 3\cdot 7\cdot 11$}}&{\rm{\footnotesize $[231|\frac{33}{2},28,\frac{65}{2}]$}}\hfill\\
\hline
{\rm{\footnotesize $8$}}&{\rm{\footnotesize $1$}}&{\rm{\footnotesize $(63,16,65)$}}&{\rm{\footnotesize  $504=2^2\cdot 3^2\cdot 2\cdot 7$}}&{\rm{\footnotesize $[14|\frac{21}{2},\frac{8}{3},\frac{65}{6}]$}}\hfill\\
\hline
{\rm{\footnotesize $8$}}&{\rm{\footnotesize $3$}}&{\rm{\footnotesize $(55,48,33)$}}&{\rm{\footnotesize  $1320=2^2\cdot 2\cdot 3\cdot 5\cdot 11$}}&{\rm{\footnotesize $[330|\frac{55}{2},24,\frac{73}{2}]$}}\hfill\\
\hline
{\rm{\footnotesize $8$}}&{\rm{\footnotesize $7$}}&{\rm{\footnotesize $(15,112,113)$}}&{\rm{\footnotesize  $840=2^2\cdot 2\cdot 3\cdot 5\cdot 7$}}&{\rm{\footnotesize $[310|\frac{15}{2},56,\frac{113}{2}]$}}\hfill\\
\hline
\multicolumn{5}{l}{\rm{\footnotesize $(\kappa^2-l^2, 2\kappa l, \kappa^2+l^2)$, $\kappa>l>0$, $\kappa\not\equiv l\, {\rm mod}\, 2$, (namely $\kappa-l=2r+1,\, r\ge 0$).}}\hfill\\
\multicolumn{5}{l}{\rm{\footnotesize $n=\frac{a\cdot b}{2m^2}\in \mathbb{N}_{congr}$, with $n$ the Pythagorean-triangle area square-free part.}}\hfill\\
\multicolumn{5}{l}{\rm{\footnotesize With respect to the formula (\ref{set-pythagorean-triangle-k-l-s}) one has taken $s=1$,}}\hfill\\
\multicolumn{5}{l}{\rm{\footnotesize namely one considers primitive Pytagorean triangles only.}}\hfill\\
\end{tabular}
\end{table}

\begin{lemma}[Congruent numbers as rational points of elliptic curves]\label{lemma-congruent-numbers-as-elliptic}

$\bullet$\hskip 2pt A positive rational number $q\in\mathbb{Q}$ is congruent iff the equation $y^2=x^3-q^2\cdot x$ in the plane $\mathbb{R}^2$, has a {\em rational point}, $P$, i.e., a point that has rational coordinates, $P=(x_P,y_P)\in\mathbb{Q}^2$, with $y_P\not=0$. This justifies the definition of congruent numbers as {\em rational elliptic points}.

$\bullet$\hskip 2pt In particular, if $[q|a,b,c]$ is a congruent right triangle class, $q\in\mathbb{Q}_{congr}$, one has that $P=(x_P,y_P)=(\frac{qb}{c-a},\frac{2q^2}{c-a})$ is a rational point on $y^2=x^3-q^2 x$. Vice versa, if $P=(x_P,y_P)$, $y_P\not=0$, is a rational point on $y^2=x^3-q^2x$, then the congruent right triangle class $[q|a,b,c]$ has $(a,b,c)=(\frac{x^2-q^2}{y},\frac{2qx}{y},\frac{x^2+q^2}{y})$.
\end{lemma}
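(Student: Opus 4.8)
The plan is to prove both directions of the equivalence by exhibiting the explicit rational point (respectively triangle) named in the second bullet and verifying the defining relations by direct substitution; the second bullet is therefore not a separate claim but the engine of the proof. Throughout I write $E_q : y^2 = x^3 - q^2 x = x(x-q)(x+q)$, and I keep in mind that the three points with $y=0$, namely $(0,0)$ and $(\pm q,0)$, lie on $E_q$ for \emph{every} $q$ and are exactly those excluded by the hypothesis $y_P\neq 0$. They correspond to degenerate triangles, which is the structural reason the condition $y_P\neq 0$ must be imposed: it rules out the trivial $2$-torsion that carries no information about congruence.

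First I would treat the implication ``$q$ congruent $\Rightarrow$ rational point''. Starting from a right triangle $[q|a,b,c]$ with $a,b,c\in\mathbb{Q}_{>0}$, $a^2+b^2=c^2$ and $ab=2q$, I set $x_P=\frac{qb}{c-a}$ and $y_P=\frac{2q^2}{c-a}$; these are well defined since $b\neq 0$ forces $c>a$, so $c-a\neq 0$. To verify $P\in E_q$ I compute $x_P^2-q^2=q^2\,\frac{b^2-(c-a)^2}{(c-a)^2}$ and reduce the numerator using $c^2=a^2+b^2$ to $b^2-(c-a)^2=2a(c-a)$, whence $x_P(x_P^2-q^2)=\frac{2q^3ab}{(c-a)^2}=\frac{4q^4}{(c-a)^2}=y_P^2$, the last equality using $ab=2q$. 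Since $q\neq 0$ we also have $y_P\neq 0$, as required.

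Next I would treat the reverse implication. Given $P=(x,y)\in E_q\cap\mathbb{Q}^2$ with $y\neq 0$, I set $a=\frac{x^2-q^2}{y}$, $b=\frac{2qx}{y}$, $c=\frac{x^2+q^2}{y}$. A one-line algebraic identity gives $a^2+b^2=\frac{(x^2-q^2)^2+4q^2x^2}{y^2}=\frac{(x^2+q^2)^2}{y^2}=c^2$, so the triangle is right. For the area I rewrite the curve equation as $x(x^2-q^2)=y^2$, which yields $\frac{ab}{2}=\frac{qx(x^2-q^2)}{y^2}=q$. Thus the triple has the correct area and satisfies Pythagoras.

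Finally, the only point that needs care — and the closest thing to an obstacle — is passing from these \emph{signed} rationals to an honest triangle with positive sides. Here $\frac{ab}{2}=q>0$ forces $a$ and $b$ to share a sign, and since $(x,-y)$ also lies on $E_q$ I may replace $P$ by it if necessary to arrange $y>0$, so that $c>0$; then $|a|,|b|,c$ form a genuine right triangle of area $q$, exhibiting $q$ as congruent. This sign bookkeeping, together with the (optional) check that the two explicit assignments are mutually inverse on the relevant loci — which follows by back-substitution and is not needed for the bare equivalence — completes the argument.
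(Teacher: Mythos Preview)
Your proof is correct and follows essentially the same approach as the paper's: exhibit the explicit rational maps in both directions and verify the defining relations by direct substitution. The only cosmetic differences are that the paper writes the forward map in the equivalent form $x=q\frac{a+c}{b}$, $y=2q^2\frac{a+c}{b^2}$ (which coincides with your $\frac{qb}{c-a}$, $\frac{2q^2}{c-a}$ via $(c-a)(c+a)=b^2$), explicitly checks that the two maps are mutual inverses, and appends a geometric discussion of the circle--hyperbola intersection $a^2+b^2=c^2$, $ab=2q$ in $\mathbb{R}^2$ that is not needed for the bare lemma; your treatment of the sign issue is in fact slightly more careful than the paper's.
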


\begin{proof}
This can be directly proved by considering that the set
\begin{equation}\label{set-a-n}
A[q]=\{(a,b,c)\in\mathbb{R}^3\, |\, a^2+b^2=c^2,\, a\cdot b=2q,\, q\in\mathbb{R}\},
\end{equation}
is in correspondence one-to-one with the set

\begin{equation}\label{set-e-n}
E[q]=\{(x,y)\in\mathbb{R}^2\, |\, y^2=x^3-q^2 x,\, y\not= 0,\, q\in\mathbb{R}\}.
\end{equation}

This is realized by the following explicit expressions of $(x,y)$ by means of $(a,b,c)$ and $q\in\mathbb{R}$:
\begin{equation}\label{relation-congruent-right-triangle-point}
x=q\frac{a+c}{b},\,  y=2q^2\frac{a+c}{b^2}
\end{equation}
and the converse expressions:
 \begin{equation}\label{relation-point-congruent-right-triangle}
 a=\frac{x^2-q^2}{y},\, b=2q\frac{x}{y},\, c=\frac{x^2+q^2}{y}.
\end{equation}
In fact, one can directly verify that the point $(x,y)\in\mathbb{R}^2$, given in (\ref{relation-congruent-right-triangle-point}), satisfies equation $y^2-x^3+q^2 x=0$, when $(a,b,c)\in A[q]$. Furthermore, by using expressions (\ref{relation-point-congruent-right-triangle}) for $a\, b,\, c$ in (\ref{relation-congruent-right-triangle-point}) one has the identity $x=x$ and $y=y$, that proves that the map $E[q]\to A[q]$ given in (\ref{relation-point-congruent-right-triangle}) is just the inverse of the one $A[q]\to E[q]$ given in (\ref{relation-congruent-right-triangle-point}).

Furthermore, $a$, $b$ and $c$ are positive iff $x$ and $y$ are positive. By restriction on $\mathbb{Q}\subset\mathbb{R}$, we get that a positive rational number $q\in\mathbb{Q}$ is congruent iff the equation $y^2=x^3-q^2 x$ has a rational point with $y\not=0$. Let us also underline that equation (\ref{real-elliptic-system}) seen in the field of real numbers, namely with $a,\, b,\, c\in\mathbb{R}$, admits always solutions under the condition $c\ge 2\sqrt{n}$, for any $q\in\mathbb{R}$.
\begin{equation}\label{real-elliptic-system}
  \left\{
  \begin{array}{ll}
   (\Gamma)&a^2+b^2-c^2=0\\
   (\Upsilon)& ab-2q=0\hskip 5pt.\\
  \end{array}
  \right.
\end{equation}
In fact the first equation in (\ref{real-elliptic-system}) represents in the plane $\mathbb{R}^2$, $(a,b)$, a circle $\Gamma$ of center $O=(0,0)$ and radius $c$. The second equation in (\ref{real-elliptic-system}) represents an equilateral hyperbola $\Upsilon$ of center $O$ and vertex $V=(\sqrt{2q},\sqrt{2q})$. Thus $\Gamma\bigcap\Upsilon\not=\varnothing$ iff $c\ge \overline{OV}=2\sqrt{q}$, for any fixed integer $q\in\mathbb{R}$. Therefore, taking into account the above considerations, we conclude that for any positive rational $q$ the set of solutions of (\ref{real-elliptic-system}) for the field $\mathbb{R}$, is an elliptic curve $E[q]$ of equation $y^2-x^3+q^2x=0$, in the plane $\mathbb{R}^2$, $(x,y)$.\footnote{It is useful to emphasize that in $\mathbb{R}^3$, where $(a,b,c)$ represents a point, equations $(\Gamma)$ and $(\Upsilon)$ in (\ref{real-elliptic-system}), identify surfaces, that we just denote with the symbols $\Gamma$ and $\Upsilon$, respectively. Therefore $E[q]=\Gamma\bigcap\Upsilon$, namely $E[q]$ is the curve intersection of such surfaces. From above calculations we know that such a curve belongs to a plane $\pi\subset \mathbb{R}^3$, where with respect to a suitable coordinate system $(x,y)$, it is represented by the equation $y^2-x^3+q^2x=0$. By considering the natural inclusions $\mathbb{R}^2\subset\mathbb{R}^2\bigcup\{\infty\}= S^2$,  $\mathbb{R}^3\subset\mathbb{R}^3\bigcup\{\infty\}= S^3=G^+_{1,4}(\mathbb{R}^4)\cong \mathbb{P}^3(\mathbb{R})$, where $G^+_{1,4}(\mathbb{R}^4)$ is the oriented Grassmann manifold of oriented $1$-dimensional subspaces of $\mathbb{R}^4$ that pass through the origin $O\in\mathbb{R}^4$, we obtain a more satisfactory representation of $E[q]$. ( See Fig. \ref{representation-elliptic-curve} and Fig. \ref{representation-compactified-elliptic-curve} for some pictures showing that such curves are represented by two connected components). (Recall that $G^+_{1,4}(\mathbb{R}^4)$ is an analytic manifold of dimension $3$ with $\pi_1(G^+_{1,4}(\mathbb{R}^4))=\mathbb{Z}_2$ and having the cell decomposition: $\infty=\mathbb{R}^0\subset\mathbb{R}\subset\mathbb{R}^2\subset\mathbb{R}^3$.)
Note that in $S^3$ the intersection curve $\Sigma^1=\Gamma\bigcap\Upsilon$ is not contained in an affine plane, but is contained in $S^2\subset S^3$. Then this can be projected by means of the stereographic projection, of $S^2$, from $\infty$, say the north pole, to the plane $\pi$, tangent $S^2$ at the south pole $O$. In such a way $\Sigma^1$ identifies a plane curve $E[n]$ on $\pi$. (See Fig. \ref{representation-compactified-elliptic-curve}(B) and Fig. \ref{representation-compactified-elliptic-curve}(C).)}  To find solutions of (\ref{real-elliptic-system}) in the sub-field $\mathbb{Q}\subset\mathbb{R}$, it is enough to find rational points of $E[q]$. In fact a rational solution of (\ref{real-elliptic-system}) is in correspondence one-to-one with the rational points of $E[q]$,  as can be directly verified by using the above transformation $(a,b,c)\leftrightarrow(x,y)$.
\end{proof}

\begin{lemma}[Congruent numbers classes as set of rational points on the same class of elliptic curves]\label{lemma-congruence-equivalent-elliptic-curve}
If $f:\mathbb{R}^2\to\mathbb{R}^2$ is a rational-conform transformation of $\mathbb{R}^2$, sending a congruent rational right triangle $[q|a,b,c]$ into a conform one $[\bar q=s^2q|\bar a=sa,\bar b=sb,\bar c=sc]$, the corresponding elliptic curves $y^2=x^3-q^2 x$ and $\bar y^2=\bar x^3-\bar q^2 \bar x$ are not related by the same conform transformation $f$, but by a different diffeomorphism $\varphi:\mathbb{R}^2\to\mathbb{R}^2$, $\varphi\not=f$, of $\mathbb{R}^2$. As a by-product even if equivalent congruent right triangle are conform each other, the corresponding elliptic curves are not conform each other, but diffeomorphic only. We can identify two elliptic curves $E[q]$ and $E[\bar q]$, $\bar q=s^2 q$, by means of the induced diffeomorphism $\varphi:E[q]\to E[\bar q]$.
\end{lemma}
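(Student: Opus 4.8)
The plan is to make explicit the transformation that the rational-conform map $f$ induces at the level of the associated elliptic curves, and then to read off directly that it is a diffeomorphism which fails to be a similarity. First I would invoke the explicit bijection of Lemma \ref{lemma-congruent-numbers-as-elliptic} between the triangle data and the curve points, namely $x=q\frac{a+c}{b}$ and $y=2q^2\frac{a+c}{b^2}$, together with its inverse. Applying the \emph{same} formulas to the conform triangle data $\bar a=sa$, $\bar b=sb$, $\bar c=sc$ and to the rescaled congruent number $\bar q=s^2q$ (now read on $E[\bar q]$) produces the coordinates of the point on $E[\bar q]$ that corresponds to $(\bar a,\bar b,\bar c)$, and hence the induced map $\varphi$.

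The key computation gives
\begin{equation}
\bar x=\bar q\,\frac{\bar a+\bar c}{\bar b}=s^2 q\,\frac{s(a+c)}{sb}=s^2 x,\qquad \bar y=2\bar q^2\,\frac{\bar a+\bar c}{\bar b^2}=2s^4q^2\,\frac{s(a+c)}{s^2b^2}=s^3 y,
\end{equation}
so that the transformation induced on the plane $\mathbb{R}^2$, $(x,y)$, is $\varphi(x,y)=(s^2 x,\,s^3 y)$. I would then check that $\varphi$ actually carries $E[q]$ onto $E[\bar q]$: if $y^2=x^3-q^2x$, then $\bar y^2=s^6y^2=s^6(x^3-q^2x)=s^6x^3-s^6q^2x=\bar x^3-\bar q^2\bar x$, where I use $\bar q^2\bar x=s^4q^2\cdot s^2x=s^6q^2x$. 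Since $s\neq 0$, the Jacobian of $\varphi$ is the constant diagonal matrix $\mathrm{diag}(s^2,s^3)$, of determinant $s^5\neq 0$, so $\varphi$ is a diffeomorphism of $\mathbb{R}^2$ restricting to a diffeomorphism $\varphi:E[q]\to E[\bar q]$; this furnishes exactly the asserted identification.

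Finally I would contrast $\varphi$ with $f$. The rational-conform map acting on the triangle data is a similarity of ratio $s$, which multiplies every length by the common factor $s$ and is therefore conform. By contrast $\varphi$ scales the two coordinate axes by the \emph{distinct} factors $s^2$ and $s^3$, so for $s\neq 1$ it is not a similarity; in particular $\varphi\neq f$, and the two curves are diffeomorphic but not conform. I expect the only point requiring genuine care to be the bookkeeping of the exponents of $s$: the quadratic scaling $\bar q=s^2q$ of the congruent number is precisely what forces the weight-$(2,3)$ scaling $(x,y)\mapsto(s^2x,s^3y)$ on the curve, and it is the mismatch between the weights $2$ and $3$ — rather than a single common weight — that accounts for the loss of conformality while preserving the diffeomorphism type.
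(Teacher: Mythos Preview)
Your proof is correct and follows essentially the same route as the paper: both compute the induced map via the triangle-to-curve formulas of Lemma~\ref{lemma-congruent-numbers-as-elliptic}, obtain $\varphi(x,y)=(s^2x,s^3y)$, check it carries $E[q]$ to $E[\bar q]$ with Jacobian determinant $s^5\neq 0$, and observe that the unequal scaling factors $s^2\neq s^3$ prevent $\varphi$ from being conform. Your exposition is slightly more explicit about why the weight-$(2,3)$ scaling arises and why it breaks conformality, but the argument is the same.
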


\begin{proof}
In fact the conform transformation $f$ is given by $f:(x,y)\mapsto (\bar x=sx,\bar y=sy)$. This transformation has the effect to reduce or augment any right triangle into another one having the same angles, hence with sides that are parallel to the original ones. Instead the transformation  $\varphi:\mathbb{R}^2\to\mathbb{R}^2$ relating the two elliptic curves, is the following: $\varphi:(x,y)\mapsto(\bar x=s^2 x,\bar y=s^3 y)$. This can be seen by a direct calculation. In fact, $\bar x=\bar n\frac{(\bar a+\bar c)}{\bar b}=n s^2\frac{a+c}{b}=s^2x$ and $\bar y=2\bar n^2\frac{\bar a+\bar c}{\bar b^2}=2n^2s^3\frac{a+c}{b^2}=s^3y$.
Thus $\varphi$ is not a conform transformation of $\mathbb{R}^2$, $(x,y)$ ! This transformation identifies a diffeomorphism between $E[q]$ and $E[\bar q]$, with jacobian $j(\varphi)=s^5\not=0$. In fact if we substitute $(\bar x=s^2x,\bar y=s^3 y)$ in the curve $\bar y^2-\bar x^3+\bar q^2\bar x=0$ we get $s^6(y^2-x^3+q^2x=0)$. This means that the transformed point $(x,y)$ belongs to the original elliptic curve of equation $y^2-x^3+q^2x=0$. In particular if $P=(x_P,y_P)$ is a rational point of such an elliptic curve, then also its transformed $\bar P=(\bar x_P,\bar y_P)$ belongs to the transformed elliptic curve. In other words, to any conform transformation $f:\mathbb{R}^2\to\mathbb{R}^2$, corresponds a transformation of elliptic curves: $\varphi:E[q]\to E[\bar q]$. In this sense we can identify all the elliptic curves $E[\bar q]$, with the unique one $E[n]$, $n\in\mathbb{N}_{congr}$, if $\bar q=s^2 n$, where $s\in\mathbb{Q}$.
\end{proof}
\begin{lemma}[Rational points identified by a congruent right triangle]\label{rational-points-identified-by-a-congruent-right-triangle}
If $[q|a,b,c]$ is a congruent right triangle and $P=(x,y)$ is the corresponding rational point on the elliptic curve $E[q]:\, y^2=x^3-q^2x$, identified by {\em(\ref{relation-congruent-right-triangle-point})}, then we can identify also six other rational points on $E[q]$ by means of Tab. \ref{table-rational-points-identified-by-a-congruent-right-triangle}. These points are  intersection with $E[q]$ of the three straight-lines passing for $P$ and $(-q,0)$, $(0,0)$, $(q,0)$ respectively and their reflections with respect to the $x$-axis.
\end{lemma}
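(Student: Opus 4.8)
The plan is to exploit two rational structures carried by $E[q]$: its three rational $2$-torsion points and its reflection symmetry $y\mapsto -y$. First I would rewrite the defining equation in factored form $y^2=x^3-q^2x=x(x-q)(x+q)$, which exhibits the three points $T_1=(-q,0)$, $T_2=(0,0)$, $T_3=(q,0)$ as rational points lying on $E[q]$; they are exactly the points of $E[q]$ with $y=0$. Since $P=(x,y)$ and each $T_i$ are rational, the secant line through $P$ and $T_i$ has rational slope and rational intercept, so substituting its equation into the cubic yields a monic polynomial of degree three in the abscissa with rational coefficients.

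The key arithmetic step is the chord construction. The cubic obtained by intersecting the line through $P$ and $T_i$ with $E[q]$ already possesses two known rational roots, namely $x_P$ and the abscissa of $T_i$; by Vieta's formulas the sum of its three roots equals a rational number (the negative of the coefficient of the quadratic term), so the third root is rational, and the associated ordinate is rational because it is obtained by substituting into the rational line equation. This produces three new rational points $R_1,R_2,R_3$, one for each $T_i$. I would then record their explicit coordinates; the computation for $T_2=(0,0)$, for instance, gives third abscissa $-q^2/x_P$ and ordinate $-q^2 y_P/x_P^2$, and the analogous products-of-roots formulas for $T_1$ and $T_3$ supply the remaining entries, which is precisely the content of Table~\ref{table-rational-points-identified-by-a-congruent-right-triangle}.

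Finally I would invoke the reflection symmetry: because the equation involves $y$ only through $y^2$, the map $(x,y)\mapsto(x,-y)$ carries $E[q]$ onto itself and sends rational points to rational points. Applying it to $R_1,R_2,R_3$ produces three further rational points $R_1',R_2',R_3'$, giving the six points asserted in the statement. (In the language of the group law on $E[q]$ these six points are $\pm(P+T_i)$ for $i=1,2,3$, but the lemma needs only their rationality, for which the elementary secant-and-reflection argument is enough.)

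The main obstacle is not the existence of the points but the bookkeeping. I must check that the six points are genuinely distinct from one another and from $P$, and dispose of the degenerate configurations in which a secant through $P$ and $T_i$ is vertical or tangent to $E[q]$, so that the "third" intersection coincides with $P$ or escapes to the point at infinity. Verifying that the resulting coordinates agree line-by-line with Table~\ref{table-rational-points-identified-by-a-congruent-right-triangle}, and that the excluded cases correspond exactly to $P$ being $2$-torsion, is the part that demands care rather than ingenuity.
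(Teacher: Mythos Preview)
Your argument is correct, but it follows a different route from the paper's. The paper does not invoke the secant construction or Vieta's formulas at all. Instead it works on the triangle side of the bijection: it observes that the system $a^2+b^2=c^2$, $ab=2q$ defining $A[q]$ is preserved under the sign changes $(a,b,c)\mapsto(\pm a,\pm b,\pm c)$ (with $a,b$ having the same sign) and the swap $(a,b,c)\mapsto(b,a,c)$, and then pushes each of these eight triangle solutions through the map $(a,b,c)\mapsto\bigl(q\frac{a+c}{b},\,2q^2\frac{a+c}{b^2}\bigr)$ of~(\ref{relation-congruent-right-triangle-point}), reading off Table~\ref{table-rational-points-identified-by-a-congruent-right-triangle} directly.

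Each approach buys something the other does not. Yours immediately establishes the geometric description stated in the lemma---that the six points are the third intersections of the chords through $P$ and the three $2$-torsion points, together with their $x$-axis reflections---which the paper's symmetry argument never actually verifies. Conversely, the paper's argument produces the table entries in the form recorded there (coordinates written in terms of $a,b,c,q$) with essentially no computation, whereas to match the table line-by-line you would still have to substitute $x_P=q\frac{a+c}{b}$, $y_P=2q^2\frac{a+c}{b^2}$ (or the inverse relations~(\ref{relation-point-congruent-right-triangle})) into your Vieta expressions and simplify.
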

\begin{proof}
The proof follows directly from the fact that if $(a,b,c)$ is a solution of the set of equations (\ref{real-elliptic-system}) then we get also a set of solutions of these equations by changing sign to the parameters $a,\, b,\, c$, with the condition that $a$ and $b$ have the same sign. (See Tab. \ref{table-rational-points-identified-by-a-congruent-right-triangle}.)
\end{proof}

\begin{table}[t]
\caption{Rational points on elliptic curve $E[q]:\, y^2=x^3-q^2x$, related by symmetries.}
\label{table-rational-points-identified-by-a-congruent-right-triangle}
\begin{tabular}{|l|l|l|l|}
\hline
{\rm{\footnotesize $(a,b,c)$}}\hfill&{\rm{\footnotesize $P=(x,y)=(q\frac{a+c}{b},2q^2\frac{a+c}{b^2})$}}\hfill&{\rm{\footnotesize $(\star):\, (\frac{25}{4},\frac{75}{8})$}}\hfill
&{\rm{\footnotesize $(\star\star):\, (12,36)$}}\hfill\\
\hline
{\rm{\footnotesize $(-a,-b,-c)$}}\hfill&{\rm{\footnotesize $P_1=(x_1,y_1)=(q\frac{a+c}{b},-2n^2\frac{a+c}{b^2})$}}\hfill&{\rm{\footnotesize $(\star):\, (\frac{25}{4},-\frac{75}{8})$}}\hfill&{\rm{\footnotesize $(\star\star):\, (12,-36)$}}\hfill\\
\hline
\hfil{\rm{\footnotesize $(a,b,-c)$}}\hfill&{\rm{\footnotesize $P_2=(x_2,y_2)=(q\frac{a-c}{b},2n^2\frac{a-c}{b^2})$}}\hfill&{\rm{\footnotesize $(\star):\, (-4,-6)$}}\hfill&{\rm{\footnotesize $(\star\star):\, (-3,-9)$}}\hfill\\
\hline
{\rm{\footnotesize $(-a,-b,c)$}}\hfill&{\rm{\footnotesize $P_3=(x_3,y_3)=(-q\frac{c-a}{b},2n^2\frac{c-a}{b^2})$}}\hfill&{\rm{\footnotesize $(\star):\, (-4,6)$}}\hfill&{\rm{\footnotesize $(\star\star):\, (-3,9)$}}\hfill\\
\hline
{\rm{\footnotesize $(b,a,-c)$}}\hfill&{\rm{\footnotesize $P_4=(x_4,y_4)=(n\frac{b-c}{a},2q^2\frac{b-c}{a^2})$}}\hfill&{\rm{\footnotesize $(\star):\, (-\frac{5}{9},-\frac{100}{27})$}}\hfill&{\rm{\footnotesize $(\star\star):\, (-2,-8)$}}\hfill\\
\hline
{\rm{\footnotesize $(-b,-a,c)$}}\hfill&{\rm{\footnotesize $P_5=(x_5,y_5)=(q\frac{c-b}{-a},2n^2\frac{c-b}{a^2})$}}\hfill&{\rm{\footnotesize $(\star):\, (-\frac{5}{9},\frac{100}{27})$}}\hfill&{\rm{\footnotesize $(\star\star):\, (-2,8)$}}\hfill\\
\hline
{\rm{\footnotesize $(b,a,c)$}}\hfill&{\rm{\footnotesize $P_6=(x_6,y_6)=(n\frac{b+c}{a},2q^2\frac{b+c}{a^2})$}}\hfill&{\rm{\footnotesize $(\star):\, (15,50)$}}\hfill&{\rm{\footnotesize $(\star\star):\, (18,72)$}}\hfill\\
\hline
{\rm{\footnotesize $(-b,-a,-c)$}}\hfill&{\rm{\footnotesize $P_7=(x_7,y_7)=(n\frac{-b-c}{-a},2q^2\frac{-b-c}{a^2})$}}\hfill&{\rm{\footnotesize $(\star):\, (15,-50)$}}\hfill&{\rm{\footnotesize $(\star\star):\, (18,-72)$}}\hfill\\
\hline
\multicolumn{4}{l}{\rm{\footnotesize $(\star)$: Here is reported the corresponding example $q=5$. (See also Tab \ref{congruent-numbers-pythagorean} and Tab. \ref{table-examples-of-strong-congruent-numbers}.)}}\hfill\\
\multicolumn{4}{l}{\rm{\footnotesize $(\star\star)$: Here is reported the corresponding example $q=6$. (See also Tab \ref{congruent-numbers-pythagorean} and Tab. \ref{table-examples-of-strong-congruent-numbers}.)}}\hfill\\
\end{tabular}
\end{table}

\begin{proposition}[Mordell-Weil theorem \cite{MORDELL, WEIL1, WEIL2, SILVERMAN}]\label{abelian-groups-associated-to-congruent-numbers}
$\bullet$\hskip 2pt We can associate to each congruent number, or elliptic curve representing a congruent number, a finitely generated abelian group, $\mathbf{E}[q]$. The structure of $\mathbf{E}[q]$ is given in {\em(\ref{structure-structure-group-congruent-numbers})}.
\begin{equation}\label{structure-structure-group-congruent-numbers}
  \mathbf{E}[q]=\mathbb{Z}^r\bigoplus\mathbf{T}[q]=\underbrace{\mathbb{Z}\oplus\cdots\oplus\mathbb{Z}}_{r}\bigoplus \left(\oplus_{1\le i\le s}\mathbb{Z}_{r_i}\right),\, r,\, s<\infty.
\end{equation}
The {\em rank} of the elliptic curve $E[q]$ is the rank of $\mathbf{E}[q]$, i.e., the number $r$ representing the number of independent points of infinite order.\footnote{This means that there exists a finite sub-set of the rational points of $E[q]$, such that all the other rational points can be generated by the abelian group law.} Furthermore $\mathbf{T}[q]=\oplus_{1\le i\le r}\mathbb{Z}_{r_i}\subset\mathbf{E}[q]$ is called the {\em torsion subgroup} of $\mathbf{E}[q]$.

$\bullet$\hskip 2pt If $E[q]$ and $E[\bar q]$ are two equivalent congruent elliptic curves, ($\bar q=s^2q$, $s\in\mathbb{Q}$), then there is a canonical isomorphism $\varphi_*:\mathbf{E}[q]\cong \mathbf{E}[\bar q]$. In other words the elliptic curve $E[q]$ and $E[q']$ are {\em isogenous}. (See also Theorem \ref{modularity-theorem}.)\footnote{This theorem can be also generalized to the case where instead of $\mathbb{Q}$ one works with a number field (or algebraic number field) $K$, that is a finite field extension $K/\mathbb{Q}$ of $\mathbb{Q}$, namely $K$ is a finite dimensional $\mathbb{Q}$-vector space. (See \cite{MEREL}.) Recall that any field extension $L/K$ is called {\em algebraic} if any element $x\in L$, is {\em algebraic} over $K$, i.e., there exists $P\in K[x]$, such that $P(x)=0\in L$. A {\em transcendental} extension is an extension that is not algebraic. Transcendental extensions are of infinite degree. (The {\em degree}, $[L:K]$, of an extension $L/K$, is the dimension of the $K$-vector space $L$.) Therefore all finite extensions are algebraic. The converse is not true. An example of transcendental extension is $\mathbb{R}/\mathbb{Q}$, since $\dim_{\mathbb{Q}}\mathbb{R}=\infty$. In fact the {\em Napier's constant} (or {\em Euler's number}) $e=\mathop{\lim}\limits_{n\to\infty}(1+\frac{1}{n})^n=\sum_{0\le n\le\infty}\frac{1}{n!}$, is an irrational number and cannot be a root of some polynomial $P(x)\in\mathbb{Q}[x]$, $x\in\mathbb{R}$. Instead $\mathbb{C}/\mathbb{R}$ is an example of algebraic extension, one has $[\mathbb{C}:\mathbb{R}]=2$, with canonical basis $\{1,i\}\subset\mathbb{C}$.}
\end{proposition}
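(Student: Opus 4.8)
The plan is to realize $\mathbf{E}[q]$ as the group of rational points of the projective curve attached to $y^2=x^3-q^2x$ and to prove it finitely generated by descent; the structure formula (\ref{structure-structure-group-congruent-numbers}) is then the classification theorem for finitely generated abelian groups applied to $\mathbf{E}[q]$. First I would place on the rational points, completed by the point at infinity $O$, the chord-and-tangent addition with neutral element $O$; associativity, commutativity and existence of opposites follow from the standard explicit computation, and the sum of rational points is rational because the secant or tangent line through rational points has rational coefficients, so its residual intersection with the cubic is rational. Next I would prove the weak Mordell--Weil theorem, that $\mathbf{E}[q]/2\,\mathbf{E}[q]$ is finite. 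Here the special shape of the congruent-number curve helps: $E[q]:\, y^2=x(x-q)(x+q)$ has its full two-torsion $\{O,(0,0),(q,0),(-q,0)\}$ rational over $\mathbb{Q}$, so one may use the explicit complete $2$-descent homomorphism
\begin{equation}
\delta:\mathbf{E}[q]/2\,\mathbf{E}[q]\longrightarrow \bigl(\mathbb{Q}^{\times}/(\mathbb{Q}^{\times})^{2}\bigr)\times\bigl(\mathbb{Q}^{\times}/(\mathbb{Q}^{\times})^{2}\bigr),
\end{equation}
which on $P=(x,y)$ with $x\notin\{0,\pm q\}$ sends $P\mapsto(x,\,x-q)$ modulo squares (with the customary modifications at the two-torsion points). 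The map $\delta$ is injective with kernel $2\,\mathbf{E}[q]$, and a valuation argument shows that the components of $\delta(P)$ are supported only on primes dividing $2q$; hence the image is finite and so is $\mathbf{E}[q]/2\,\mathbf{E}[q]$.

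Then I would introduce the naive logarithmic height $h(P)=\log\max(|u|,|v|)$, where $x(P)=u/v$ in lowest terms and $h(O)=0$, and establish the three classical estimates: the set $\{P\,:\,h(P)\le C\}$ is finite for each $C$; there is $c_{1}$ with $h(2P)\ge 4\,h(P)-c_{1}$; and there is $c_{2}$ with $h(P+Q)\le 2\,h(P)+2\,h(Q)+c_{2}$, all obtained by elementary manipulation of the addition formulas. Finally, choosing coset representatives $Q_{1},\dots,Q_{t}$ of the finite group $\mathbf{E}[q]/2\,\mathbf{E}[q]$, for an arbitrary $P$ I would write $P-Q_{i_{1}}=2P_{1}$, $P_{1}-Q_{i_{2}}=2P_{2}$, and so on; the height inequalities force $h(P_{k})$ below a fixed bound after finitely many steps, so $P$ lies in the subgroup generated by the $Q_{j}$ and the finitely many points of height below that bound. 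Thus $\mathbf{E}[q]$ is finitely generated, and the fundamental theorem of finitely generated abelian groups gives $\mathbf{E}[q]=\mathbb{Z}^{r}\oplus\mathbf{T}[q]$ with $\mathbf{T}[q]$ finite, which is (\ref{structure-structure-group-congruent-numbers}).

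For the second assertion I would use the transformation $\varphi:(x,y)\mapsto(s^{2}x,\,s^{3}y)$ of Lemma \ref{lemma-congruence-equivalent-elliptic-curve}. Since $s\in\mathbb{Q}$, both $\varphi$ and its inverse $(x,y)\mapsto(s^{-2}x,\,s^{-3}y)$ are defined over $\mathbb{Q}$, and $\varphi$ carries $E[q]$ isomorphically onto $E[\bar q]$ with $\bar q=s^{2}q$ while fixing the point at infinity. An isomorphism of curves that preserves the identity is automatically a homomorphism for the two group laws, so $\varphi$ restricts to a group isomorphism $\varphi_{*}:\mathbf{E}[q]\xrightarrow{\ \sim\ }\mathbf{E}[\bar q]$ on rational points; in particular $E[q]$ and $E[\bar q]$ are isogenous, indeed isomorphic over $\mathbb{Q}$.

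The main obstacle is the weak Mordell--Weil step. The group law, the height inequalities and the final descent are essentially formal once finiteness of $\mathbf{E}[q]/2\,\mathbf{E}[q]$ is available; it is precisely in that finiteness that the arithmetic of $\mathbb{Q}$ genuinely enters, through the finiteness of the set of square-classes ramified only at primes dividing $2q$ (equivalently, through finiteness of the class group and finite generation of the unit group of the number fields involved). Verifying injectivity of $\delta$ and pinning down its image therefore require the most care.
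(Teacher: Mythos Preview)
Your argument is correct. For the first bullet you go well beyond the paper: the paper's proof simply records the chord--and--tangent group law, notes that the rational points together with $\infty$ form a subgroup, and then \emph{cites} \cite{MORDELL, WEIL1, WEIL2} for the finite generation, deducing the structure formula from the classification of finitely generated abelian groups. You instead sketch an actual proof of Mordell--Weil tailored to $E[q]$: full rational $2$-torsion permits the explicit $2$-descent map into $(\mathbb{Q}^{\times}/(\mathbb{Q}^{\times})^{2})^{2}$, giving weak Mordell--Weil directly, and then the naive height plus the standard descent finishes. This buys a self-contained argument at the cost of the height inequalities and the ramification bound on the image of $\delta$; the paper buys brevity by treating Mordell--Weil as a black box. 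For the second bullet your route coincides with the paper's: both invoke the map $\varphi:(x,y)\mapsto(s^{2}x,s^{3}y)$ of Lemma~\ref{lemma-congruence-equivalent-elliptic-curve} and observe that, being a $\mathbb{Q}$-rational isomorphism fixing $O$, it induces a group isomorphism on rational points.
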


\begin{proof}
$\bullet$\hskip 2pt On an elliptic curve $E[q]$ can be defined a multiplication, respect to which it becomes an abelian group $G[E]$. On $\mathbb{R}^2$, the more general expression of an elliptic curve is of the form $y^2=x^3+ax+b$, hence a non-singular plane curve. One considers compactified such a curve by adding the $\infty$ point in the Alexandrov compactification of $\mathbb{R}^2$: $S^2=\mathbb{R}^2\bigcup\{\infty\}$. The point $\{\infty\}$ is the identity element in the natural group structure defined on $E[q]$. The set of rational points, including $\infty$, form a subgroup $\mathbf{E}[q]$ of $G[q]$.\footnote{Let $P=(x_P,y_P)$, $Q=(x_Q,y_Q)$ be two points on the elliptic curve $E$, with $x_P\not= x_Q$, then $P+Q+R=0$, identifies another point $R$ on $E$. The addition defined in this way is called the addition by means of secant (or tangent). More precisely, in the case that the line passing for $P$ and $Q$ is tangent to $E$ in $Q$, then one writes $P+Q+Q=0$, or $P+P+Q=0$ (only tangent in $P$), or $P+Q+0=0$ (only secant in $P$ and $Q$).} It can be seen that the torsion points of an elliptic curve are those with $y=0$. (See in Example \ref{examples-of-congruent-numbers}.) Therefore existence of rational points with $y\not=0$ is equivalent to say that the elliptic curve has rank positive. The structure (\ref{structure-structure-group-congruent-numbers}) follows from the fact that $\mathbf{E}[n]$ is a finite generated abelian group \cite{MORDELL, WEIL1, WEIL2}.

$\bullet$\hskip 2pt The isomorphism between structure groups $\mathbf{E}[q]$ and $\mathbf{E}[\bar q]$ is the one induced by the diffeomorphism $\varphi:\mathbb{R}^2\to\mathbb{R}^2$ relating the points of elliptic curves $E[q]$ and $E[\bar q]$, (see Lemma \ref{lemma-congruence-equivalent-elliptic-curve}).
\end{proof}
\begin{proposition}[Mazur's theorem \cite{MAZUR}]\label{mazur-theorem-elliptic-curves}
The torsion groups of elliptic curves can be only of the following types $\mathbb{Z}/N\mathbb{Z}$, $N\in\{1,2,\cdots, 10,12\}$, or $\mathbb{Z}/2\mathbb{Z}\times \mathbb{Z}/2N\mathbb{Z}$, $N\in\{1,2,3,4\}$.\footnote{An {\em elliptic curve} over $\mathbb{R}$ is a line in $\mathbb{R}^2$, $(x,y)$, defined by an equation $y^2-P(x)=0$, where $P(x)\in\mathbb{R}[x]$ is a cubic polynomial with distinct roots. By a suitable diffeomorphism of $\mathbb{R}^2$, equation $y^2-P(x)=0$ can be rewritten in the {\em Weierstrass form}: $E=E_{A,B}: y^2-4x^3+Ax+B=0$, with $A,\, B\in\mathbb{R}$, and $\Delta_{E}:=A^3-27 B^2\not=0$. [The polynomial $P(x)=4x^3-Ax-B$ has distinct roots iff its discriminant ${\rm disc}(P(x))=16\Delta_E\not=0$.] By diffeomorphisms of $\mathbb{R}^2$, $(x,y)\mapsto(s^2 x, s^3 y)$, $s\in\mathbb{R}^{\times}$, equation defining $E$ transforms into the following one: $E_{A_1,B_1}: y^2-4x^3+A_1x+B_1=0$, with $A_1=A/s^4$, $B_1=B/s^6$. One has $\Delta_{A_1,B_1}=\Delta_{A,B}
s^{-12}$. One has instead the invariant $j_E=\frac{(12 A)^3}{\Delta_E}=\frac{(12 A)^3}{A^3-27B^2}=j_{E_{A_1,B_1}}=\frac{(12 A_1)^3}{\Delta_{E_{A_1,B_1}}}=\frac{(12 A_1)^3}{A_1^3-27B_1^2}$. The {\em classical Weierstrass form} of an elliptic curve is $E_{a,b}: y^2-x^3-ax-b=0$, with $4a^3+27b^2\not=0$, is obtained by the diffeomorphism $(x,y)\mapsto(x, 2y)$, with $a=-A/4$, $b=-B/4$. Then $\Delta E_{a,b}=-16(4a^3+27 b^2)=A^3-27 B^2=\Delta_E$ and $j_{E_{a,b}}=-\frac{(12a)^3}{\Delta_{E_{a,b}}}=j_{E_{A,B}}$. Therefore one can state that elliptic curves in the planes $\mathbb{R}^2$, are in general of the type $y^2=x^3+ax+b$, with the condition $\triangle=-16(4a^3+27 b^2)\not=0$, in order to not be singular curves. ($\triangle$  is defined the {\em discriminant} of the elliptic curve.) An invariant for isomorphism classes is the {\em Klein's $j$-invariant}: $j=(-48\, a)^3/\triangle$. In Tab. \ref{klein-invariant-elliptic-complex-curves} is reported the Klein's $j$-invariant for plane elliptic curves over any field. Let us emphasize that if the characteristic of the fundamental field $K$ is neither $2$ nor $3$, then every elliptic curve over $K$ can be written in the form $y^2=x^3-px-q$, where $p$ and $q$ are elements of $K$ such that the polynomial $P(x)=x^3-px-q$ does not have any double roots. In the particular case of $y^2=x^3-q^2 x$ then its Klein's $j$-invariant is $j=1728$. So all elliptic curves of this type are diffeomorphic with the same Klein's $j$-invariant $j=1728$. If the characteristic of $K$ is $2$ or $3$, then the general form of elliptic curve is more complex. In characteristic $3$, it assumes the expression $y^2=4x^3+b_2x^2+2b_4x+b_6$, such that the polynomial $P(x)= 4x^3+b_2x^2+2b_4x+b_6$ has distinct roots. In characteristic $2$, the most general equation is $y^2+a_1xy+a_3y=x^3+a_2x^2+a_4x+a_6$, provided that the variety it defines is non-singular. (If characteristic were not an obstruction, each equation would reduce to the previous ones by a suitable change of variables.) In general one takes $x,\, y$ belonging to the {\em algebraic closure} of $K$, i.e., an algebraic extension $F/K$ that is {\em algebraically closed}, i.e., contains a root for every non-constant polynomial in $F[x]$.}
\end{proposition}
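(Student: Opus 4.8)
The plan is to separate the two halves of the statement: that every group in the list actually occurs as the torsion subgroup of some elliptic curve over $\mathbb{Q}$, and that no other finite abelian group can occur. The first half I would settle by exhibiting explicit one-parameter families of elliptic curves in Weierstrass form with prescribed torsion. For each group $\mathbb{Z}/N\mathbb{Z}$ with $N\le 10$ or $N=12$, and for each $\mathbb{Z}/2\mathbb{Z}\times\mathbb{Z}/2N\mathbb{Z}$ with $N\le 4$, there is such a rational family (the classical Kubert parametrizations); producing one member with a point of the required order, and invoking the Nagell--Lutz criterion to confirm that the exhibited points are genuinely of finite order, suffices.

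The second half is the substance, and the decisive step is a modular reduction. A point of exact order $N$ on an elliptic curve $E/\mathbb{Q}$ is the same datum as a non-cuspidal $\mathbb{Q}$-rational point on the open modular curve $Y_1(N)$, and a subgroup $\mathbb{Z}/2\mathbb{Z}\times\mathbb{Z}/2N\mathbb{Z}$ corresponds to a rational point on a suitable modular curve of level $(2,2N)$. Thus the whole question is converted into determining, for each $N$, whether $X_1(N)$ carries $\mathbb{Q}$-rational points beyond its cusps. Since the relevant cusps are defined over cyclotomic fields and some are rational, they must be explicitly excised before counting.

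The analysis then proceeds by the genus of $X_1(N)$. For $N\le 10$ and $N=12$ the curve has genus $0$ and a rational cusp, hence is $\mathbb{P}^1$ over $\mathbb{Q}$ and has infinitely many rational points, recovering the realizability families of the first paragraph. For $N=11,14,15$ the genus is $1$, so $X_1(N)$ is itself an elliptic curve over $\mathbb{Q}$; here I would compute its Mordell--Weil group directly and verify that it is finite and consists only of cusps, so that no elliptic curve admits a rational point of order $11$, $14$, or $15$. For $N=13$ and all $N\ge 16$ the genus is at least $2$, and the goal is to show that the only rational points are again the cusps.

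The genus $\ge 2$ cases are the hard part, and within them the crux is the prime case: no elliptic curve over $\mathbb{Q}$ has a rational point of prime order $p>7$. This is where I would invoke Mazur's Eisenstein-ideal machinery. The strategy is to pass to the Jacobian $J_0(p)$ of $X_0(p)$, introduce the Eisenstein ideal $\mathcal{I}$ in the Hecke algebra, and show that the associated Eisenstein quotient has Mordell--Weil rank zero over $\mathbb{Q}$; a formal-immersion argument at a carefully chosen auxiliary prime then forces any hypothetical rational point of order $p$ to reduce to a cusp, a contradiction. Granting the prime case, a point of order $N$ yields a point of order $p$ for every prime $p\mid N$, so $N$ is supported only on $\{2,3,5,7\}$; the remaining prime-power and composite levels (such as $\mathbb{Z}/16\mathbb{Z}$, $\mathbb{Z}/9\mathbb{Z}$, $\mathbb{Z}/25\mathbb{Z}$, or $\mathbb{Z}/2\mathbb{Z}\times\mathbb{Z}/10\mathbb{Z}$) are eliminated by the same modular-curve analysis, each corresponding curve having genus $\ge 1$ with only cuspidal rational points. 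I expect the main obstacle to be precisely this arithmetic of $J_0(p)$: controlling the rank of the Eisenstein quotient and making the formal-immersion estimate effective is the genuinely deep content of Mazur's theorem, and there is no elementary substitute for it.
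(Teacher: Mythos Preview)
Your outline is a faithful high-level sketch of Mazur's original argument, but note that the paper does not supply a proof of this proposition at all: it is stated as a cited result, with the reference \cite{MAZUR} in the heading, and the text moves directly to the Nagell--Lutz theorem. In this paper Mazur's theorem functions purely as background, so there is no in-paper proof to compare your proposal against.

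That said, your sketch correctly identifies the architecture of the actual proof in \cite{MAZUR}: the translation to rational points on $X_1(N)$, the genus trichotomy, and the Eisenstein-ideal/formal-immersion argument for the prime levels $p>7$. You are also right that the last step is the irreducibly deep part and admits no elementary shortcut. One small correction: $X_1(13)$ has genus $2$, not ``$\ge 2$ for $N=13$ and all $N\ge 16$'' as a separate clause might suggest---your phrasing is fine but be aware that $N=16,18$ also have genus $2$ while $N=17,19,\ldots$ jump higher, so the case analysis is slightly more granular than a clean cutoff. For the purposes of this paper, however, simply citing \cite{MAZUR} is what the author does and what would be expected.
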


\begin{proposition}[Nagell-Lutz theorem]\label{nagell-lutz-theorem}
$\bullet$\hskip 2pt Let $y^2=x^3+ax^2+bx+c$ defines a non-singular cubic curve $C$ with integer coefficients $a$, $b$, $c$, and let $\Delta=-4a^3c+a^2b^2+18 abc-4b^3$ be the discriminant of the cubic polynomial on the right side. If $P=(x,y)$ is a rational point of finite order on $C$, for the elliptic curve group law, then:

{\rm(i)} $x$ and $y$ are integers;

{\rm(ii)} either $y=0$, in which case $P$ has order two, or else $y$ divide $\Delta$, which implies that $y^2$ divides $\Delta$.

$\bullet$\hskip 2pt {\rm(Generalized form)} For non-singular cubic curve whose Weierstrass form $y^2+a_1xy+a_3y=x^3+a_2x^2+a_4x+a_6$, has integer coefficients, any rational point $P=(x,y)$ of finite order, (namely torsion-point), must have integer coordinates, or else have order $2$ and coordinates of the form $(x=\frac{m}{4},y=\frac{n}{8})$ for $m$ and $n$ integers.
\end{proposition}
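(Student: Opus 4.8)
The plan is to split the statement into its two assertions — integrality of the coordinates of a torsion point, and divisibility by the discriminant — and to treat the basic and the generalized (full Weierstrass) form uniformly, isolating the prime $p=2$ only at the very end. Throughout I use the abelian group structure on the rational points and the explicit secant--tangent addition law recalled in Proposition \ref{abelian-groups-associated-to-congruent-numbers}.

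First I would prove the integrality (i). The mechanism is a $p$-adic filtration of the points lying ``near $\infty$''. For a prime $p$ and $\nu\ge 1$ let $C(p^\nu)$ be the set of rational points $P=(x,y)$ (together with $\infty$) with $v_p(x)\le -2\nu$ and $v_p(y)\le -3\nu$, where $v_p$ is the $p$-adic valuation. From $y^2=x^3+ax^2+bx+c$ one checks that a prime $p$ occurring in a denominator forces the coupled valuations $v_p(x)=-2\nu$, $v_p(y)=-3\nu$, so the $C(p^\nu)$ are nested. The key point is that each $C(p^\nu)$ is a subgroup and that the local parameter $t=-x/y$ at $\infty$ embeds $C(p)$ into the additive group $(p\Z_p,+)$; more precisely the induced maps $C(p^\nu)/C(p^{\nu+1})\hookrightarrow p^\nu\Z_p/p^{\nu+1}\Z_p$ are injective homomorphisms into an additive, hence torsion-free, group. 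Consequently a point of finite order in $C(p)$ must be $\infty$, and since this holds for every $p$ a torsion point can carry no prime in the denominators of $x$ or $y$, i.e. $x,y\in\Z$. The inequalities describing how $v_p$ transforms under the addition law are the genuine obstacle here: they are exactly the formal-group-type estimates that make the successive quotients embed into an additive group, and everything else in part (i) is formal once they are in place.

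With integrality established, the dichotomy in (ii) is immediate: if $2P=\infty$ then $P$ has order two and lies on the $x$-axis, so $y=0$; otherwise $y\neq 0$. For the divisibility I would use that $2P$ is again torsion, hence again integral by (i). Writing $f(x)=x^3+ax^2+bx+c$, the tangent at $P$ has slope $\lambda=f'(x)/(2y)$ and the duplication formula gives $x(2P)=\lambda^2-a-2x$. Integrality of $x(2P)$, $x$ and $a$ forces $\lambda^2\in\Z$, and since $\lambda\in\Q$ this yields $\lambda\in\Z$, i.e. $2y\mid f'(x)$. Thus for every prime $p$ one has $v_p(f'(x))\ge v_p(y)$, while $y^2=f(x)$ gives $v_p(f(x))=2v_p(y)$.

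Finally I would convert these two local inequalities into $y^2\mid\Delta$. The integral translation $t\mapsto t+x$ leaves the discriminant unchanged and turns $f$ into $s^3+As^2+Bs+C$ with $A=3x+a\in\Z$, $B=f'(x)$ and $C=f(x)=y^2$. Substituting into $\Delta=18ABC-4A^3C+A^2B^2-4B^3-27C^2$ and comparing $p$-adic valuations term by term, each summand is divisible by $p^{2m}$ with $m=v_p(y)$: the three terms carrying a factor $C$ are automatically $\ge 2m$, while $A^2B^2$ and $-4B^3$ are $\ge 2m$ and $\ge 3m$ respectively because $v_p(B)\ge m$. Hence $v_p(\Delta)\ge 2v_p(y)$ for all $p$, that is $y^2\mid\Delta$, which a fortiori gives $y\mid\Delta$. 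For the generalized form I would rerun the identical denominator argument on $y^2+a_1xy+a_3y=x^3+a_2x^2+a_4x+a_6$; the only new feature is that completing the square and cube introduces the primes $2$ and $3$ in the denominators, so $p=2$ must be analysed separately, and it is precisely there that an order-two point may fail to be integral, producing the exceptional coordinates $(x=\tfrac{m}{4},\,y=\tfrac{n}{8})$.
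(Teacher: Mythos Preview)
Your argument is correct and is precisely the classical proof found in the references the paper cites: the paper itself does not give a proof of this proposition at all, but simply writes ``See \cite{LUTZ,SILVERMAN,SILVERMAN-TATE}.'' Your sketch---the $p$-adic filtration $C(p^\nu)$ at $\infty$ for integrality, followed by the duplication formula to force $2y\mid f'(x)$, and then the translation $t\mapsto t+x$ to read off $y^2\mid\Delta$ term by term---is exactly the route taken in Silverman--Tate, so there is nothing to compare.

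One small remark: the discriminant written in the statement of the proposition is missing the $-27c^2$ term (the correct cubic discriminant is $\Delta=a^2b^2-4a^3c-4b^3+18abc-27c^2$). You silently used the correct formula in your translation argument, which is what makes the valuation count for $-27C^2$ go through; the version printed in the statement would not literally support the conclusion $y^2\mid\Delta$ as written.
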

\begin{proof}
See \cite{LUTZ,SILVERMAN,SILVERMAN-TATE}.
\end{proof}

\begin{cor}\label{infinitely-many-rational-points}
A number $q\in\mathbb{Q}$ is congruent iff there exist infinitely many rational points $P=(x_P,y_P)$ on the elliptic curve $E[q]$.
\end{cor}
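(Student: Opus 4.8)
The plan is to combine the geometric characterization of congruent numbers in Lemma \ref{lemma-congruent-numbers-as-elliptic} with the group-theoretic structure of $\mathbf{E}[q]$ supplied by the Mordell-Weil theorem (Proposition \ref{abelian-groups-associated-to-congruent-numbers}). The pivot of the argument is the fact, already recorded in the proof of Proposition \ref{abelian-groups-associated-to-congruent-numbers}, that on the curve $E[q]:\, y^2=x^3-q^2x$ the torsion points are exactly those with $y=0$ (together with the point $\infty$). Concretely, the affine rational points of finite order are the three $2$-torsion points $(0,0)$, $(q,0)$, $(-q,0)$, so that any rational point $P=(x_P,y_P)$ with $y_P\not=0$ necessarily has infinite order in $\mathbf{E}[q]$.

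Granting this, I would argue both implications as follows. For the forward direction, suppose $q$ is congruent. By Lemma \ref{lemma-congruent-numbers-as-elliptic} there is a rational point $P=(x_P,y_P)$ on $E[q]$ with $y_P\not=0$; by the previous paragraph $P$ is non-torsion, hence the cyclic subgroup $\langle P\rangle=\{kP\, |\, k\in\mathbb{Z}\}\subset\mathbf{E}[q]$ is isomorphic to $\mathbb{Z}$ and furnishes infinitely many distinct rational points on $E[q]$. Equivalently, the presence of a non-torsion point forces the rank $r\ge 1$ in the decomposition (\ref{structure-structure-group-congruent-numbers}), and $\mathbb{Z}^r$ with $r\ge1$ is infinite.

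For the converse, suppose $E[q]$ carries infinitely many rational points. By Proposition \ref{abelian-groups-associated-to-congruent-numbers} the group $\mathbf{E}[q]=\mathbb{Z}^r\oplus\mathbf{T}[q]$ is finitely generated, and its torsion subgroup $\mathbf{T}[q]$ is finite (indeed Proposition \ref{mazur-theorem-elliptic-curves} bounds it severely). Hence the infinitude of $\mathbf{E}[q]$ forces $r\ge 1$, so there exists a rational point $P$ of infinite order. Such a $P$ is not torsion, and therefore, by the characterization above, satisfies $y_P\not=0$. Lemma \ref{lemma-congruent-numbers-as-elliptic} then yields a rational right triangle of area $q$, i.e.\ $q$ is congruent.

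I expect the only genuine difficulty to lie in the pivotal claim that the full rational torsion of $E[q]$ consists precisely of $\{\infty,(0,0),(q,0),(-q,0)\}$ --- equivalently, that no rational point with $y_P\not=0$ can have finite order. This is where Proposition \ref{nagell-lutz-theorem} (Nagell-Lutz) enters: after clearing denominators so that $q\in\mathbb{Z}$, any rational torsion point must have integral coordinates with $y_P$ dividing the discriminant of $x^3-q^2x$, and a short case analysis (or reduction modulo small primes of good reduction) shows that the only such points are the three $2$-torsion points with $y_P=0$. Everything else --- the passage from a single non-torsion point to infinitely many points, and the finiteness of the torsion --- is immediate from the stated Mordell-Weil and Mazur theorems.
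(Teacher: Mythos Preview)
Your argument is correct and is exactly the one the paper intends: the corollary is stated without a separate proof precisely because it is the immediate consequence of combining Lemma \ref{lemma-congruent-numbers-as-elliptic} with the Mordell--Weil structure (Proposition \ref{abelian-groups-associated-to-congruent-numbers}) and the determination of the torsion subgroup via Nagell--Lutz (Proposition \ref{nagell-lutz-theorem}, spelled out in Example \ref{examples-of-congruent-numbers}). Your identification of the torsion computation as the only substantive step, and your reduction to integral $q$ before applying Nagell--Lutz, are both appropriate.
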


\begin{table}[t]
\caption{The Klein's $j$-invariant for plane elliptic curves over any field.}
\label{klein-invariant-elliptic-complex-curves}
\begin{tabular}{|l|l|}
\hline
\multicolumn{2}{|c|}{\rm{\footnotesize $y^2+a_1xy+a_3y=x^3+a_2x^2+a_4x+a_6$}}\\
\hline\hline
{\rm{\footnotesize $b_2=a_1^2+4a_2$}}&{\rm{\footnotesize $b_4=a_1a_3+2a_4$}}\hfill\\
\hline
{\rm{\footnotesize $b_6=a_3^2+4a_6$}}&{\rm{\footnotesize $b_8=a_1^2a_6-a_1a_3a_4+a_2a_3^2+4a_2a_6-a_4^2$}}\hfill\\
\hline
{\rm{\footnotesize $c_4=b_2^2-24b_4$}}&{\rm{\footnotesize $c_6=-b_2^3+36b_2b_4-216b_6$}}\hfill\\
\hline
{\rm{\footnotesize $\triangle=-b_2^2b_8+9b_2b_4b_6-8b_4^3-27b_6^2$}}&\hfil{\rm{\footnotesize $j=c_4^3/\triangle$}}\hfil\\
\hline
\multicolumn{2}{l}{\rm{\footnotesize If the field has characteristic different from $2$ or $3$, one has $j=1728 \frac{c_4^3}{c_4^3-c_6^2}$.}}\hfill\\
\multicolumn{2}{l}{\rm{\footnotesize See Tab. \ref{c-table-example-modular-functions-forms-cusps}, in Appendix C,  for more information on $j$ and $\Delta$ in the case of elliptic curves over $\mathbb{C}$.}}\hfill\\

\end{tabular}
\end{table}

\begin{example}\label{examples-of-congruent-numbers}
In the following we list some examples of congruent numbers identifying congruence equivalence classes.

$\bullet$\hskip 2pt {\rm(Fermat's theorem) [1640]}The number $1$ is not (strong-)congruent. More generally, no square number can be a congruent number. (See, e.g., \cite{COPPEL}.)

$\bullet$\hskip 2pt $n\equiv 3\, \hbox{\rm mod}\, 8$ is not a congruent number, but $2n$ is a congruent number.

$\bullet$\hskip 2pt $n\equiv 5\, \hbox{\rm mod}\, 8$ is a congruent number.

$\bullet$\hskip 2pt $n\equiv 7\, \hbox{\rm mod}\, 8$ is a congruent number and $2n$ is so.

$\bullet$\hskip 2pt In each of the congruence classes $5,\ 6,\ 7\, \hbox{\rm mod}\, 8$, there are infinitely many square-free congruent numbers with $k$ prime factors.

In Tab. \ref{table-examples-of-strong-congruent-numbers} are reported some examples of strong-congruent numbers and the sides of the corresponding congruent right triangles.

$\bullet$\hskip 2pt  If the elliptic curve $E[n]:\, y^2-x^3+n^2x=0$, has $n\in\mathbb{N}_{congr}$, then its group $\mathbf{E}[n]$ of rational points, has order greater than $4$: $|\mathbf{E}[n]|>4$. In fact its torsion points are the following:
\begin{equation}\label{torsion-points-of-e-n}
\{\infty,(x=-n,y=0), (x=0,y=0),(x=n,y=0),\}.
\end{equation}
This can be also obtained from the Nagell-Lutz theorem In fact, the discriminant of the cubic polynomial $P(x)=x^3-n^2x$ is $\Delta=-4n^6$ and by the Nagell-Lutz theorem one has $y^2|-4n^6$, so we take all possible solutions $y^2|(2n^3=2p_1^{a_1}p_2^{a_2}\cdots p_k^{a_k})$, where $2n^3=2p_1^{a_1}p_2^{a_2}\cdots p_k^{a_k}$ is the prime factorization. Therefore, we get $y\in\{\pm 2^tp_1^{r_1}p_2^{r_2}\cdots p_k^{r_k}\, |\, t=0,1,\, 0\le r_i\le a_i\}$. Since none of these values satisfies equation $y^2-x^3+n^2x=0$, it follows that all torsion points of $E[n]$ are the ones reported in {\rm(\ref{torsion-points-of-e-n})}, and $|\mathbf{T}[n]|=4$.
\end{example}
\begin{table}[t]
\caption{Examples of strong-congruent numbers $n\in \mathbb{N}_{congr}$ for square-free numbers $n\in{}_{\square}\mathbb{N}$, $1\le n\le 65$, some corresponding right triangles and non-congruent numbers $n\not\in {\rm ker}(L_\bullet)$.}
\scalebox{0.8}{$\label{table-examples-of-strong-congruent-numbers}
\begin{tabular}{|c|c|c|}
\hline
\hfil{\rm{\footnotesize $n\in {}_{\square}\mathbb{N}$}}\hfil&\hfil{\rm{\footnotesize $n\in \mathbb{N}_{congr}$}}\hfil&\hfil{\rm{\footnotesize ${[n|a,b,c]}$}}\hfil\\
\hline\hline
\hfil{\rm{\footnotesize $(\star)\, (1),\, (2),\, (3),\, [5]$}}\hfil&\hfil{\rm{\footnotesize $5$}}\hfil&\hfil{\rm{\footnotesize ${[5|\frac{3}{2},\frac{20}{3},\frac{41}{6}]}$}}\hfil\\
\hline
\hfil{\rm{\footnotesize $[6]$}}\hfil&\hfil{\rm{\footnotesize $6$}}\hfil&\hfil{\rm{\footnotesize ${[6|3,4,5]}$}}\hfil\\
\hline\hfil{\rm{\footnotesize $[7]$}}\hfil&\hfil{\rm{\footnotesize $7$}}\hfil&\hfil{\rm{\footnotesize ${[7|\frac{24}{5},\frac{35}{12},\frac{337}{60}]}$}}\hfil\\
\hline
\hfil{\rm{\footnotesize $(10),\, (11),\, [13],\, [14],\, [15],\, 17,\, (19),\, [21] $}}\hfil&\hfil{\rm{\footnotesize $21$}}\hfil&\hfil{\rm{\footnotesize ${[21|\frac{7}{2},12,\frac{25}{2}]}$}}\hfil\\
\hline
\hfil{\rm{\footnotesize $[22],\, [23],\, 26,\, 29,\, [30]$}}\hfil&\hfil{\rm{\footnotesize $30$}}\hfil&\hfil{\rm{\footnotesize ${[30|5,12,13]}$}}\hfil\\
\hline
\hfil{\rm{\footnotesize $31,\, (33),\, [34],\, 35,\, [37],\, [38],\, [39],\, [41]$}}\hfil&\hfil{\rm{\footnotesize $41$}}\hfil&\hfil{\rm{\footnotesize ${[41|\frac{40}{3},\frac{123}{20},\frac{881}{6}]}$}}\hfil\\
\hline
\hfil{\rm{\footnotesize $42,\, 43,\, [46],\, [47],\, 49,\, (51),\, 53,\, 55,\, (57),\, (58),\, (59),\, 61,\, 62,\, [65]$}}\hfil&\hfil{\rm{\footnotesize $65$}}\hfil&\hfil{\rm{\footnotesize ${[65|\frac{65}{6},12,\frac{97}{6}]}$}}\hfil\\
\hline
\multicolumn{3}{l}{\rm{\footnotesize The square-free integers between square-brackets in the column of ${}_{\square}\mathbb{N}$, denote strong-congruent numbers.}}\hfill\\
\multicolumn{3}{l}{\rm{\footnotesize The square-free integers between round-brackets in the column of ${}_{\square}\mathbb{N}$, denote non-congruent numbers,}}\hfill\\
\multicolumn{3}{l}{\rm{\footnotesize namely $n\not\in {\rm ker}( L_\bullet)\subset {}_{\square}\mathbb{N}$. (See Lemma \ref{elleiptic-congruent-bordism-groups-and-birch-swinnerton-dyer-conjecture}.)}}\hfill\\
\multicolumn{3}{l}{\rm{\footnotesize $(\star)$ Since $1$ is not a strong-congruent number, it follows that also $m^2$ is so, for any $m\in\mathbb{N}$.}}\hfill\\
\multicolumn{3}{l}{\rm{\footnotesize The proof that $5$ and $7$ are strong-congruent numbers was first given by Fibonacci \cite{LEONARDO-PISANO}.}}\hfill\\
\multicolumn{3}{l}{\rm{\footnotesize He stated also that $1$ is not a congruent number, but a first proof has been given by Fermat (See, e.g., in \cite{DICKSON}.)}}\hfill\\
\end{tabular}$}
\end{table}
\begin{theorem}[Criterion for congruent number $q\in\mathbb{Q}$]\label{criterion-congruent-number}
A number $q\in\mathbb{Q}$ is congruent if ${\rm rank}(\mathbf{E}(q))>0$.
\end{theorem}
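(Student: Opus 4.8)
The plan is to convert the rank hypothesis into the existence of a rational point on the elliptic curve $E[q]\colon y^2 = x^3 - q^2 x$ whose $y$-coordinate does not vanish, and then to quote Lemma~\ref{lemma-congruent-numbers-as-elliptic}, which states exactly that such a point witnesses $q$ being congruent. Throughout I take $q$ to be a positive rational, as is forced by the very notion of a congruent number (and since only $q^2$ enters the curve, the sign plays no role).

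First I would feed the hypothesis ${\rm rank}(\mathbf{E}[q]) > 0$ into the Mordell--Weil description of Proposition~\ref{abelian-groups-associated-to-congruent-numbers}: writing $\mathbf{E}[q] \cong \mathbb{Z}^r \oplus \mathbf{T}[q]$ with $r \geq 1$, the free summand supplies a rational point $P = (x_P, y_P)$ of infinite order. The whole argument then turns on showing that $P$ cannot lie on the line $y = 0$.

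Second, I would identify the rational points with vanishing $y$-coordinate. On $E[q]$ the condition $y = 0$ amounts to $x(x^2 - q^2) = 0$, giving the three affine points $(0,0)$, $(q,0)$, $(-q,0)$; together with $\infty$ these constitute the $2$-torsion subgroup $\mathbb{Z}/2\mathbb{Z} \times \mathbb{Z}/2\mathbb{Z}$, since the tangent at each of them is vertical and hence $2P = \infty$. This is consistent with the torsion computation already carried out in Example~\ref{examples-of-congruent-numbers} by means of the Nagell--Lutz theorem (Proposition~\ref{nagell-lutz-theorem}). Because every point with $y = 0$ has order two, it has finite order; therefore the infinite-order point $P$ obtained above must satisfy $y_P \neq 0$.

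Finally, applying Lemma~\ref{lemma-congruent-numbers-as-elliptic} to $P$ with $y_P \neq 0$ concludes that $q$ is congruent. An alternative closing step would observe that $r \geq 1$ forces $\mathbf{E}[q]$ to be infinite and then invoke Corollary~\ref{infinitely-many-rational-points} directly. I expect the only point requiring care to be the second one---ruling out that a point of infinite order can sit on $y = 0$---yet this is immediate once the locus $y = 0$ is recognized as the $2$-torsion; the surrounding steps are purely formal.
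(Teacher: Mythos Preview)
Your proof is correct and follows essentially the same approach as the paper's, though you spell out the logic more carefully. The paper's proof is a two-line appeal to Lemma~\ref{lemma-congruent-numbers-as-elliptic} and Lemma~\ref{lemma-congruence-equivalent-elliptic-curve}, relying on the remark (made in the proof of Proposition~\ref{abelian-groups-associated-to-congruent-numbers}) that positive rank is equivalent to the existence of a rational point with $y\neq 0$; you make this equivalence explicit by identifying the locus $y=0$ with the $2$-torsion and noting that an infinite-order point cannot lie there.
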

\begin{proof}
In fact, if ${\rm rank}(\mathbf{E}(q))>0$ it means that there are rational points in the elliptic curve $y^2=x^3-q^2 x$, hence from Lemma \ref{lemma-congruent-numbers-as-elliptic} and Lemma \ref{lemma-congruence-equivalent-elliptic-curve} it follows that $q$ is representing a congruent number class.
\end{proof}

\begin{definition}[The congruent number problem]\label{definition-congruent-number-problem}
Given a positive square-free integer $n\in{}_{\square}\mathbb{N}$, there is a simple criterion to decide whether $n\in \mathbb{N}_{congr}$, (i.e., $n$ is a strong-congruent number) ?
\end{definition}
\begin{figure}[t]
 \includegraphics[height=6cm]{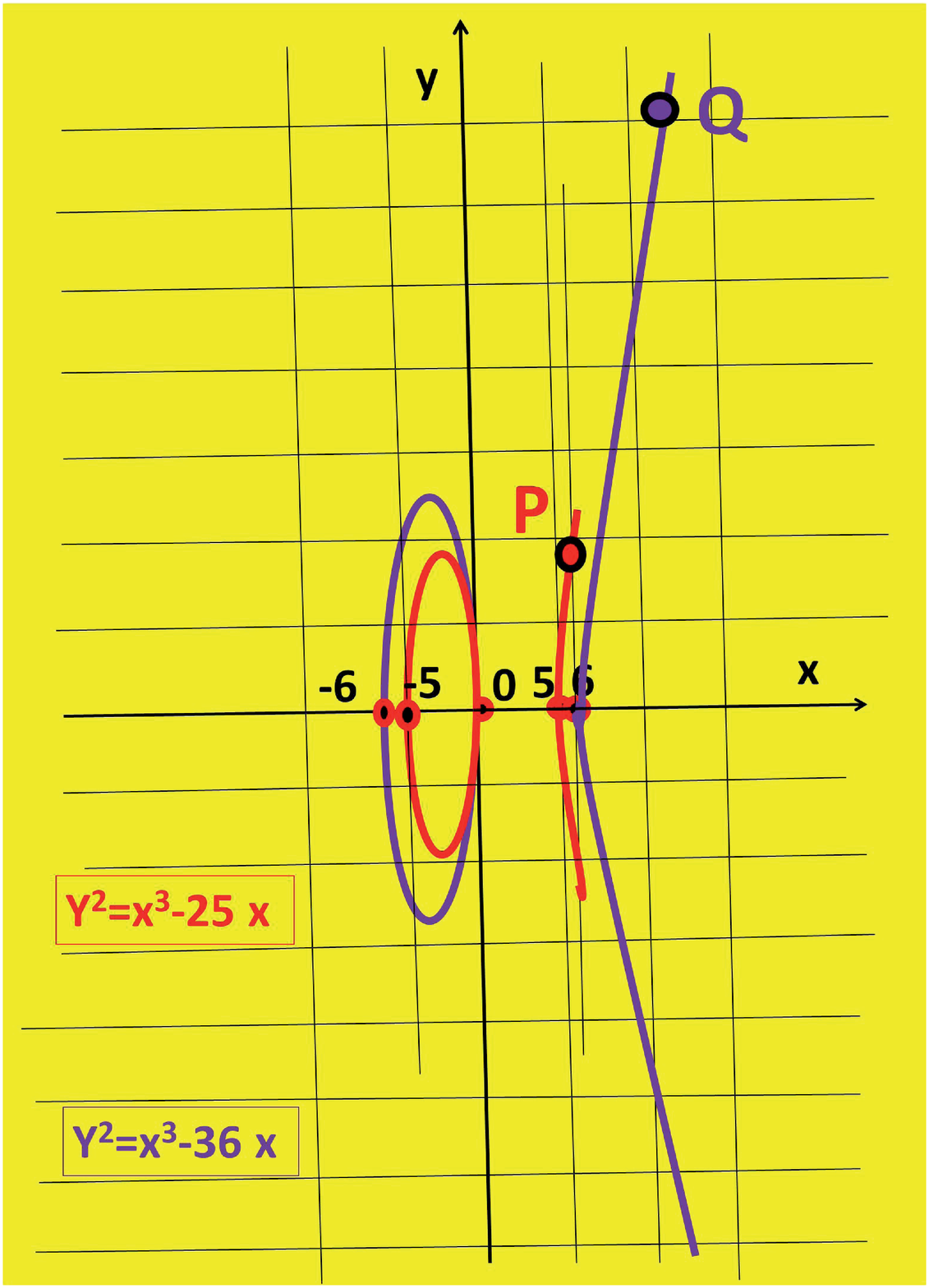}
\renewcommand{\figurename}{Fig.}
\caption{Representation of the elliptic curve (red-curve) $E[5]:\, y^2=x^3-25\, x$. $E[5]=E[5]_1\sqcup E[5]_2$ where $E[5]_1$ is the closed curve on the left-hand of the $y$-axis and and $E[5]_2$ is the other part on the right-hand of the $y$-axis. On $E[5]_1$ is reported the rational point $P=(\frac{25}{4},\frac{75}{8})$ corresponding to the congruent right triangle $(\frac{3}{2},\frac{20}3{},\frac{41}{6})$. The violet-curve represents the elliptic curve $E[6]:\, y^2=x^3-36\, x$. One has $E[5]\bigcap E[6]_1=O=(0,0)\in\mathbb{R}^2$. On $E[6]_1$ is reported the rational point $Q=(12,36)$ corresponding to the congruent right triangle $(3,4,5)$. For symmetry properties one can identify also six further rational points on $E[5]$ and $E[6]$ respectively. Some of these are also on the left-hand connected component. (For details see Tab. \ref{representation-elliptic-curve}.)}
\label{representation-elliptic-curve}
\end{figure}

\section{\bf The Birch Swinnerton-Dyer conjecture}\label{section-bs-d-conjecture}
\vskip 0.5cm
In this section we shall consider some important well-known results about congruent number problem and the related Birch Swinnerton-Dyer conjecture,(BS-D conjecture) that will be used in the next section. (For complementary information see also literature on this subject in References, and in particular look e.g., to the book by Koblitz \cite{KOBLITZ}.)

Let start with the following theorem.

\begin{theorem}[Modularity theorem or Taniyama-Shimura-Weil conjecture]\label{modularity-theorem}
Any elliptic curve $E$ over $\mathbb{Q}$ is a {\em modular curve}, i.e., there exists a surjective morphism $\varphi:X_0(N)\to E$, where $\varphi$ is a rational map with integer coefficients, and $X_0(N)$ is the {\em classical modular curve}, for some integer $N$.\footnote{$X_0(N)$ is a compact Riemann surface, defined by $X_0(N)=H^*/\Gamma_0(N)$, where $H^*=H\bigcup\mathbb{Q}\bigcup\{\infty\}$ is the extended complex upper-half plane $H\subset\mathbb{C}$, and $\Gamma_0(N)$ is the {\em congruence subgroup of level} $N$, of the {\em modular group} $SL(2;\mathbb{Z})$, defined by $\Gamma_0(N)=\scalebox{0.8}{$\left\{\left(
                      \begin{array}{cc}
                        a & b \\
                        c& d \\
                      \end{array}
                    \right)\, :\, c\equiv\, 0\,  \hbox{\rm mod}\, N\right\}$}$, for some positive $N\in\mathbb{N}$. Two elliptic curves $E$ and $E'$ are {\em isogenous} if there is a morphism of varieties, defined by a rational map between $E$ and $E'$, which is also a group homomorphism between the corresponding groups $\mathbf{E}$ and $\mathbf{E}'$ sending $\infty\in E$ to $\infty\in E'$, i.e., conserving identity elements. The isogenies with cyclic kernel of degree $N$, {\em (cyclic isogenies)}, correspond to points on $X_0(N)$: $\xymatrix{0\ar[r]&\mathbb{Z}_N\ar[r]&\mathbf{E}\ar[r]^{f}&\mathbf{E}'}$, $f\mapsto p\in X_0(N)$. When $X_0(N)$ has genus $1$, then $X_0(N)\cong E$, which will have the same $j$-invariant. For example $X_0(11)$ has $j=-2^{12}{11}^{-5}31^{3}$, and is isomorphic to the curve $y^2+y=x^3-x^2-10 x-20$. (For relations between Riemann surfaces and modular curves see also Appendix B.) This means that this elliptic curve can be parametrized by means of two functions $(x=x(z),y=y(z))$, where $x(z)$ and $y(z)$ are modular functions of weight $0$ and level $11$: in other words they are meromorphic, defined on the upper half-plane $\Im(z)>0$ and satisfy $(x(z)=x(\frac{az+b}{cz+d}),y(z)=y(\frac{az+b}{cz+d}))$, for all integers $a,\, b,\, c,\, d$ with $ad-bc=1$ and $11|c$.} This mapping is called a {\em modular parametrization of level $N$}. The {\em conductor} of $E$ is the smallest integer $N$ for which such a parametrization can be found.
\end{theorem}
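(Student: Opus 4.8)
The plan is to reduce the assertion to a statement about the two-dimensional $p$-adic Galois representations attached to $E$, and then to invoke the circle of modularity-lifting ideas introduced by Wiles and completed by Taylor--Wiles and Breuil--Conrad--Diamond--Taylor. First I would attach to $E/\mathbb{Q}$, for a prime $p$, the representation $\rho_{E,p}\colon \mathrm{Gal}(\overline{\mathbb{Q}}/\mathbb{Q})\to \mathrm{GL}_2(\mathbb{Z}_p)$ on the $p$-adic Tate module $T_p(E)=\varprojlim E[p^n]$, together with its reduction $\overline{\rho}_{E,p}\colon \mathrm{Gal}(\overline{\mathbb{Q}}/\mathbb{Q})\to\mathrm{GL}_2(\mathbb{F}_p)$. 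The target is precisely the statement that $\rho_{E,p}$ is \emph{modular}, i.e.\ isomorphic to the representation cut out by a weight-$2$ newform of level equal to the conductor $N$; by Eichler--Shimura and the theory of $X_0(N)$ this is equivalent to the existence of the surjection $\varphi\colon X_0(N)\to E$ asserted in the theorem.

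Next I would establish \emph{residual} modularity at the prime $p=3$. The key point is that $\mathrm{GL}_2(\mathbb{F}_3)$ is small enough that the projective image of $\overline{\rho}_{E,3}$ lands in a group for which the associated two-dimensional Artin representation is known to be automorphic: this is the Langlands--Tunnell theorem, which supplies modularity at weight $1$ whenever $\overline{\rho}_{E,3}$ is irreducible, after twisting and using the congruence between weight-$1$ and weight-$2$ forms modulo $3$. When $\overline{\rho}_{E,3}$ is \emph{reducible} this argument fails, and here I would run the \emph{$3$--$5$ switch}: choose an auxiliary elliptic curve $E'$ with $\overline{\rho}_{E',3}$ irreducible and modular and with $\overline{\rho}_{E',5}\cong\overline{\rho}_{E,5}$, transport modularity across this congruence to the residual representation $\overline{\rho}_{E,5}$, and then carry out the lifting argument at $p=5$ instead.

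The heart of the proof is then a \emph{modularity lifting theorem}: if $\overline{\rho}_{E,p}$ is modular and satisfies suitable local conditions, then $\rho_{E,p}$ itself is modular. I would formulate this as an identification $R\cong T$ between a universal deformation ring $R$, parametrising lifts of $\overline{\rho}_{E,p}$ with prescribed ramification and determinant, and a Hecke algebra $T$ acting on the relevant space of modular forms. One constructs a natural surjection $R\twoheadrightarrow T$ and proves it is an isomorphism by the Taylor--Wiles patching method: one produces a system of auxiliary level structures indexed by sets of Taylor--Wiles primes, patches the corresponding rings and modules into a power-series ring acting on a maximal Cohen--Macaulay module, and deduces via a numerical (complete-intersection) criterion that $R=T$ is a complete intersection with $T$ free over it. Specialising $R\cong T$ at the point corresponding to $\rho_{E,p}$ then shows that this representation arises from a modular form.

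The principal obstacle is exactly this $R=T$ step: controlling the deformation ring requires delicate local deformation theory at the ramified primes, especially the bad-reduction primes and the prime $p$ itself, and the patching argument demands the Taylor--Wiles numerical criterion together with the existence of enough auxiliary primes with the correct splitting behaviour. For an elliptic curve with additive (non-semistable) reduction at $3$ the local deformation problem at $3$ is genuinely harder, and Wiles's original semistable argument does not apply; removing that restriction is the content of the Breuil--Conrad--Diamond--Taylor refinement, which analyses the local deformation rings at $3$ via $p$-adic Hodge theory, through potentially Barsotti--Tate deformations, and establishes the remaining modularity-lifting statements. Granting these inputs, every $E/\mathbb{Q}$ is modular, which yields the desired surjection $\varphi\colon X_0(N)\to E$ and completes the proof.
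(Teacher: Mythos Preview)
Your sketch is correct in outline and in fact goes well beyond what the paper does: the paper's own ``proof'' of this theorem is not a proof at all but simply a citation, stating that the semistable case is due to Wiles and the general case to Breuil--Conrad--Diamond--Taylor, with a list of further references. Your proposal is a faithful high-level summary of exactly the strategy those papers carry out (Galois representations on the Tate module, residual modularity at $p=3$ via Langlands--Tunnell, the $3$--$5$ switch, the $R\cong T$ modularity-lifting theorems by Taylor--Wiles patching, and the BCDT local analysis at $3$ for the non-semistable case), so you are not taking a different route but rather unpacking the content of the references the paper invokes.
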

\begin{proof}
This theorem has been proved in a particular case by A. Wiles \cite{WILES1}. For the general case a proof has been given in \cite{BREUIL-CONRAD-DIAMOND-TAYLOR}. For related subjects see also \cite{DARMON, KOBLITZ, IWANIEC, SERRE, SCHOENEBERG, SHIMURA, TANIYAMA, WEIL1, WEIL2, WILES1, TATE}.
\end{proof}

\begin{definition}[The Hasse-Weil-function of elliptic curve and Hasse-Weil conjecture]\label{hasse-weil-function-elliptic-curve}
$\bullet$\hskip 2pt Given an elliptic curve $E$ over $\mathbb{Q}$ of conductor N, then $E$ has good reduction at all primes $p$ not dividing $N$, it has multiplicative reduction at the primes $p$ that exactly divide $N$ (i.e. such that $p$ divides $N$, but $p^2$ does not; this is written $p\, || N$), and it has additive reduction elsewhere (i.e. at the primes where $p^2$ divides $N$).
One defines {\em Hasse-Weil-function} of $E$, the function $Z_{E,\mathbb{Q}}(s)$, given in {\em(\ref{hasse-weil-function})}.\footnote{Taking into account the functional equation for $\zeta(s)$, $\zeta(s)=f(s)\zeta(1-s)$, where $f(s)=2^s\pi^{s-1}\sin(\frac{\pi s}{2})\Gamma(1-s)$, one can rewrite $Z_{E,\mathbb{Q}}(s)$ in the form
$$Z_{E,\mathbb{Q}}(s)=f(s)\zeta^2(1-s)\prod_{p\in P, p\nmid N,a_p=p+1}(1-a_pp^{-s}+p^{1-2s})\prod_{p\in P, p||  N,a_p=\pm 1}(1-a_pp^{-s}).$$}

\begin{equation}\label{hasse-weil-function}
   Z_{E,\mathbb{Q}}(s)=\frac{\zeta(s)\, \zeta(s-1)}{L(E,s)}, \, \left\{
   \begin{array}{ll}
     L(E,s)&=\prod_{p\in P} L_p(E,s)^{-1}\\
     &\\
     L_p(E,s)&=\scalebox{0.8}{$\left\{\begin{array}{ll}
     (1-a_pp^{-s}+p^{1-2s})& \hbox{\rm if $p\nmid N$} \\
     (1-a_pp^{-s})& \hbox{\rm if $p\, || N$} \\
     1& \hbox{\rm if $p^2| N$} \\
     \end{array}
     \right.$} \end{array}\right.
\end{equation}
In {\em(\ref{hasse-weil-function})} $P$ is the set of primes, $\zeta(s)$ is the usual Riemann zeta function and $L(E,s)$ is called the {\em$L$-function} of $E/\mathbb{Q}$. Furthermore, the coefficients $a_p$ are given in {\em(\ref{hasse-weil-function-a})}.
\begin{equation}\label{hasse-weil-function-a}
a_p=\left\{\begin{array}{ll}
             p+1& \left\{\begin{array}{l}\hbox{\rm in the case of good reduction:} \\
             \hbox{\rm $p+1=$ number of points of $E$ mod $p$.} \\
             \end{array}
              \right.\\
             \pm 1& \left\{\begin{array}{l}
                            \hbox{\rm in the case of multiplicative reduction:}\\
                            \hbox{\rm $\pm$ if $E$ has split or non-split multiplicative reduction at $p$.}\\
                           \end{array}
              \right.\\
           \end{array}\right.
\end{equation}

$\bullet$\hskip 2pt {\em The Hasse�Weil conjecture} states that the Hasse�Weil zeta function should extend to a meromorphic function for all complex s, and should satisfy a functional equation similar to that of the Riemann zeta function. For elliptic curves over the rational numbers, the Hasse-Weil conjecture follows from Theorem \ref{modularity-theorem}.
\end{definition}

\begin{conj}[The Birch and Swinnerton-Dyer conjecture \cite{BIRCH-SWINNERTON-DYER}]\label{definition-bs-d-conjecture}
The rank $k$ of the abelian group $\mathbf{E}[K]$ of the elliptic curve $E$ over a number field $K$, is the order of the zero of the Hasse-Weil-function $L(E,s)$ at $s=1$: $$L(E,s)^{(r)}|_{s=1}=0,\hskip 3pt r<k,\hskip 5pt L(E,s)^{(k)}|_{s=1}\not=0.$$

Furthermore, the non-zero coefficient of the Taylor expansion of $L(E,s)$ at $s=1$, is given by more refined arithmetic data attached to $E$ over $K$ (Wiles 2006).
\end{conj}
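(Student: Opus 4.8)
The plan is to split the statement into its two assertions — the equality $\mathrm{ord}_{s=1}L(E,s)=\mathrm{rank}\,\mathbf{E}[K]$ of analytic and algebraic rank, and the refined formula for the leading Taylor coefficient — and to attack each through the modularity already secured in Theorem \ref{modularity-theorem}. I would restrict first to $K=\mathbb{Q}$, where modularity furnishes a surjection $\varphi\colon X_0(N)\to E$ and, as recorded in Definition \ref{hasse-weil-function-elliptic-curve}, the analytic continuation and functional equation of $L(E,s)$; in particular the sign $w=\pm1$ of the functional equation already pins down the parity of $\mathrm{ord}_{s=1}L(E,s)$. The first concrete step is to establish the rank equality in the two accessible regimes, analytic rank $0$ and $1$.

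For analytic rank $1$ I would pull back the modular parametrization to produce a Heegner point $P_K\in\mathbf{E}[K]$ attached to an auxiliary imaginary quadratic field, and invoke the height computation relating its canonical height to $L'(E,1)$ (Gross--Zagier); nonvanishing of $L'(E,1)$ then makes $P_K$ of infinite order, so $\mathrm{rank}\ge1$. An Euler-system descent along the Heegner points (Kolyvagin) would bound the rank above by the analytic rank and simultaneously force finiteness of the relevant Selmer and Tate--Shafarevich groups, giving $\mathrm{rank}=1$. For analytic rank $0$ the same Euler system, anchored at a nonvanishing $L(E,1)$, yields $\mathbf{E}[K]$ finite, i.e. $\mathrm{rank}=0$. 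Specializing to the congruent-number curves $E[n]\colon y^2=x^3-n^2x$ that drive this paper, the torsion computation of Example \ref{examples-of-congruent-numbers} pins $\mathbf{T}[n]$ exactly, so that ``$\mathrm{rank}>0$'' is literally ``$|\mathbf{E}[n]|>4$'', and by Corollary \ref{infinitely-many-rational-points} this is equivalent to $n$ being congruent — the bridge back to the bordism-theoretic sequence (\ref{introduction-short-exact-sequence}).

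The leading-coefficient refinement I would address only after the rank equality. Writing the conjectural formula
\[
\frac{L^{(k)}(E,1)}{k!}=\frac{\#\mathrm{Sha}(E/K)\cdot\Omega_E\cdot R_E\cdot\prod_p c_p}{(\#\mathbf{T}[K])^2},
\]
with $\Omega_E$ the real period and $c_p$ the Tamagawa numbers read off the N\'eron model, $R_E$ the regulator of the height pairing, and torsion controlled by Nagell--Lutz and Mazur (Propositions \ref{nagell-lutz-theorem}, \ref{mazur-theorem-elliptic-curves}), I would first check each factor is well defined and then match it against the explicit data for $E[n]$, where the $j$-invariant $1728$ and the complex multiplication by $\mathbb{Z}[i]$ make the periods computable through Gamma-values.

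The main obstacle is twofold and genuinely open in general. First, the rank equality for analytic rank $\ge2$ lies outside the reach of the Gross--Zagier/Kolyvagin machinery, so the argument above controls only ranks $0$ and $1$; second, the leading-coefficient formula presupposes finiteness of $\mathrm{Sha}(E/K)$, unknown beyond low rank. My strategy for circumventing this inside the present framework is the indirect route of Remark \ref{main-corollary}: rather than prove BSD for every $E$, I would establish the exact sequence (\ref{introduction-short-exact-sequence}) so that $\ker(L_\bullet)=\mathbb{N}_{congr}$, and deduce that Tunnell's arithmetic criterion is \emph{sufficient} — which, combined with the Coates--Wiles and Tunnell results invoked in this section, is equivalent to the rank-zero/positive-rank dichotomy of BSD for the subfamily $E[n]$ that the bordism groups see. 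Extending from this subfamily, and from $K=\mathbb{Q}$ to an arbitrary number field $K$ where modularity itself is not available in full, is exactly where I expect the argument to resist completion.
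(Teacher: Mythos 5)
The statement you were asked to prove is labelled a \emph{Conjecture} in the paper, and the paper contains no proof of it: Conjecture \ref{definition-bs-d-conjecture} is stated with attribution to Birch and Swinnerton-Dyer and left open. The only related material is Remark \ref{main-corollary}, which argues that, since the paper's Theorem \ref{main-theorem} reproduces a known \emph{consequence} of BSD (sufficiency of Tunnell's criterion for the curves $y^2=x^3-n^2x$), one has ``good chances to argue'' that the conjecture is true. That is a plausibility claim, not a proof; it concerns only the rank-zero versus positive-rank dichotomy for one CM family, not the order-of-vanishing equality for general $E/K$, and not the leading-coefficient formula. So there is no proof in the paper against which your attempt could be matched.

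Judged as a proof of the statement, your proposal has a genuine and (at present) irreparable gap, which you yourself identify: Gross--Zagier plus Kolyvagin control only analytic ranks $0$ and $1$ over $\mathbb{Q}$, nothing handles analytic rank $\ge 2$, finiteness of the Tate--Shafarevich group is unknown beyond low rank, and over a general number field $K$ even modularity is unavailable in the form you need. The fallback you offer --- the paper's indirect route through exactness of (\ref{introduction-short-exact-sequence}), i.e. $\ker(L_\bullet)=\mathbb{N}_{congr}$ --- cannot close this gap, for two reasons. First, the implication runs the wrong way: sufficiency of Tunnell's criterion is a \emph{consequence} of (the rank part of) BSD for the curves $E[n]$; establishing it would yield neither the order-of-vanishing statement for arbitrary $E/K$ nor the leading-term formula, so at best it is evidence, exactly as Remark \ref{main-corollary} concedes. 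Second, the paper's own argument for $\ker(L_\bullet)=\mathbb{N}_{congr}$ (Lemma \ref{main-lemma} inside the proof of Theorem \ref{main-theorem}) is fallacious: it deduces equality of the two sets from the inclusion $\mathbb{N}_{congr}\subseteq\ker(L_\bullet)$ together with $\sharp(\ker(L_\bullet))=\sharp(\mathbb{N}_{congr})=\aleph_0$ via a filtration and Cantor--Schroeder--Bernstein argument, but a countably infinite set routinely contains proper countably infinite subsets (the even integers inside $\mathbb{Z}$), so inclusion plus equal cardinality proves nothing. Your survey of the known partial results (modularity, Gross--Zagier, Kolyvagin, Coates--Wiles) is accurate, and your closing honesty is the correct assessment: the statement remains an open conjecture, and neither your route nor the paper's establishes it.
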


Whether Conjecture \ref{definition-bs-d-conjecture} is true one can solve the congruent number problem in Definition \ref{definition-congruent-number-problem}. In fact, one has the following proposition.\footnote{Previous results of the Tunnell's theorem, were ones by Stephens  proving that the Conjecture \ref{definition-bs-d-conjecture} implies any postive integer $n=5,\, 6,\, 7\, {\rm mod}\, 8$ is a congruent number \cite{STEPHENS}.}

\begin{theorem}[Tunnell's theorem \cite{TUNNELL}]\label{tunnell-theorem}
For a given square-free integer $n\in{}_{\square}\mathbb{N}$, define the associated integers given in {\em(\ref{associated-integers})}.
\begin{equation}\label{associated-integers}
\left\{
\begin{array}{l}
  A_n=\sharp\{(x,y,z)\in\mathbb{Z}^3\, |\, n=2x^2+y^2+32 z^2\}\\
 B_n=\sharp\{(x,y,z)\in\mathbb{Z}^3\, |\, n=2x^2+2y^2+8 z^2\}\\
C_n=\sharp\{(x,y,z)\in\mathbb{Z}^3\, |\, n=8x^2+2y^2+64 z^2\}\\
D_n=\sharp\{(x,y,z)\in\mathbb{Z}^3\, |\, n=8x^2+2y^2+16 z^2\}.\\
\end{array}
\right.
\end{equation}
We get the implications {\em(\ref{implications})}.
\begin{equation}\label{implications}
  \framebox{$\begin{array}{cccc}
    \hbox{\rm If $n\in\mathbb{N}_{congr}$,}&n=2m+1,&\Rightarrow&2A_n=B_n.\\
\hbox{\rm If $n\in\mathbb{N}_{congr}$,}& n=2m,&\Rightarrow&2C_n=D_n.\\
\end{array}$}
\end{equation}

$\bullet$\hskip 2pt Conversely if the Birch-Swinnerton-Dyer conjecture holds for elliptic curves of the form $y^2=x^3-n^2 x$, $n\in{}_{\square}\mathbb{N}$, then the framed equalities on the right in {\em(\ref{implications})}, are sufficient to conclude that $n\in\mathbb{N}_{congr}$.
\end{theorem}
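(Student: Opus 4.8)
The plan is to route the statement through the analytic behaviour of the $L$-function $L(E_n,s)$ of the congruent-number curve $E_n:\, y^2=x^3-n^2x$ at the central point $s=1$, and to translate the vanishing of $L(E_n,1)$ into the combinatorics of the quadratic forms in (\ref{associated-integers}) by means of a half-integral weight modular form. First I would fix, via the Modularity Theorem (Theorem \ref{modularity-theorem}), the weight-$2$ newform whose Mellin transform is $L(E_n,s)$; here $E_n$ has complex multiplication by $\Z[i]$ and is the quadratic twist by $n$ of the fixed curve $E_1:\, y^2=x^3-x$, so that the whole family $\{L(E_n,s)\}_{n}$ is a family of quadratic twists sharing a common Shimura lift. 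The conductor is $32n^2$ for odd $n$ and $16n^2$ for even $n$, which pins down the level on which the relevant modular objects live and explains the parity split in (\ref{implications}).

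Second, I would realise the counting functions as Fourier coefficients of theta series. Each ternary form $Q$ appearing in (\ref{associated-integers}) has an attached theta series $\theta_Q(z)=\sum_{(x,y,z)\in\Z^3}q^{Q(x,y,z)}$, $q=e^{2\pi i z}$, which is a modular form of weight $3/2$ on a suitable $\Gamma_0(N)$ with character. The decisive point is that an explicit integral linear combination of these theta series --- built from $\theta_A,\theta_B$ in the odd case and from $\theta_C,\theta_D$ in the even case --- is a \emph{cusp} form $g$ of weight $3/2$ whose $n$-th Fourier coefficient $c(n)$ equals $B_n-2A_n$ (resp.\ $D_n-2C_n$) for squarefree $n$ in the allowed residue class. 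With this, the framed equalities in (\ref{implications}) say precisely that $c(n)=0$.

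Third --- the technical heart --- I would invoke the Shimura correspondence together with Waldspurger's formula to obtain, for squarefree $n$, an identity of the shape
\begin{equation*}
c(n)^2=\kappa\,\frac{\sqrt{n}}{\Omega}\,L(E_n,1),\qquad \kappa\neq 0,
\end{equation*}
where $\Omega$ is the real period of $E_n$ and $\kappa$ an explicit nonzero rational constant. This forces the equivalence $c(n)=0\Longleftrightarrow L(E_n,1)=0$, i.e.\ the counting equality holds iff the central $L$-value vanishes. The hard part is exactly here: one must verify that $g$ is the specific weight-$3/2$ form whose Shimura lift is the newform attached to $E_1$, and one must control the constant $\kappa$ so that it is genuinely nonzero in the normalisation chosen. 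This is the content of Tunnell's computation and rests on the full machinery of modular forms of half-integral weight; it is the step I expect to be the main obstacle, the rest being formal by comparison.

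Finally I would close the two implications using the rank. For the forward (unconditional) direction: if $n\in\N_{congr}$, then by Theorem \ref{criterion-congruent-number} together with Corollary \ref{infinitely-many-rational-points} the curve $E_n$ has positive rank; since $E_n$ has complex multiplication, the Coates--Wiles theorem (Theorem \ref{coates-wiles-theorem}) yields $L(E_n,1)=0$, hence $c(n)=0$ and the framed equalities $2A_n=B_n$ (odd) and $2C_n=D_n$ (even) follow. For the converse: assuming the Birch and Swinnerton-Dyer conjecture (Conjecture \ref{definition-bs-d-conjecture}), the vanishing $c(n)=0$ gives $L(E_n,1)=0$, so the order of vanishing of $L(E_n,s)$ at $s=1$ is at least one, whence $\RANK(\mathbf{E}[n])\ge 1>0$ and $n\in\N_{congr}$ again by Theorem \ref{criterion-congruent-number}. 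This is the only place where the conjecture is used, which matches the statement's hypothesis that the Birch--Swinnerton-Dyer conjecture holds for the curves $y^2=x^3-n^2x$.
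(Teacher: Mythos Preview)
Your outline is a faithful sketch of Tunnell's original argument: theta series of weight $3/2$ whose coefficients are the representation numbers in (\ref{associated-integers}), the Shimura correspondence and Waldspurger's formula to link the squared Fourier coefficient to the central $L$-value of the twist $E_n$, Coates--Wiles for the unconditional forward direction, and BSD for the converse. As a proof plan it is correct; the genuine work, as you note, lies in identifying the specific cusp form $g$ and checking that the proportionality constant is nonzero, which is exactly what Tunnell carried out.

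The comparison you are asked to make is, however, somewhat vacuous here: the paper does \emph{not} supply its own proof of this statement. Theorem \ref{tunnell-theorem} is stated with a citation to \cite{TUNNELL} and immediately followed by an example and a remark; there is no proof environment attached to it in Section \ref{section-bs-d-conjecture}. The paper treats Tunnell's theorem as an imported result used later (in Lemma \ref{elleiptic-congruent-bordism-groups-and-birch-swinnerton-dyer-conjecture} and Theorem \ref{main-theorem}) rather than something it proves. So your proposal is not an alternative route to the paper's proof; it is the only proof in sight, and it is the standard one.
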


\begin{example}
With respect to Tab. \ref{table-examples-of-strong-congruent-numbers} we can verify that Tunnell's theorem works well.\footnote{Let us underline that Tunnell's theorem works only for square-free integers $n\in{}_{\square}\mathbb{N}$.} For example we get the following:

$\bullet$\hskip 2pt $(n=1)$. $A_1=B_1=2$, hence $2A_1\not=B_1$. {\em[$1$ is not a congruent number.]}

$\bullet$\hskip 2pt $(n=3)$. $A_3=B_3=4$, hence $2A_3\not=B_3$. {\em [$3$ is not a congruent number.]}

$\bullet$\hskip 2pt $(n=5)$. $A_5=B_5=0$, hence $2A_5=B_5$. {\em[$5$ is a strong-congruent number.]}

$\bullet$\hskip 2pt $(n=10)$. $C_{10}=D_{10}=4$, hence $2C_{10}\not=D_{10}$. {\em[$10$ is not a congruent number.]}

$\bullet$\hskip 2pt $(n=13)$. $A_{13}=B_{13}=0$, hence $2A_{13}=B_{13}$. {\em[$13$ is a strong-congruent number.]}

$\bullet$\hskip 2pt $(n=65)$. $A_{65}=B_{65}=0$, hence $2A_{65}=B_{65}$. {\em[$65$ is a strong-congruent number.]}
\end{example}

\begin{remark}[The relation between the congruent number problem and the BS-D conjecture]\label{the-relation-emphasized}
A way to solve the congruent number problem is to solve the Conjecture \ref{definition-bs-d-conjecture}. Therefore the Tunnell's theorem emphasizes the importance of the BS-D conjecture.
\end{remark}

\begin{theorem}[Coates-Wiles theorem \cite{COATES-WILES}]\label{coates-wiles-theorem}
$\bullet$\hskip 2pt To each elliptic curve  $E[n]:\, y^2=x^3-n^2 x$, $n\in\mathbb{N}$, there is associated a number $L(E[n])$.

$\bullet$\hskip 2pt If $E[n]$ has infinitely many rational points, then $L(E[n])=0$.

$\bullet$\hskip 2pt If $L(E[n])$ is not zero, then $n$ cannot be a strong-congruent number.

$\bullet$\hskip 2pt{\em(Tunnell's expression for $L(E[n])$.)} For any $n\in {}_{\square}\mathbb{N}$ one can write
\begin{equation}\label{tunnell-expression}
    L(E[n])=\left\{\begin{array}{l}
                     C\cdot(A_n-\frac{B_n}{2}) \mbox{ if $n$ is odd}\\ [.05in]
                     C\cdot(C_n-\frac{D_n}{2}) \mbox{ if $n$ is even}\\
                   \end{array}
\right.
\end{equation}
where $C$ is a non-zero number, and $A_n$, $B_n$, $C_n$, $D_n$ are defined in Theorem \ref{tunnell-theorem}.

$\bullet$\hskip 2pt If the BS-D conjecture is true the condition $L(E[n])=0$ is also sufficient to state that $n$ is congruent, (i.e., $n\in\mathbb{N}_{cong}$, hence $E[n]$ has infinitely many rational points).

\end{theorem}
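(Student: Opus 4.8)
The plan is to assemble the five bullets from three ingredients already available in the paper: the $L$-function of Definition \ref{hasse-weil-function-elliptic-curve}, the Mordell--Weil structure of Proposition \ref{abelian-groups-associated-to-congruent-numbers}, and the cited deep theorems of Coates--Wiles and Tunnell. For the first bullet I would simply \emph{define} $L(E[n]):=L(E[n],s)|_{s=1}$, the value at $s=1$ of the Hasse--Weil $L$-function attached to $E[n]$ in (\ref{hasse-weil-function}). The point needing justification is that this value makes sense: by the Modularity Theorem \ref{modularity-theorem} (and the resulting Hasse--Weil property noted after Definition \ref{hasse-weil-function-elliptic-curve}), $L(E[n],s)$ continues to an entire function of $s$, so $s=1$ is a regular point and $L(E[n])$ is well defined. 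I would record here the structural fact that drives everything: each $E[n]:\ y^2=x^3-n^2x$ has $j$-invariant $1728$ and hence complex multiplication by the Gaussian integers $\Z[i]$, which is precisely the hypothesis under which the Coates--Wiles machinery operates.

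For the second bullet I would invoke the Coates--Wiles theorem \cite{COATES-WILES} directly. If $E[n]$ has infinitely many rational points, then by Proposition \ref{abelian-groups-associated-to-congruent-numbers} the finitely generated group $\mathbf{E}[n]$ has rank $r>0$. The Coates--Wiles theorem asserts that for an elliptic curve over $\Q$ with complex multiplication whose group of rational points is infinite, the $L$-function vanishes at $s=1$; applying it to $E[n]$ (CM by $\Z[i]$) yields $L(E[n])=0$. The third bullet is then nothing more than the contrapositive: if $L(E[n])\neq 0$ then $\mathbf{E}[n]$ is finite, so by Corollary \ref{infinitely-many-rational-points} (equivalently Theorem \ref{criterion-congruent-number}) $n$ cannot be a strong-congruent number.

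The fourth bullet, Tunnell's explicit expression (\ref{tunnell-expression}), is the arithmetic heart, and I would derive it from Theorem \ref{tunnell-theorem} together with Tunnell's half-integral weight computation \cite{TUNNELL}. The mechanism is that $L(E[n],1)$ equals, up to the fixed nonzero period $C$, a simple expression in the $n$-th Fourier coefficient of an explicit modular form of weight $\tfrac{3}{2}$, and that this coefficient is exactly the theta-series difference counting representations by the ternary forms defining $A_n,B_n,C_n,D_n$; this produces $A_n-\tfrac{B_n}{2}$ for $n$ odd and $C_n-\tfrac{D_n}{2}$ for $n$ even, and makes the vanishing $L(E[n])=0$ equivalent to the framed equalities $2A_n=B_n$, $2C_n=D_n$ of (\ref{implications}). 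Finally, for the fifth bullet I would feed the Birch--Swinnerton-Dyer Conjecture \ref{definition-bs-d-conjecture} in the reverse direction: if $L(E[n])=L(E[n],1)=0$ then the order of vanishing at $s=1$ is at least one, so by the conjecture $\mathrm{rank}(\mathbf{E}[n])\geq 1$, whence $E[n]$ has infinitely many rational points and $n\in\N_{congr}$ by Corollary \ref{infinitely-many-rational-points}.

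The main obstacle is that the two central bullets are not elementary consequences of the earlier lemmas but the genuinely deep theorems themselves: the Coates--Wiles vanishing statement rests on Iwasawa-theoretic input for CM curves, and Tunnell's identity rests on the Shimura correspondence and Waldspurger's formula relating central $L$-values to half-integral weight Fourier coefficients. My proof would therefore be an assembly that correctly quotes these results and verifies their hypotheses---chiefly the CM-by-$\Z[i]$ property and the square-freeness of $n$ required by Tunnell---rather than a reproof of either.
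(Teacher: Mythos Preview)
The paper actually gives no proof of this theorem at all: it is stated as a cited result (with references to \cite{COATES-WILES} and implicitly \cite{TUNNELL}) and the text moves directly to Section \ref{section-elliptic-bordism-groups}. Your proposal is therefore not competing with a proof in the paper but rather supplying the explanatory assembly that the paper omits. What you wrote is correct and appropriately modest: you identify $L(E[n])$ as $L(E[n],1)$, check well-definedness via modularity, verify the CM-by-$\mathbb{Z}[i]$ hypothesis needed for Coates--Wiles, read off the contrapositive for the third bullet, attribute the explicit formula to Tunnell's half-integral-weight/Waldspurger computation, and close with the BS-D implication. You are also right to flag that the second and fourth bullets are the deep inputs and cannot be proved from the paper's internal lemmas.
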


\section{\bf Elliptic and Congruent Bordism Groups}\label{section-elliptic-bordism-groups}
\vskip 0.5cm
In this section we shall relate the congruent numbers problem and the related BS-D conjecture to suitable bordism groups and to homotopies between elliptic curve inducing isomorphisms between such bordism groups. These algebraic topologic tools will give us the way to obtain, via the Tunnell's theorem, a workable criterion that in some finite steps allows us to know if a square-free integer is a strong-congruent number. Furthermore, taking into account the realtion between Tunnell's theorem and BS-D conjecture, we get as a by-product an indirect way to consider the BS-D conjecture true.

\begin{figure}[t]
(A) \includegraphics[height=3cm]{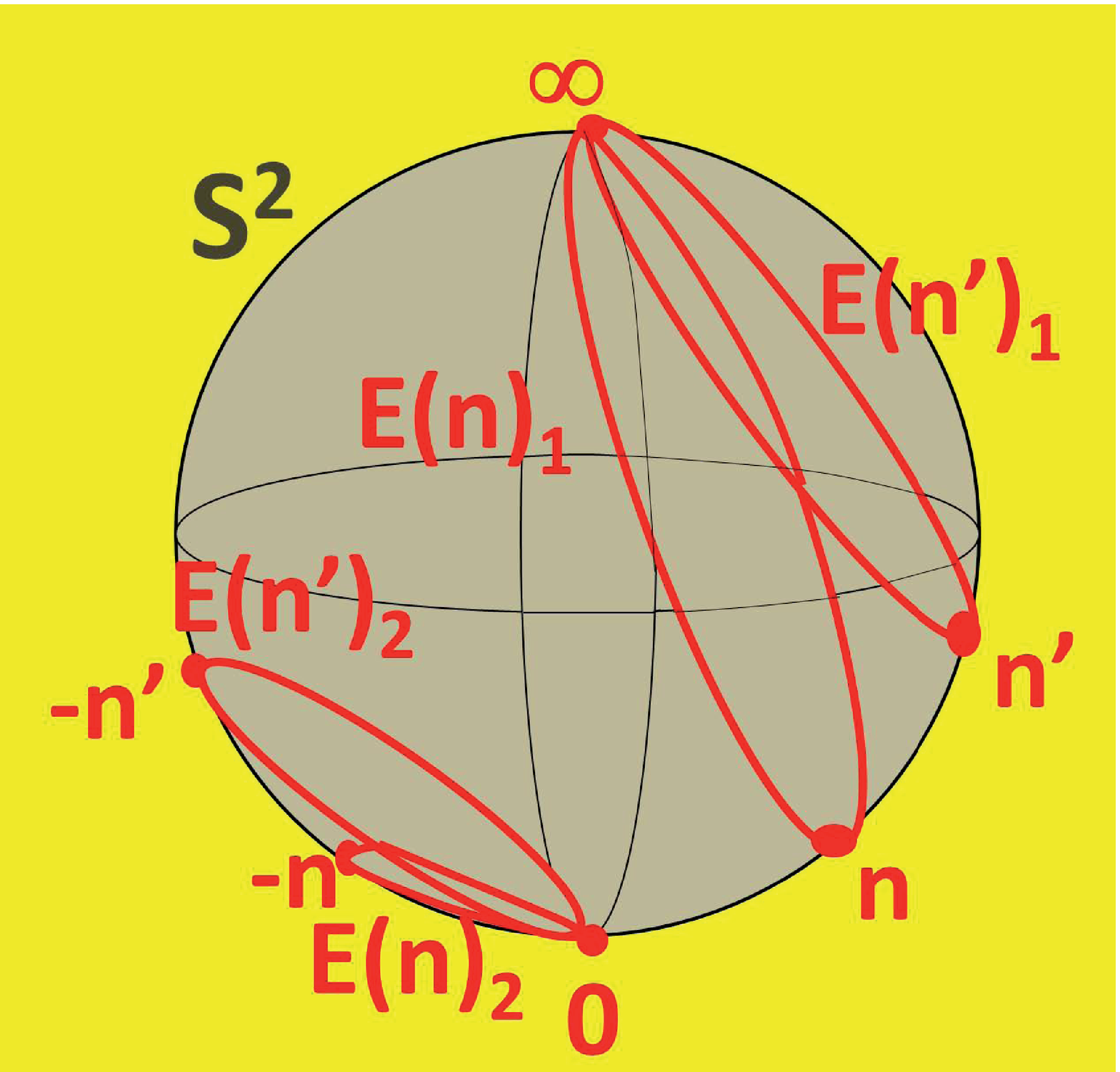} (B) \includegraphics[height=3cm]{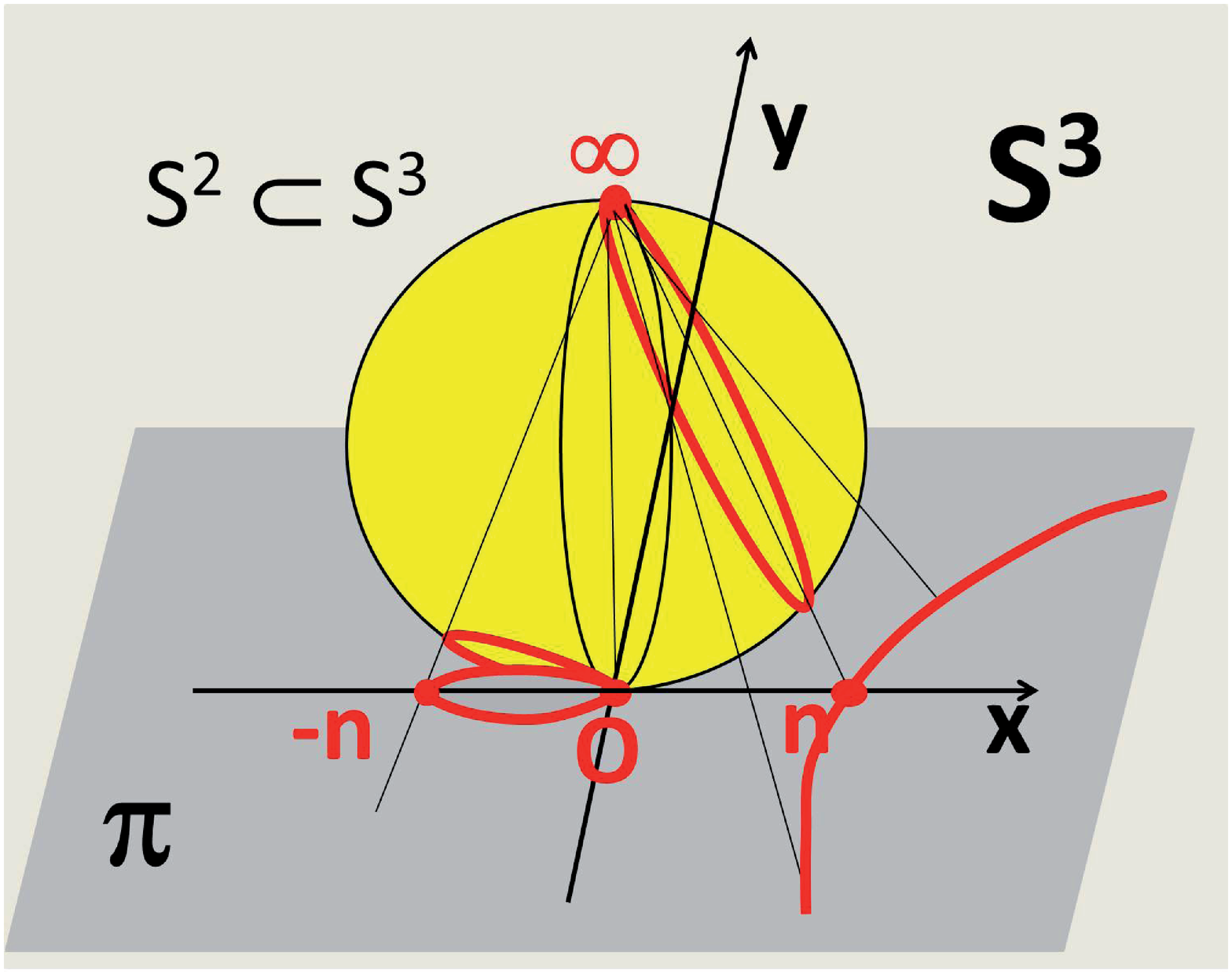} (C) \includegraphics[height=3cm]{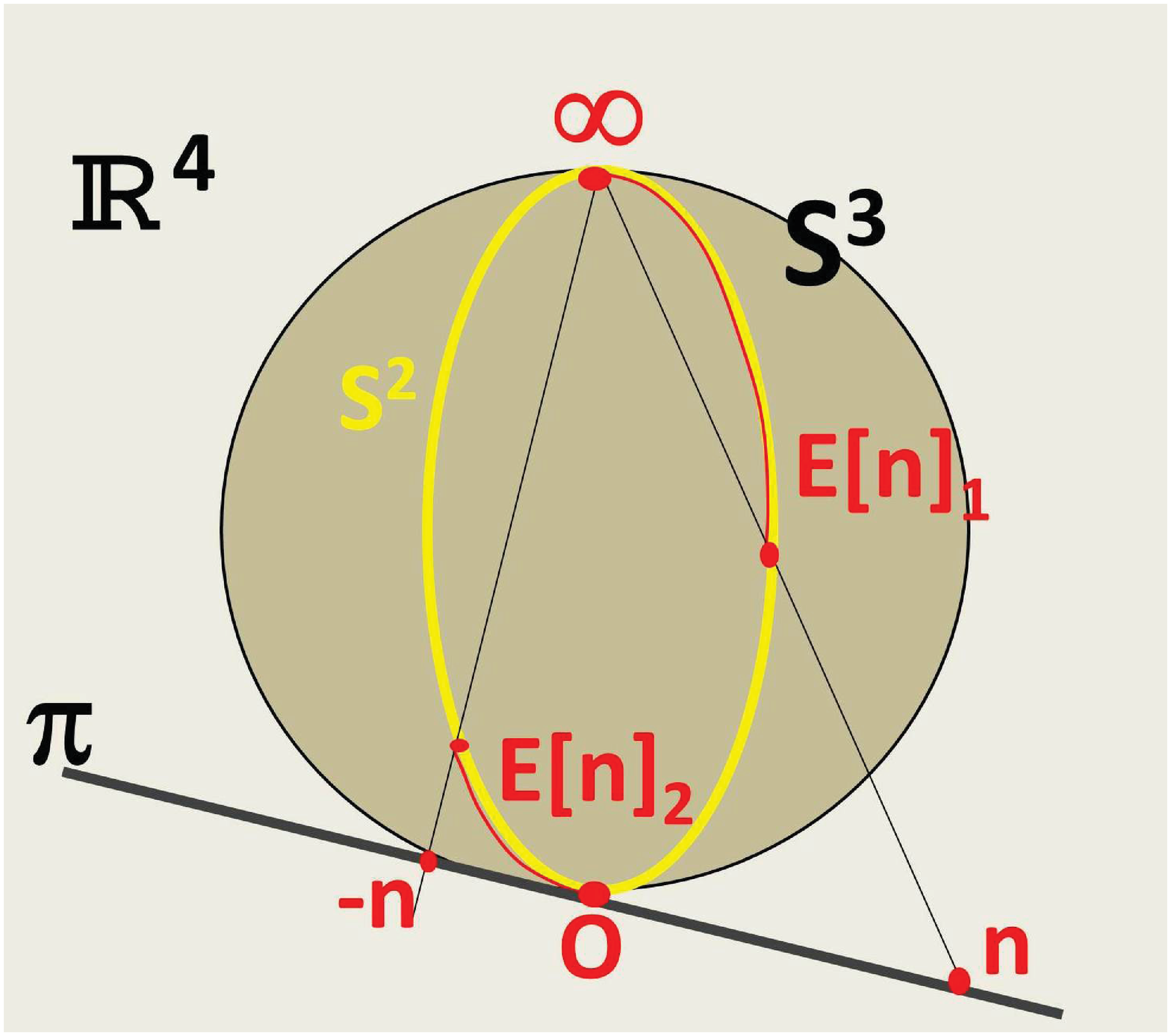}
\renewcommand{\figurename}{Fig.}
\caption{Representation of some compactified elliptic curves $E[n]:\, y^2=x^3-n^2\, x$. $E[n]=E[n]_1\sqcup E[n]_2$ where $E[n]_1$ is the closed curve on the right-hand of the (compactified) $y$-axis and and $E[n]_2$ is the other part on the left-hand of the $y$-axis. Similarly for $E[n']=E[n']_1\sqcup E[n']_2$. When $n,\, n'\in\mathbb{N}_{congr}$ we can talk about congruent elliptic curves. The points $n$ and $n'$ are placed on the (compactified) $x$-axis. $S^2=\mathbb{R}^2\bigcup\{\infty\}$. In the figure (B) is represented the relation between the compactified elliptic curve on $S^2\subset S^3\cong G^+_{1,4}(\mathbb{R}^4)\subset\mathbb{R}^4$, and its stereographic projection, of equation $y^2-x^3+n^2 x=0$, on the plane $\pi$, tangent at the south pole $O$ of $S^2$. In figure (C) is represented $S^3\subset\mathbb{R}^4$, and $S^2\subset S^3$, with the two components $E[n]_1$, $E[n]_2$ of an elliptic curve represented in read. The plane $\pi$, tangent to $S^2$, at the south pole $O$, is represented by a grey strong straight-line. There the steoreographic projection from the north pole $\infty$, identifies the two components of the plane curve, besides the point $-n$ and $n$. [Here $G^+_{1,4}(\mathbb{R}^4)$ is the oriented Grassman manifold of oriented $1$-dimensional planes in $\mathbb{R}^4$.]}
\label{representation-compactified-elliptic-curve}
\end{figure}

\begin{definition}[$n$-Elliptic bordism groups]\label{n-elliptic-bordism-groups}
Let $n\in\mathbb{N}$. We say that two points $P,\, Q\in \mathbb{R}^2$ {\em $n$-elliptic bord} if there exists an elliptic curve $E[n]:\, y^2=x^3-n^2x$, such that $P,\, Q\in E[n]$, and $P\sqcup Q=\partial \Gamma$, $\Gamma\subset E[n]$. Since the $0$-bordism group $\Omega_0(\mathbb{R}^2)\cong\mathbb{Z}_2$, it follows that if $P\in[Q]\in\Omega_0(\mathbb{R}^2)\cong\mathbb{Z}_2$, it does not necessitate that $P\sqcup Q=\partial\Gamma$, for some $\Gamma\subset E[n]$. Therefore the $n$-elliptic bordism is a new equivalence relation in $\mathbb{R}^2$ and we denotes by $\Omega_{E[n]}$ the corresponding set of equivalence classes, that we call the {\em $n$-elliptic bordism group}.
\end{definition}

\begin{theorem}\label{proposition-isomorphic-elliptic-bordism-groups}
One has the canonical isomorphism
\begin{equation}\label{isomorphic-elliptic-bordism-groups}
\Omega_{E[n]}\cong \Omega_{E[s^2n]},\, \forall s\in\mathbb{N}.
\end{equation}
In particular, two integers $n,\, \bar n\in\mathbb{N}$, belonging to the same equivalence class of congruent numbers, identify isomorphic elliptic bordism groups.
\end{theorem}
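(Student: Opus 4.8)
The plan is to deduce the statement from the explicit diffeomorphism constructed in Lemma \ref{lemma-congruence-equivalent-elliptic-curve} together with the functoriality of bordism under diffeomorphisms. The crucial observation is that $n$-elliptic bordism is defined entirely in terms of smooth arcs lying on $E[n]$ and their boundaries, so any diffeomorphism of $\mathbb{R}^2$ carrying $E[n]$ onto $E[s^2n]$ must carry the whole $n$-elliptic bordism structure onto the $s^2n$-elliptic one. Note that it is the diffeomorphism property, not conformality, that matters here; indeed Lemma \ref{lemma-congruence-equivalent-elliptic-curve} stresses precisely that the relevant map $\varphi$ is \emph{not} conformal.

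First I would recall from Lemma \ref{lemma-congruence-equivalent-elliptic-curve} that the map $\varphi:\mathbb{R}^2\to\mathbb{R}^2$, $\varphi(x,y)=(s^2x,\,s^3y)$, has Jacobian $j(\varphi)=s^5\neq 0$ and restricts to a diffeomorphism $\varphi|_{E[n]}:E[n]\to E[s^2n]$, since substituting $(s^2x,s^3y)$ into $\bar y^2-\bar x^3+(s^2n)^2\bar x=0$ returns $s^6(y^2-x^3+n^2x)=0$. Next I would transport the bordism relation: a compact $1$-dimensional submanifold-with-boundary $\Gamma\subset E[n]$ is sent by $\varphi$ to such a submanifold $\varphi(\Gamma)\subset E[s^2n]$, and since a diffeomorphism commutes with the boundary operator, $\partial(\varphi(\Gamma))=\varphi(\partial\Gamma)$. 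Hence $P\sqcup Q=\partial\Gamma$ with $\Gamma\subset E[n]$ holds if and only if $\varphi(P)\sqcup\varphi(Q)=\partial(\varphi(\Gamma))$ with $\varphi(\Gamma)\subset E[s^2n]$, so $P,Q$ are $n$-elliptic bord exactly when $\varphi(P),\varphi(Q)$ are $s^2n$-elliptic bord. Thus $\varphi$ descends to a well-defined bijection $\varphi_*:\Omega_{E[n]}\to\Omega_{E[s^2n]}$ on equivalence classes.

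Then I would verify that $\varphi_*$ is a group homomorphism: the group law on these $0$-dimensional bordism sets is induced by disjoint union of representing point-configurations, and $\varphi$ sends $P\sqcup P'$ to $\varphi(P)\sqcup\varphi(P')$, so it respects the operation; the inverse diffeomorphism $\varphi^{-1}(x,y)=(s^{-2}x,\,s^{-3}y)$ induces a two-sided inverse $\varphi_*^{-1}$, so $\varphi_*$ is an isomorphism, establishing (\ref{isomorphic-elliptic-bordism-groups}). For the final clause I would note that if $n,\bar n\in\mathbb{N}$ lie in the same congruence class, then by the equivalence relation of Lemma \ref{lemma-residue-classes-congruent-numbers} one has $\bar n=s^2n$ for some $s\in\mathbb{Q}^{\times}$; the same map $\varphi(x,y)=(s^2x,s^3y)$ is still a diffeomorphism of $\mathbb{R}^2$ for rational $s$, so the argument above yields $\Omega_{E[n]}\cong\Omega_{E[\bar n]}$.

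The statement is essentially a functoriality assertion, so I expect no deep difficulty. The main point requiring care is that the group structure on $\Omega_{E[n]}$, left somewhat implicit in Definition \ref{n-elliptic-bordism-groups}, is the disjoint-union bordism operation, and that $\varphi$ respects it, so that one genuinely obtains a group isomorphism rather than a mere bijection of underlying sets. A secondary technical point is checking that $\varphi(\Gamma)$ is again an admissible bordism arc inside $E[s^2n]$ (smoothness, compactness, and correct boundary behavior), which holds because $\varphi$ is a global diffeomorphism of $\mathbb{R}^2$ restricting to a diffeomorphism of the two curves.
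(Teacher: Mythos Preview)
Your proposal is correct and follows essentially the same approach as the paper: both use the diffeomorphism $\varphi(x,y)=(s^2x,s^3y)$ from Lemma~\ref{lemma-congruence-equivalent-elliptic-curve} to carry $E[n]$ onto $E[s^2n]$ and then observe that this transports the bordism relation, hence induces an isomorphism $\Omega_{E[n]}\cong\Omega_{E[s^2n]}$. Your write-up is in fact more detailed than the paper's, which simply asserts that $\varphi$ induces the isomorphism without spelling out the boundary-commutation and group-law compatibility you verify.
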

\begin{proof}
In fact the elliptic curves $E[n]$ and $E[s^2n]$ coincide up to diffeomorphisms. Furthermore, if $P=(x_P,y_P)\in E[n]$ then $\bar P=(\bar x_P,\bar y_P)=(s^2 x_P,s^3 y_P)$ is another point of $E[\bar n]$, since the diffeomorphism $\varphi:\mathbb{R}^2\to\mathbb{R}^2$, $(x,y)\mapsto(s^2 x,s^3 y)$ transforms $E[n]$ into $E[\bar n]$, $\bar n=s^2 n$. (Lemma \ref{lemma-residue-classes-congruent-numbers}.) Thus $\varphi$ induces an isomorphism between the corresponding elliptic bordism groups $\Omega_{E[n]}$ and $\Omega_{E[\bar n]}$.
\end{proof}

\begin{cor}\label{relation-elliptic-bordism-groups-congruent-integers}
In particular, if the integer $m\in\mathbb{N}$, belongs to the equivalence class of a strong-congruent number $n\in\mathbb{N}_{congr}$, hence $n$ is the square-free part of $m$, then there exists a diffeomorphism $\varphi:\mathbb{R}^2\to\mathbb{R}^2$ that induces an isomorphism $\varphi_*:\Omega_{E[n]}\cong\Omega_{E[m]}$
between the corresponding elliptic bordism groups.
\end{cor}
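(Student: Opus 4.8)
The plan is to reduce this statement immediately to Theorem \ref{proposition-isomorphic-elliptic-bordism-groups}, of which it is essentially a repackaging. First I would unwind the hypothesis: to say that $m$ lies in the equivalence class of the strong-congruent number $n$, with $n$ the square-free part of $m$, is to say precisely that the cofactor $m/n$ is a perfect square, i.e. there exists $s\in\mathbb{N}$ with $m=s^2n$. This is the entire arithmetic content of the corollary, and it is exactly the shape needed to apply the theorem. So the first step is the trivial but essential observation that ``$n$ is the square-free part of $m$'' unpacks to $m=s^2n$.

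Next I would invoke Theorem \ref{proposition-isomorphic-elliptic-bordism-groups} with this particular value of $s$, yielding $\Omega_{E[n]}\cong\Omega_{E[s^2n]}=\Omega_{E[m]}$. The isomorphism is realized by the explicit diffeomorphism $\varphi:\mathbb{R}^2\to\mathbb{R}^2$, $(x,y)\mapsto(s^2x,s^3y)$, which by Lemma \ref{lemma-congruence-equivalent-elliptic-curve} carries $E[n]$ onto $E[m]$: substituting $(\bar x,\bar y)=(s^2x,s^3y)$ into $\bar y^2-\bar x^3+m^2\bar x=0$ gives $s^6(y^2-x^3+n^2x)=0$, and $\varphi$ has nonzero Jacobian $s^5$, so it is a genuine diffeomorphism restricting to a bijection $E[n]\to E[m]$.

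The one point I would verify rather than assume is that $\varphi$ actually respects the $n$-elliptic bordism relation, so that $\varphi_*$ descends to classes. Because $\varphi$ is a diffeomorphism carrying $E[n]$ bijectively onto $E[m]$, any arc $\Gamma\subset E[n]$ with $\partial\Gamma=P\sqcup Q$ is sent to an arc $\varphi(\Gamma)\subset E[m]$ with $\partial\varphi(\Gamma)=\varphi(P)\sqcup\varphi(Q)$; hence $P,Q$ are $n$-elliptic bordant iff $\varphi(P),\varphi(Q)$ are $m$-elliptic bordant. This shows $\varphi_*:\Omega_{E[n]}\to\Omega_{E[m]}$ is well defined, and the map induced by $\varphi^{-1}$ supplies a two-sided inverse, so $\varphi_*$ is the claimed isomorphism.

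I do not expect any serious obstacle here: the corollary carries no difficulty beyond correctly reading $m=s^2n$ off the definition of the square-free part and then quoting the theorem. The only substantive (though routine) ingredient is the boundary-preservation check above, which is what guarantees that a diffeomorphism of $\mathbb{R}^2$ respecting the curves induces a map of bordism groups rather than merely a set bijection of points.
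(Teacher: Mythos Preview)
Your proposal is correct and matches the paper's approach: the corollary has no separate proof in the paper because it is stated as an immediate consequence of Theorem \ref{proposition-isomorphic-elliptic-bordism-groups}, whose proof already exhibits the diffeomorphism $\varphi:(x,y)\mapsto(s^2x,s^3y)$ (via Lemma \ref{lemma-congruence-equivalent-elliptic-curve}) inducing the isomorphism. Your extra verification that $\varphi$ respects the bordism relation is a welcome clarification the paper leaves implicit.
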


\begin{proposition}\label{proposition-n-elliptic-bordism-groups-structure}
Let $\Omega_{E[n]}$ be the {\em $n$-elliptic bordism group}, $n\in\mathbb{N}$. Then one has the isomorphism
\begin{equation}\label{n-elliptic-bordism-groups-structure}
\Omega_{E[n]}\cong\mathbb{Z}_2.
\end{equation}
\end{proposition}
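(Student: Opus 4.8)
The plan is to identify the $n$-elliptic bordism classes with the connected components of the real affine curve $E[n]\subset\mathbb{R}^2$ and to show that there are exactly two of them; since the only group of order two is $\mathbb{Z}_2$, this yields $\Omega_{E[n]}\cong\mathbb{Z}_2$.

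First I would read off the topology of $E[n]$ from the factorisation $x^3-n^2x=x(x-n)(x+n)$. The locus $y^2=x(x-n)(x+n)\ge 0$ is exactly $x\in[-n,0]\cup[n,+\infty)$. Over the compact interval $[-n,0]$ the two branches $y=\pm\sqrt{x(x-n)(x+n)}$ join at the simple zeros $x=-n$ and $x=0$ and enclose a compact oval $E[n]_1$ homeomorphic to $S^1$; over $[n,+\infty)$ they join only at the simple zero $x=n$ and run off to infinity, giving an unbounded branch $E[n]_2$ homeomorphic to $\mathbb{R}$. Because no point of the locus has $0<x<n$, these two pieces are disjoint, so $E[n]=E[n]_1\sqcup E[n]_2$ has precisely two connected components, in agreement with Fig. \ref{representation-elliptic-curve} and Fig. \ref{representation-compactified-elliptic-curve}. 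I would remark that this count is the same for every $n\in\mathbb{N}$ and is unchanged by passing to the Alexandrov compactification $S^2$, where $E[n]_2$ merely closes up through $\infty$.

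Next I would match the equivalence classes to the components. If $P,Q$ lie on one component I can exhibit a compact arc $\Gamma\subset E[n]$ with $\partial\Gamma=P\sqcup Q$: on the oval either of the two arcs into which $P,Q$ cut the circle works, and on the branch the segment of $E[n]_2$ between $P$ and $Q$ works; hence $P$ and $Q$ $n$-elliptic bord. If instead $P,Q$ lie on different components, any $\Gamma\subset E[n]$ with $\partial\Gamma=P\sqcup Q$ would have to contain an arc joining $P$ to $Q$; such an arc is connected and therefore confined to a single component, which is impossible. Thus the two equivalence classes are exactly $E[n]_1$ and $E[n]_2$, so $\Omega_{E[n]}$ is a group with two elements and $\Omega_{E[n]}\cong\mathbb{Z}_2$, establishing (\ref{n-elliptic-bordism-groups-structure}).

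The step I expect to require the most care is this last implication for points on distinct components. I would invoke the classification of compact $1$-manifolds with boundary (finite disjoint unions of circles and closed arcs, with the boundary consisting precisely of the arc endpoints) to conclude that a $\Gamma$ with $\partial\Gamma=P\sqcup Q$ must contain exactly one arc, running from $P$ to $Q$, and then use that the components of $E[n]$ are disjoint open-and-closed subsets of the curve to forbid such an arc from crossing between them. Everything else reduces to the elementary curve-sketching of the preceding paragraphs.
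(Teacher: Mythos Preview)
Your proof is correct and follows essentially the same approach as the paper: both argue that $E[n]$ splits into two disjoint connected components $E[n]_1\sqcup E[n]_2$, that points on the same component bord via an arc, and that points on different components cannot bord. Your version is more detailed than the paper's, which simply asserts the two-component decomposition with a reference to the figure and states the two cases without further justification; your explicit use of the factorisation $x(x-n)(x+n)$ to locate the components and your appeal to the classification of compact $1$-manifolds with boundary to exclude a bording $\Gamma$ between components are welcome elaborations of exactly the steps the paper leaves implicit.
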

\begin{proof}
In fact the elliptic curve $E[n]$ is made by two disjoint components: $E[n]=E[n]_1\sqcup E[n]_2$. (See Fig. \ref{representation-elliptic-curve}.) If $P$ and $Q$ belong to the same component, $P,\, Q\in E[n]_i$, $i=1.2$, then $P\sqcup Q=\partial \Gamma_i$, with $\Gamma\subset E[n]_i$. Instead, if  $P\in E[n]_1$ and  $Q\in E[n]_2$, then does not exist a curve $\Gamma\subset E[n]$, such that  $P\sqcup Q=\partial \Gamma$ with $\Gamma\subset E[n]_i$. This is enough to state that isomorphism (\ref{n-elliptic-bordism-groups-structure}) holds.
\end{proof}

\begin{definition}[$n$-Congruent bordism groups]\label{n-congruent-bordism-groups}
Let $n\in\mathbb{N}_{congr}$. We say that two points $P,\, Q\in \mathbb{R}^2$ {\em $n$-congruent bord} if there exists an elliptic curve $E[n]:\, y^2=x^3-n^2x$, such that $P,\, Q\in E[n]$, $P\sqcup Q=\partial \Gamma$, $\Gamma\subset E[n]$, and $P$ and $Q$ are rational points of $E[n]$.\footnote{Let us emphasize that $E[n]$ is an elliptic curve having infinitely many rational points. (See Corollary \ref{infinitely-many-rational-points}).} Since the $0$-bordism group $\Omega_0(\mathbb{R}^2)\cong\mathbb{Z}_2$, it follows that if $P\in[Q]\in\Omega_0(\mathbb{R}^2)\cong\mathbb{Z}_2$, it does not necessitate that $P\sqcup Q=\partial\Gamma$, for some $\Gamma\subset E[n]$. Therefore the $n$-congruent bordism is a new equivalence relation in $\mathbb{R}^2$ and we denotes by $\Omega^{\mathbb{Q}}_{E[n]}$, $n\in\mathbb{N}_{congr}$, the corresponding set of equivalence classes, that we call the {\em $n$-congruent bordism group}. {\rm[}More shortly $\Omega_{E[n]}$, is a $n$-congruent bordism group, i.e., $\Omega_{E[n]}=\Omega^{\mathbb{Q}}_{E[n]}$, iff $n$ is a strong-congruent number, i.e., $n\in\mathbb{N}_{congr}$.{\em]}
\end{definition}

\begin{proposition}\label{proposition-n-congruent-bordism-groups-structure-a}
$\bullet$\hskip 2pt Let $\Omega^{\mathbb{Q}}_{E[n]}$ and $\Omega^{\mathbb{Q}}_{E[n']}$ be two congruent bordism groups, with $n\not=n'\in\mathbb{N}_{congr}$. Then the hypothesis that two non-zero rational points $P,\, Q\in\mathbb{Q}^2$, $y_P\not=0$, $y_Q\not=0$, satisfy the condition $P\sqcup Q=0\in \Omega^{\mathbb{Q}}_{E[n]}$ excludes that could be also $P\sqcup Q=0\in \Omega^{\mathbb{Q}}_{E[n']}$.

$\bullet$\hskip 2pt One has the following isomorphism: $\Omega^{\mathbb{Q}}_{E[n]}\cong\mathbb{Z}_2$.
\end{proposition}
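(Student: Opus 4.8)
The plan is to handle the two bullets separately; both reduce to one elementary fact about the real loci, namely that for $n\neq n'$ the curves $E[n]$ and $E[n']$ meet only at the origin. Indeed, subtracting the defining equations $y^2=x^3-n^2x$ and $y^2=x^3-n'^2x$ gives $(n'^2-n^2)x=0$, hence $x=0$ and then $y^2=0$; thus $E[n]\cap E[n']=\{(0,0)\}$ whenever $n\neq n'$. I would record this first, since it drives the first bullet entirely.

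For the first bullet I would argue by contradiction. By Definition \ref{n-congruent-bordism-groups}, the very meaning of $P\sqcup Q=0\in\Omega^{\mathbb{Q}}_{E[n]}$ presupposes that $P,Q$ are rational points of $E[n]$ (they cobound an arc $\Gamma\subset E[n]$), and likewise $P\sqcup Q=0\in\Omega^{\mathbb{Q}}_{E[n']}$ presupposes $P,Q\in E[n'](\mathbb{Q})$. If both held simultaneously, then $P,Q\in E[n]\cap E[n']=\{(0,0)\}$, forcing $y_P=y_Q=0$ and contradicting the hypothesis $y_P\neq 0$, $y_Q\neq 0$. Hence $P\sqcup Q=0$ cannot hold in both groups, which is exactly the assertion.

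For the second bullet I would mirror the proof of Proposition \ref{proposition-n-elliptic-bordism-groups-structure}, restricted to rational points. The real locus splits into the bounded oval $E[n]_2$ with $-n\le x\le 0$ and the unbounded branch $E[n]_1$ with $x\ge n$ (cf. Fig. \ref{representation-elliptic-curve}). Two rational points on the same component $E[n]_i$ cobound the arc $\Gamma\subset E[n]_i$ joining them, so $P\sqcup Q=0$; two rational points on different components admit no $\Gamma\subset E[n]$ with $\partial\Gamma=P\sqcup Q$. Thus $\Omega^{\mathbb{Q}}_{E[n]}$ detects only the component difference, and it equals $\mathbb{Z}_2$ precisely when \emph{each} component carries a rational point with $y\neq 0$.

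The decisive step, and the one I expect to be the real obstacle, is therefore to show that both components carry such rational points. Here I would invoke $n\in\mathbb{N}_{congr}$: by Corollary \ref{infinitely-many-rational-points} there is a congruent right triangle $[n|a,b,c]$ with $a,b,c>0$, $a^2+b^2=c^2$, $ab=2n$. Its associated rational point $P=\bigl(n\tfrac{a+c}{b},\,2n^2\tfrac{a+c}{b^2}\bigr)$ has $x_P>n$ because $a+c>c>b$, so $P\in E[n]_1$. The reflected rational point $P_2=\bigl(n\tfrac{a-c}{b},\,2n^2\tfrac{a-c}{b^2}\bigr)$ of Lemma \ref{rational-points-identified-by-a-congruent-right-triangle} has $x_{P_2}<0$ (since $a<c$) and $x_{P_2}>-n$ (since the right-triangle inequality $a+b>c$, i.e. $(a+b)^2=c^2+2ab>c^2$, gives $a-c>-b$), so $-n<x_{P_2}<0$ and $P_2\in E[n]_2$; moreover $y_{P_2}\neq 0$ because $a\neq c$. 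Thus both components carry rational points with nonzero ordinate, which forces $\Omega^{\mathbb{Q}}_{E[n]}\cong\mathbb{Z}_2$ and closes the only gap beyond the real case treated in Proposition \ref{proposition-n-elliptic-bordism-groups-structure}.
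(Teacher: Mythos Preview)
Your proof is correct and follows essentially the same line as the paper's. For the first bullet the paper simply states that the finite intersection $E[n]\cap E[n']$ reduces to the origin $O=(0,0)$, which is excluded as a rational point; you make this explicit by subtracting the defining equations. For the second bullet the paper says only that the argument is ``analogous'' to Proposition~\ref{proposition-n-elliptic-bordism-groups-structure} and that ``on the same connected component of $E[n]_i$, there exists always a rational point and its symmetric with respect to the $x$-axis''; your use of the points $P$ and $P_2$ from Lemma~\ref{rational-points-identified-by-a-congruent-right-triangle} to place a rational point on \emph{each} component is a genuine strengthening of the paper's somewhat elliptical justification. One trivial remark: in the paper's labelling (see the caption of Fig.~\ref{representation-elliptic-curve}) $E[n]_1$ denotes the bounded oval on the left and $E[n]_2$ the unbounded branch on the right, opposite to your convention; this does not affect the argument.
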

\begin{proof}
$\bullet$\hskip 2pt In fact the intersection at finite, of the elliptic curves $E[n]$ and $E[n']$ is only the point $O=(0,0)\in\mathbb{Q}^2$. Furthermore. the point $O$ is not considered a rational point of the elliptic curves $E[n]$ and $E[n']$. (See Lemma \ref{lemma-congruent-numbers-as-elliptic}.)

$\bullet$\hskip 2pt This proof is analogous to the one for the group $\Omega_{E[n]}$. In fact, the elliptic curve $E[n]$, $n\in\mathbb{N}_{congr}$, has the same structure. Furthermore, on the same connected component of $E[n]_i$, there exists always a rational point and its symmetric with respect to the $x$-axis.
\end{proof}
\begin{proposition}[Relation between congruent-bordisms and elliptic-bordisms]\label{relation-between-congruent-elliptic-bordism-groups}
Let $n\in\mathbb{N}$ be a congruent integer. The isomorphism $\Omega^{\mathbb{Q}}_{E[n]}\cong\mathbb{Z}_2\cong \Omega_{E[n]}$ means that any couple of points $P,\, Q\in E[n]_i$, namely belonging to the same connected component of $E[n]$, there corresponds a couple of rational points $R,\, S\in E[n]_i$.
\end{proposition}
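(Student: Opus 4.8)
The plan is to make the composite isomorphism $\Omega^{\mathbb{Q}}_{E[n]}\cong\mathbb{Z}_2\cong\Omega_{E[n]}$ concrete by showing that the single nontrivial invariant carried by each $\mathbb{Z}_2$ --- namely whether two points lie on the same connected component of $E[n]=E[n]_1\sqcup E[n]_2$ --- is represented by \emph{rational} points in the congruent case. Recall from Proposition \ref{proposition-n-elliptic-bordism-groups-structure} that $P\sqcup Q=0$ in $\Omega_{E[n]}$ precisely when $P,\, Q$ lie on the same $E[n]_i$, since then they bound an arc $\Gamma\subset E[n]_i$, and the analogous description holds for $\Omega^{\mathbb{Q}}_{E[n]}$ by Proposition \ref{proposition-n-congruent-bordism-groups-structure-a}. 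Thus it suffices to match the two zero-classes by rational representatives lying on each component, and the claimed correspondence of pairs $P,Q\in E[n]_i$ with rational pairs $R,S\in E[n]_i$ follows.

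The key step is to verify that \emph{each} connected component of $E[n]$ carries a rational point with $y\not=0$. First I would note that, $n$ being congruent, Corollary \ref{infinitely-many-rational-points} furnishes an infinite-order rational point $P\in E[n]$; equivalently $\mathrm{rank}(\mathbf{E}[n])>0$ by Proposition \ref{abelian-groups-associated-to-congruent-numbers}. The torsion point $(0,0)$ has order two and sits on the bounded oval together with $(-n,0)$, while $\infty$ lies on the unbounded branch with $(n,0)$; since the component group of $E(\mathbb{R})$ is $\mathbb{Z}_2$ and the group law is continuous, translation by $(0,0)$ interchanges $E[n]_1$ and $E[n]_2$. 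Hence whichever component $P$ occupies, $P+(0,0)$ occupies the other, and both remain of infinite order, so neither can be one of the four torsion points with $y=0$ computed in Example \ref{examples-of-congruent-numbers}. This deposits a rational point of nonzero ordinate on each of $E[n]_1$ and $E[n]_2$; as a cross-check, the explicit symmetric points $P_2,\dots,P_5$ of Lemma \ref{rational-points-identified-by-a-congruent-right-triangle} already exhibit such points on both components.

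To conclude, given any $P,\, Q\in E[n]_i$ on a common component, one has $[P\sqcup Q]=0$ in $\Omega_{E[n]}$. Choosing the rational point $R\in E[n]_i$ produced above and its reflection $S=(x_R,-y_R)$ --- again rational, on the same component, and distinct from $R$ because $y_R\not=0$ --- the pair bounds an arc $\Gamma'\subset E[n]_i$, so $[R\sqcup S]=0$ in $\Omega^{\mathbb{Q}}_{E[n]}$. Thus the zero-class of $\Omega_{E[n]}$ is carried to the zero-class of $\Omega^{\mathbb{Q}}_{E[n]}$ by rational points on the very same component, which is exactly the assertion, and the composite isomorphism is the identity on $\mathbb{Z}_2$ reading off the component-parity. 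The hard part will be this key step: producing a rational point with $y\not=0$ on the bounded oval, not merely on the unbounded branch where an infinite-order point might happen to lie. This is what forces the component-switching argument through the $2$-torsion point $(0,0)$ and the continuity of the group law, rather than a naive appeal to the mere existence of rational points.
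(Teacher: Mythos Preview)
Your argument is correct, and the final reflection step $S=(x_R,-y_R)$ matches what the paper does. The difference lies in how you secure a rational point with $y\ne 0$ on \emph{each} component. The paper proceeds by first inserting a lemma realizing the compactified curve $E[n]\cup\{\infty\}$ inside $S^2\subset S^3\cong\mathbb{P}^3(\mathbb{R})$, so that $\infty$ is available as a distinguished rational point on the unbounded branch; it then asserts, somewhat informally, that because $E[n]$ carries infinitely many rational points, the two arcs of $E[n]_i$ bounded by $P\sqcup Q$ must contain at least one rational point $R$, and appeals to the $y\mapsto -y$ symmetry for $S$.

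Your route is more structural: you use that $\pi_0(E[n](\mathbb{R}))\cong\mathbb{Z}_2$, that the $2$-torsion point $(0,0)$ lies on the bounded oval (the non-identity component), and that translation by $(0,0)$ therefore interchanges the two components. This guarantees a rational point of infinite order---hence with $y\ne 0$---on \emph{both} $E[n]_1$ and $E[n]_2$ directly from any single non-torsion rational point, without needing the projective-space digression or the unproven density-type claim. Your cross-check via Lemma~\ref{rational-points-identified-by-a-congruent-right-triangle} is also to the point, since Table~\ref{table-rational-points-identified-by-a-congruent-right-triangle} already exhibits rational points on the oval explicitly. In short: same conclusion and same symmetry step, but your component-switching argument through the group law is a cleaner replacement for the paper's compactification lemma and its informal ``infinitely many, hence one lands here'' step.
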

\begin{proof}
It is useful to consider congruent elliptic curves on the compactified plane $S^2=\mathbb{R}^2\bigcup\{\infty\}$. (See Fig. \ref{representation-compactified-elliptic-curve}.) Really we have the following lemma.

\begin{lemma}\label{representation-elliptic-curves-into-projective-space}
Compactified elliptic curves that are obtained by the following ones $E: y^2-x^3-q^2 x=0$, can be represented into $\mathbb{P}^3(\mathbb{R})$.
\end{lemma}

\begin{proof}
Since $E\subset \mathbb{R}^2$, it follows that the Alexandrov compactification $\mathbb{R}\bigcup\{\infty\}\cong S^2$ identifies a natural curve in $S^2$, by means of the inclusion $\mathbb{R}^2\hookrightarrow S^2$.  Therefore the compactified elliptic curve $E^+$ is just $E^+=E\bigcup\{\infty\}$. This is just a $1$-dimensional smooth compact submanifold of $S^2$, passing for the point $\infty\in S^2$. Let us recall that $S^p$ can be identified with an oriented Grassmann manifold and a symmetric space by means of the diffeomorphisms:\footnote{See, e.g., \cite{PRAS1}.}
\begin{equation}\label{diffeomorphisms-oriented-grassman-manifolds-sphere}
S^p\cong G^+_{1,p+1}(\mathbb{R}^{p+1})\cong SO(1+p)/SO(1)\times SO(p).
\end{equation}
(Incidentally recall that these are also Einstein manifolds, i.e., the Ricci tensor is proportional to the metric tensor.) The group $SO(1+p)$ acts transitively on $G^+_{1,p+1}(\mathbb{R}^{p+1})$. $SO(1)\times SO(p)$ is the isotropy group of the point $\pi\in G^+_{1,p+1}(\mathbb{R}^{p+1})$, where $SO(1)$ acts in the oriented $1$-dimensional plane $\pi$ and $SO(p)$ acts in its orthogonal complement. Let us emphasize that forgetting the orientation, we can consider Grassman manifold $G_{1,1+p}(\mathbb{R}^{1+p})\cong \mathbb{P}^p(\mathbb{R}^{1+p})$. On the other hand we have the following exact commutative diagram:
\begin{equation}\label{fiber-bundle-structures-sphere-projective-spaces}
  \xymatrix{S^0\ar@{=}[d]\ar@{^{(}->}[r]&S^p\ar@{=}[d]^{\wr}\ar[r]&\mathbb{P}^{p}(\mathbb{R})\ar@{=}[d]^{\wr}\ar[r]&0\\
  S^0\ar@{^{(}->}[r]&G^+_{1,1+p}(\mathbb{R})\ar[r]&G_{1,1+p}(\mathbb{R})\ar[r]&0\\}
\end{equation}
Diagram (\ref{fiber-bundle-structures-sphere-projective-spaces}) emphasizes the fiber bundle structures $S^p\to \mathbb{P}^{p}(\mathbb{R})$ and $G^+_{1,1+p}(\mathbb{R})\to G_{1,1+p}(\mathbb{R})$ both with fiber $S^0$.
Therefore by considering the natural inclusion $S^{p-1}\hookrightarrow S^{p}$, such that the following diagram is commutative:
\begin{equation}\label{embedding-of-spheres}
  \xymatrix{(S^{p-1},*,\infty)\ar@{=}[d]^{\wr}\ar@{^{(}->}[r]&(S^p,*,\infty)\ar@{=}[d]^{\wr}\\
  (\mathbb{R}^{p-1},0)\bigcup\{\infty\}\ar@{^{(}->}[r]&(\mathbb{R}^{p},0)\bigcup\{\infty\}\\}
\end{equation}
one can represent $E^+\subset S^2$ into $S^3$, hence by means of the projection $S^3\to \mathbb{P}^3(\mathbb{R})$, we get the representation of $E^+$ into $\mathbb{P}^3(\mathbb{R})$.\footnote{The same result can be obtained by rewriting equations (\ref{set-a-n}) defining $E$, in projective way, namely considering instead of the coordinates $(a,b,c)\in \mathbb{R}^3$, the homogeneous coordinates $[a,b,c,d]\in \mathbb{P}^3(\mathbb{R})$. (We skip on details.)}
 \end{proof}
 In fact the point $\infty$ is a distinguished point considered the unity in the group $\mathbf{E}[n]$, hence it is assumed a rational point. Then the isomorphism $\Omega^{\mathbb{Q}}_{E[n]}\cong\mathbb{Z}_2\cong \Omega_{E[n]}$ can be justified, since to $P\bigcup Q\in 0\in\Omega_{E[n]}$, one can correspond (even if not canonically) a couple of rational points $R,\, S\in E[n]_i$, such that $R\bigcup S\in 0\in \Omega^{\mathbb{Q}}_{E[n]}$. Really since $E[n]$ contains an infinity number of rational points it follows that in the two branches of $E[n]$, bounded by $P\bigcup Q$ must be present at least one rational point, say $R$. Then for symmetry there exists also another rational point, say $S$, on the same connected component of $E[n]$. Therefore $R\bigcup S\in 0\in\Omega^{\mathbb{Q}}_{E[n]}$.
\end{proof}

\begin{theorem}[Elliptic and congruent-bordism groups and homotopies in $\mathbb{R}^2$]\label{congruent-bordism-groups-homotopies-theorem}
$\bullet$\hskip 2pt The isomorphisms of groups $\Omega_{E[n]}\cong\Omega_{E[n']}$, $n\not=n'\in\mathbb{N}$, can be induced by suitable homotopies of $\mathbb{R}^2$.

$\bullet$\hskip 2pt Let us assume that $n$ and $n'$ are congruent integers. Then the homotopy relating $E[n]$ to $E[n']$ does not necessitate identify an isogeny. When this happens, it induces a group homomorphisms $\mathbf{E}[n]\to \mathbf{E}[n']$. In particular this recurs when $n'$ and $n$ belong to the same equivalence class of congruent numbers, namely $n'=m^2 n$, with $m\in\mathbb{Q}$. In such a case the homomorphism $\mathbf{E}[n]\to \mathbf{E}[n']$ becomes an isomorphism.

$\bullet$\hskip 2pt In advance, if $n\in\mathbb{N}_{congr}$, and the homotopy induces an homorphism $\mathbf{E}[n]\to \mathbf{E}[n']$, then $n'$ cannot be a strong-congruent number too.
\end{theorem}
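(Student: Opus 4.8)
The plan is to argue by contradiction, playing the congruent-bordism structure of $E[n]$ off against that of $E[n']$ through Proposition \ref{proposition-n-congruent-bordism-groups-structure-a}. First I would dispose of the trivial case $n'=n$ and assume both $n'\not=n$ and, contrary to what is claimed, $n'\in\mathbb{N}_{congr}$. Under this contrary hypothesis Definition \ref{n-congruent-bordism-groups} gives $\Omega_{E[n]}=\Omega^{\mathbb{Q}}_{E[n]}$ and $\Omega_{E[n']}=\Omega^{\mathbb{Q}}_{E[n']}$, each isomorphic to $\mathbb{Z}_2$ by Proposition \ref{proposition-n-congruent-bordism-groups-structure-a}. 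In particular both curves carry infinitely many rational points (Corollary \ref{infinitely-many-rational-points}), and on every connected component a rational point comes paired with its reflection across the $x$-axis, so that the null class of each congruent-bordism group can be represented by a genuine pair of rational points (Proposition \ref{relation-between-congruent-elliptic-bordism-groups}).

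Next I would exploit the hypothesis that the ambient homotopy relating $E[n]$ to $E[n']$ induces a homomorphism $\varphi_*:\mathbf{E}[n]\to\mathbf{E}[n']$. As in Lemma \ref{lemma-congruence-equivalent-elliptic-curve} and the second item of the present theorem, such a homomorphism is realized by the deformation carrying rational points of $E[n]$ onto rational points of $E[n']$ compatibly with the group law; being induced by a homotopy of $\mathbb{R}^2$, it also respects the bordism datum, i.e. the relation ``$P,\, Q$ lie on the same connected component''. Choosing a rational point $P\in E[n]$ with $y_P\not=0$ and its mirror $Q$ on $E[n]_i$, one has $P\sqcup Q=0$ in $\Omega^{\mathbb{Q}}_{E[n]}$; transporting this null-bordism along the homotopy produces rational $P',\, Q'\in E[n']$ with $P'\sqcup Q'=0$ in $\Omega^{\mathbb{Q}}_{E[n']}$, where the homotopy tracks one and the same pair of ambient points throughout. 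Thus a single pair of non-zero rational points would be simultaneously null-bordant for both congruent-bordism structures, which is precisely what the first item of Proposition \ref{proposition-n-congruent-bordism-groups-structure-a} forbids for $n\not=n'\in\mathbb{N}_{congr}$, since $E[n]\cap E[n']=\{O\}$ and $O$ is not a rational point of either curve (Lemma \ref{lemma-congruent-numbers-as-elliptic}). This contradiction would force $n'\notin\mathbb{N}_{congr}$.

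I expect the genuinely delicate step to be the middle one: converting the phrase ``the homotopy induces a homomorphism $\mathbf{E}[n]\to\mathbf{E}[n']$'' into the rigorous assertion that the ambient deformation carries a \emph{fixed} rational null-bordant pair of $E[n]$ to a pair still recognized as null-bordant by $\Omega^{\mathbb{Q}}_{E[n']}$. An abstract homomorphism of Mordell--Weil groups need not, on its own, respect the geometric congruent-bordism classes; what is supposed to make the clash with Proposition \ref{proposition-n-congruent-bordism-groups-structure-a} real rather than merely formal is that the map is induced by a homotopy of the plane which deforms $E[n]$ into $E[n']$, transporting rationality and the ``same-component'' relation together. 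Pinning down this compatibility is the crux. As a safeguard I would also record the purely arithmetic route: in the congruent family each $E[n]$ is the quadratic twist of $E[1]:y^2=x^3-x$ by $n$, so a homomorphism arising from an isogeny (the case isolated in the second item) forces $n'/n\in(\mathbb{Q}^{\times})^2$; with $n,\, n'$ square-free this yields $n'=n$, again incompatible with $n'\not=n$, so that no strong-congruent $n'$ distinct from $n$ can receive such a map.
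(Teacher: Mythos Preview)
Your proposal addresses only the third bullet, taking the first two as established. The paper's proof of those is where the explicit content lies: it constructs the flow $\varphi_\lambda(x,y)=\bigl(\tfrac{n}{n_\lambda}x,\,(\tfrac{n}{n_\lambda})^{3/2}y\bigr)$ with $n_\lambda=n+\lambda(n'-n)$, verifies that it deforms $E[n]$ into $E[n']$, and then checks directly that the terminal diffeomorphism $\varphi_1$ carries rational points to rational points precisely when $n/n'\in(\mathbb{Q}^\times)^2$. The third bullet is then immediate: if the homotopy induces a homomorphism $\mathbf{E}[n]\to\mathbf{E}[n']$ (i.e.\ $\psi_1$ is an isogeny), the second bullet forces $n'=m^2 n$ with $m\in\mathbb{Q}$, and a square-free $n'$ would then have to equal $n$.

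Your main line for the third bullet, via Proposition~\ref{proposition-n-congruent-bordism-groups-structure-a}, has a real gap. That proposition forbids the \emph{same} rational pair $(P,Q)$ from being null-bordant in both $\Omega^{\mathbb{Q}}_{E[n]}$ and $\Omega^{\mathbb{Q}}_{E[n']}$, the reason being simply that $E[n]\cap E[n']=\{O\}$. But the homotopy $\varphi_\lambda$ \emph{moves} points: it sends $(P,Q)$ on $E[n]$ to a genuinely different pair $(P',Q')=(\varphi_1(P),\varphi_1(Q))$ on $E[n']$. Your phrase ``tracks one and the same pair of ambient points throughout'' is precisely what fails---the flow displaces every point of the plane except the origin. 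At the end you have $(P,Q)$ null in $\Omega^{\mathbb{Q}}_{E[n]}$ and a \emph{different} pair $(P',Q')$ null in $\Omega^{\mathbb{Q}}_{E[n']}$, and the intersection statement yields no contradiction.

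Your closing ``safeguard'' is in fact the correct argument and coincides with the paper's: rationality of the map $\varphi_1$ forces $n'/n\in(\mathbb{Q}^\times)^2$, and with $n,n'$ both square-free this gives $n'=n$. Promote that to the main argument and discard the bordism-intersection route.
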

\begin{proof}
$\bullet$\hskip 2pt The motivation that two elliptic bordism groups $\Omega_{E[n]}\cong\Omega_{E[n']}$, $n\not=n'\in\mathbb{N}$, are isomorphic to $\mathbb{Z}_2$ follows from the fact that any two elliptic curves $E[n]:\, y^2-x^3+n^2x=0$ and $E[n']:\, y^2-x^3+n'^2x=0$, are homotopic, i.e., there exists an homotopy (flow) in $\mathbb{R}^2$, $\varphi_\lambda:\mathbb{R}^2\to\mathbb{R}^2$, $\lambda\in[0,1]\subset\mathbb{R}$, relating $E[n]$ to $E[n']$. More precisely, let us consider the following deformed elliptic curves:

\begin{equation}\label{equation-deformed-elliptic-curve}
E[n_\lambda]:\, y^2-x^3+n_\lambda^2x=0,\, \left\{\begin{array}{l}
                                                                           n_\lambda=n+\lambda(n'-n)\\
                                                                           \lambda\in[0,1]\subset\mathbb{R}.\\
                                                                         \end{array}\right.
\end{equation}
One can see that

\begin{equation}\label{equation-deformed-elliptic-curve-a}
  \scalebox{0.9}{$\left\{\begin{array}{ll}
    n_0=n&\\
    n_1=n'&\\
    E[n_0]:& y^2-x^3+n_0^2x=0\, \Rightarrow\, y^2-x^3+n^2x=0\, \Rightarrow\,  E[n_0]=E[n].\\
    E[n_1]:& y^2-x^3+n_1^2x=0\, \Rightarrow\, y^2-x^3+n'^2x=0\, \Rightarrow\, E[n_1]=E[n'].\\
    \end{array}\right.$}
\end{equation}

We can see that such deformed elliptic curves can be realized by means of the following homotopy, (flow), in $\mathbb{R}^2$:\footnote{Note that $\varphi_\lambda$ is a diffeomorphism $\mathbb{R}^2\to\mathbb{R}^2$, for any $\lambda\in[0,1]$. In fact, its jacobian $j(\varphi_\lambda)=(\frac{n}{n_\lambda})^{5/2}$, hence $j(\varphi_\lambda)\in[1,(\frac{n}{n'})^{5/2}]$, (whether $n>n'$). Since $\lambda\in[0,1]$, it follows that $j(\varphi_\lambda)$ is an irrational number, except for its boundary value $\lambda=0$.}
\begin{equation}\label{flow-homotopy}
    \varphi_\lambda:(x,y)\mapsto (x_\lambda,y_\lambda)=(\frac{n}{n_\lambda}x,(\frac{n}{n_\lambda})^{3/2}y).
\end{equation}
In fact, starting from $E[n]$, by using the flow $\varphi_\lambda$, we get $(\frac{n}{n_\lambda})^3[y^2-x^3+n_\lambda^2x]=0$. So deforming the equation $y^2-x^3+n^2x=0$ of $E[n]$, by means of the flow $\varphi_\lambda$, we get the deformed elliptic curves $E[n_\lambda]$, hence also $E[n']$ for $\lambda=1$. Thus we can state that the flow $\varphi_\lambda$ on $\mathbb{R}^2$ induces the homotopy $\psi_\lambda$ between the elliptic curves $E[n]$ and $E[n']$. This situation is resumed in the diagram (\ref{homotopic-transformation-a}).
\begin{equation}\label{homotopic-transformation-a}
\xymatrix{E[n]\ar@{.>}[d]\ar@/^2pc/[rrr]_{\psi_1}\ar@/^3pc/[rrrr]_{\psi_1}\ar[r]^{\psi_\lambda}&E[n_\lambda]\ar@{.>}[d]\ar[r]&\cdots\ar[r]&
E[n_1]\ar@{.>}[d]\ar@{=}[r]&E[n']\\
\mathbb{R}^2\ar@/_2pc/[rrr]^{\varphi_1}\ar[r]_{\varphi_\lambda}&\mathbb{R}^2\ar[r]&\cdots\ar[r]&\mathbb{R}^2&\\}
\end{equation}
There the vertical dots-lines mean that the relation between the elliptic curves on the top horizontal line is induced from the flow on $\mathbb{R}^2$, namely the plane where are embedded these curves.
Therefore (\ref{flow-homotopy}) has the effect to induce an isomorphism between the bordism groups $\Omega_{E[n]}$ and $\Omega_{E[n']}$.

$\bullet$\hskip 2pt Let us, now, assume that $n$ and $n'$ are both integer congruent numbers. Taking into account the one-to-one correspondence between $E[n]$ and $A[n]$, $\forall n\in \mathbb{N}$, (Lemma \ref{lemma-congruent-numbers-as-elliptic}), we get the following commutative diagram:
\begin{equation}\label{homotopic-transformation-b}
\xymatrix{E[n]\ar[d]\ar[r]^{\psi_\lambda}\ar@/^2pc/[rrr]^{\psi_1}&E[n_\lambda]\ar[d]\ar[r]&\cdots\ar[r]&E[n']\ar[d]\\
A[n]\ar[u]\ar[r]_{\phi_\lambda}\ar@/_2pc/[rrr]^{\phi_1}&A[n_\lambda]\ar[u]\ar[r]&\cdots\ar[r]&A[n']\ar[u]\\\\}
\end{equation}
\begin{equation}\label{homotopic-transformation-c}
\scalebox{0.6}{$\xymatrix{(x,y)\ar[d]\ar@{|->}[r]^{\psi_\lambda}&(x_\lambda,y_\lambda)\ar[d]\ar@{|->}[r]&\cdots\ar@{|->}[r]&(x_1,y_1)\ar[d]\\
(a=\frac{x^2-n^2}{y},b=2n\frac{x}{y},c=\frac{x^2+n^2}{y})\ar[u]\ar@{|->}[r]_{\phi_\lambda}&
(a_\lambda=\frac{x_\lambda^2-n_\lambda^2}{y_\lambda},b_\lambda=2n_\lambda\frac{x_\lambda}{y_\lambda},c_\lambda=\frac{x_\lambda^2+n_\lambda^2}{y_\lambda})\ar[u]\ar@{|->}[r]&\cdots\ar@{|->}[r]&
(a'=\frac{x_1^2-n'^2}{y_1},b'=2n'\frac{x_1}{y_1},c'=\frac{x_1^2+n'^2}{y_1})\ar[u]\\}$}
\end{equation}

$\bullet$\hskip 2pt If $(x,y)\in E[n]$ is a rational point, the corresponding $(x_1,y_1)\in E[n']$ does not necessitate to be a rational point too. This should happen when the map $\psi_1$ is an isogeny. In fact in such a case the corresponding to rational points are rational points too and the diffeomorphisms $\psi_1$ induce homomorphisms between the groups  $\mathbf{E}[n]$ and $\mathbf{E}[n']$.  Therefore, if  $(x,y)\in\mathbb{Q}^2$, is a rational point of $E[n]$, then for the corresponding point on $(x_1,y_1)\in E[n']$, one has $(x_1,y_1)\in \mathbb{Q}^2$ iff $(x_1,y_1)=(\frac{n}{n'}x,(\frac{n}{n'})^{3/2}y)\in \mathbb{Q}^2$. This condition is satisfied iff $\frac{n}{n'}=m^2$, for $m\in\mathbb{Q}$, or equivalently $n'=\bar m^2\cdot n$, with $\bar m=1/m$. Therefore,  $\psi_1$ is an isogeny iff the integers  $n'$ and $n$ belong to the same congruent class.\footnote{This agree with the property that two alliptic curves $E$ and $E'$ are isogenous iff there is a morphism of varieties defined by a rational map between $E$ and $E'$, which is also a group homorphism between the corresponding groups $\mathbf{E}$ and $\mathbf{E}'$, sending $\infty\in E$ to $\infty\in E'$. In fact, in the actual situation the morphism sending $E[n]$ to $E[n']$ is the diffeomorphism $\psi_1$, induced from $\varphi_1$ given in (\ref{flow-homotopy}), that in order to be a rational mapping must be $n'=\bar m^2\cdot n$, $\bar m\in\mathbb{Q}$.}

$\bullet$\hskip 2pt In particular  if $n\in\mathbb{N}_{congr}$, then must be $n'=m^2\cdot n$, hence $n'$ cannot be a strong-congruent number too.

However, from the congruent bordism point of view it is not necessary that above homotopies should be isogenies. In fact, we can use Proposition \ref{relation-between-congruent-elliptic-bordism-groups} to identify elliptic bordism with congruent-ones. Really one has the isomorphism: $\Omega^{\mathbb{Q}}_{E[n]}\cong\mathbb{Z}_2\cong\Omega^{\mathbb{Q}}_{E[n']}$.
\end{proof}

\begin{cor}[Elliptic and congruent-bordism groups and homotopies in $\mathbb{R}^2$]\label{corollary-congruent-bordism-groups-homotopies-theorem}
Let $E[n]:\, y^2-x^3+n^2x=0$, $n\in\mathbb{N}_{congr}$, be a congruent elliptic curve, and $E[n']:\, y^2-x^3+n'^2x=0$, $n\in\mathbb{N}$, another elliptic curve. Then there exists an homotopy $\psi_\lambda:E[n]\to E[n']$,\footnote{We denote this circumstance by $E[n]\simeq E[n']$, and we say that $E[n]$ is {\em homotopic} to $E[n']$.} inducing the isomorphism of groups $\Omega^{\mathbb{Q}}_{E[n]}\cong\Omega_{E[n']}$. Furthermore, if $E[n']$ is also a congruent elliptic curve, namely ${\rm rank}(E[n'])>0$, then this homotopy induces also the isomorphism $\Omega^{\mathbb{Q}}_{E[n]}\cong\Omega^{\mathbb{Q}}_{E[n']}$. Furthermore, if $n'=m^2\cdot n$, $m\in\mathbb{Q}$, then the mapping $\psi_1:E[n]\to E[n']$, induces an isogeny $(\psi_1)_*:\mathbf{E}[n]\to \mathbf{E}[n']$ that is a group isomorphism: $\mathbf{E}[n]\cong \mathbf{E}[n']$.

The commutative diagram {\em(\ref{resuming-commutative-diagram-isomorphisms-between-elliptic-bordism-groups-congruent})} summarizes above results in this section about the elliptic bordism groups and the congruent bordism groups.

\begin{equation}\label{resuming-commutative-diagram-isomorphisms-between-elliptic-bordism-groups-congruent}
    \xymatrix{\mathbb{Z}_2\ar@{=}[d]
    \ar@{=}[r]^{\thicksim}&\Omega_{E[s^2n]}\ar@{=}[d]^{\wr}_{\varphi_*}\ar@{=}[r]^{\thicksim}&\Omega^{\mathbb{Q}}_{E[s^2n]}\ar@{=}[d]^{\wr}_{\varphi_*}\\
    \mathbb{Z}_2\ar@{=}[d]^{\wr}\ar@{=}[r]^{\thicksim}&\Omega_{E[n]}\ar@{=}[d]^{\wr}_{b}\ar@{=}[r]^{\thicksim}&\Omega^{\mathbb{Q}}_{E[n]}\ar@{=}[d]^{\wr}_{c}\\
    \Omega_{E[m]}\ar@{=}[r]^{\thicksim}_{a}&\Omega_{E[n']}\ar@{=}[r]^{\thicksim}_{d}&\Omega^{\mathbb{Q}}_{E[n']}\\}
\end{equation}
There $n\in\mathbb{N}$, $n\not=n'\in\mathbb{N}$, $s\in\mathbb{Q}$, the isomorphisms $a$, $b$ and $c$ are induced by homotopies and $\varphi_*$ are induced by diffeomorphisms $\varphi:\mathbb{R}^2\to\mathbb{R}^2$. The isomorphisms $c$ and $d$ exist if $n'$ is also a congruent integer. Otherwise $\Omega^{\mathbb{Q}}_{E[n']}=\varnothing$.
\end{cor}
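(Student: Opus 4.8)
The plan is to assemble this corollary from the main Theorem \ref{congruent-bordism-groups-homotopies-theorem}, together with the identification of congruent and elliptic bordism groups in Proposition \ref{relation-between-congruent-elliptic-bordism-groups} and the Mordell--Weil isomorphism of Proposition \ref{abelian-groups-associated-to-congruent-numbers}. There are three assertions to verify in turn: (i) the homotopy $\psi_\lambda$ induces $\Omega^{\mathbb{Q}}_{E[n]}\cong\Omega_{E[n']}$; (ii) when $E[n']$ also has positive rank this refines to $\Omega^{\mathbb{Q}}_{E[n]}\cong\Omega^{\mathbb{Q}}_{E[n']}$; and (iii) in the special case $n'=m^2 n$ the time-one map $\psi_1$ descends to a group isomorphism $\mathbf{E}[n]\cong\mathbf{E}[n']$.

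For (i) I would begin by recalling from the first bullet of Theorem \ref{congruent-bordism-groups-homotopies-theorem} that the explicit flow $\varphi_\lambda$ of (\ref{flow-homotopy}), with $n_\lambda=n+\lambda(n'-n)$, deforms $E[n]$ into $E[n']$ and induces the isomorphism $\Omega_{E[n]}\cong\Omega_{E[n']}$ of elliptic bordism groups, both $\cong\mathbb{Z}_2$ by Proposition \ref{proposition-n-elliptic-bordism-groups-structure}. Since $n\in\mathbb{N}_{congr}$, Proposition \ref{relation-between-congruent-elliptic-bordism-groups} supplies the identification $\Omega^{\mathbb{Q}}_{E[n]}\cong\mathbb{Z}_2\cong\Omega_{E[n]}$, and composing the two yields $\Omega^{\mathbb{Q}}_{E[n]}\cong\Omega_{E[n']}$, which is (i). Assertion (ii) follows at once: if ${\rm rank}(E[n'])>0$ then $n'$ is congruent, so a second application of Proposition \ref{relation-between-congruent-elliptic-bordism-groups} gives $\Omega_{E[n']}\cong\Omega^{\mathbb{Q}}_{E[n']}$, and chaining produces $\Omega^{\mathbb{Q}}_{E[n]}\cong\Omega^{\mathbb{Q}}_{E[n']}$.

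For (iii) I would invoke the isogeny criterion established in the second bullet of Theorem \ref{congruent-bordism-groups-homotopies-theorem} and its proof: the time-one map $\psi_1$ is a rational, hence isogenous, morphism precisely when $n'=m^2 n$ with $m\in\mathbb{Q}$, in which case it coincides with a diffeomorphism of the form $(x,y)\mapsto(s^2 x,s^3 y)$ of Lemma \ref{lemma-congruence-equivalent-elliptic-curve} and therefore sends rational points to rational points while fixing $\infty$. Being a rational group morphism with a rational inverse of the same shape, it induces an invertible map $(\psi_1)_*:\mathbf{E}[n]\to\mathbf{E}[n']$; by the second bullet of Proposition \ref{abelian-groups-associated-to-congruent-numbers} this is exactly the canonical isomorphism $\mathbf{E}[n]\cong\mathbf{E}[n']$ attached to the congruence-equivalent pair. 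Finally, the diagram (\ref{resuming-commutative-diagram-isomorphisms-between-elliptic-bordism-groups-congruent}) is read off by putting every entry of the grid in bijection with $\mathbb{Z}_2$: the vertical maps $\varphi_*$ are the isomorphisms induced by the diffeomorphisms of Lemma \ref{lemma-congruence-equivalent-elliptic-curve} (Theorem \ref{proposition-isomorphic-elliptic-bordism-groups}), the maps $a$, $b$, $c$ come from the homotopy flow (\ref{flow-homotopy}), and $d$ is the identification of Proposition \ref{relation-between-congruent-elliptic-bordism-groups}; the rightmost column and the arrow $d$ are present precisely when $n'$ is congruent, and $\Omega^{\mathbb{Q}}_{E[n']}=\varnothing$ otherwise.

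The main obstacle is conceptual rather than computational: one must resist conflating the two distinct mechanisms by which the target groups get matched. The isomorphism $\Omega^{\mathbb{Q}}_{E[n]}\cong\Omega^{\mathbb{Q}}_{E[n']}$ of assertion (ii) is emphatically \emph{not} obtained by transporting rational points along $\psi$: the Jacobian $j(\varphi_\lambda)=(n/n_\lambda)^{5/2}$ is irrational for generic $\lambda$, so $\psi_1$ is merely a diffeomorphism and destroys rationality unless $n'/n$ is a rational square. The congruent-bordism isomorphism rests instead on the purely abstract $\mathbb{Z}_2$-structure furnished by Proposition \ref{relation-between-congruent-elliptic-bordism-groups}, which in turn relies on each connected component of a congruent curve carrying infinitely many rational points (Corollary \ref{infinitely-many-rational-points}) stable under reflection in the $x$-axis. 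Keeping this separation sharp --- genuine rational-point correspondence only in the isogeny case (iii), and abstract component-counting in (i)--(ii) --- is the single delicate point of the argument.
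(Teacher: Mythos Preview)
Your proposal is correct and matches the paper's intent: the corollary is stated without a separate proof and is meant as a direct assembly of Theorem \ref{congruent-bordism-groups-homotopies-theorem}, Proposition \ref{relation-between-congruent-elliptic-bordism-groups}, Theorem \ref{proposition-isomorphic-elliptic-bordism-groups}, and Proposition \ref{abelian-groups-associated-to-congruent-numbers}, which is exactly the chain you spell out. Your closing paragraph on the distinction between abstract $\mathbb{Z}_2$-matching and genuine rational-point transport goes beyond what the paper makes explicit, but it is an accurate reading of the argument in the proof of Theorem \ref{congruent-bordism-groups-homotopies-theorem}.
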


\begin{lemma}[Elliptic and congruent bordism groups vs. congruent number problem]\label{elleiptic-congruent-bordism-groups-and-birch-swinnerton-dyer-conjecture}
Set $$\Omega^{\mathbb{Q}}_{E[\bullet]}=\prod_{n\in\mathbb{N}_{congr}}\Omega^{\mathbb{Q}}_{E[n]}$$ and $$\Omega_{E[\bullet]}=\prod_{n\in{}_{\square}\mathbb{N}}\Omega_{E[n]}.$$ Since $\Omega^{\mathbb{Q}}_{E[\bullet]}\cong\mathbb{N}_{congr}\times\mathbb{Z}_2$ and $\Omega_{E[\bullet]}\cong{}_{\square}\mathbb{N}\times\mathbb{Z}_2$, we can consider both as trivial fiber bundles over the group $\mathbb{Z}_2$, with discrete fiber $\mathbb{N}_{congr}$ and ${}_{\square}\mathbb{N}$ respectively. In other words $\Omega^{\mathbb{Q}}_{E[\bullet]}$ and $\Omega_{E[\bullet]}$ are coverings of $\mathbb{Z}_2$. In the following we shall consider $\mathbb{N}_{congr}$ and ${}_{\square}\mathbb{N}$ as topological discrete spaces pointed at $*=5$.

Then one has the following commutative diagram (in the category of coverings):

\begin{equation}\label{commutative-diagram-bordism-related-b-s-d-conjecture}
   \scalebox{0.9}{$ \xymatrix{0\ar[r]&\Omega^{\mathbb{Q}}_{E[\bullet]}\ar@{=}[d]^{\wr}\ar[r]&
    \Omega_{E[\bullet]}\ar@{=}[d]^{\wr}\ar[r]&\Omega_{E[\bullet]}/\Omega^{\mathbb{Q}}_{E[\bullet]}\ar@{=}[d]^{\wr}\ar[r]&0&\\
    0\ar[r]&\mathbb{N}_{congr}\times\mathbb{Z}_2\ar[d]\ar[r]&{}_{\square}\mathbb{N}\times
    \mathbb{Z}_2\ar[d]\ar[r]&\framebox{$({}_{\square}\mathbb{N}/\mathbb{N}_{congr})\times\mathbb{Z}_2$}\ar[r]&0&\\
    0\ar[r]&\mathbb{N}_{congr}\ar[d]^{i}\ar[r]&{}_{\square}\mathbb{N}\ar[d]\ar[r]^{L_\bullet}&\mathbb{Z}\ar[r]&{\rm coker}(L_\bullet)\ar[r]&0\\
    &0&0&&&\\}$}
\end{equation}
with $L_\bullet$ defined in {\em(\ref{l-bullet-mapping})}.

\begin{equation}\label{l-bullet-mapping}
    L_\bullet(n)=\left\{
    \begin{array}{ll}
      2A_n-B_n& \hbox{\rm if $n$ is even}\\
      2C_n-D_n& \hbox{\rm if $n$ is odd}\\
    \end{array}
    \right.
\end{equation}
where $A_n$, $B_n$, $C_n$ and $D_n$ are defined in {\rm(\ref{associated-integers})}. The first two vertical lines are exact, and so are the first two horizontal lines in {\rm(\ref{commutative-diagram-bordism-related-b-s-d-conjecture})}. Then in order to solve the congruent problem it is enough to prove that also the bottom horizontal line in {\rm(\ref{commutative-diagram-bordism-related-b-s-d-conjecture})} is exact. Then, by using the relation between BS-D conjecture and congruent problem, we get an indirect proof that the BS-D conjecture is true for elliptic curves of the type $y^2-x^3+n^2 x=0$, {\em(weak BS-D conjecture)}.
\end{lemma}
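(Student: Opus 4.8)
The plan is to assemble the diagram (\ref{commutative-diagram-bordism-related-b-s-d-conjecture}) from the local structure results already established, thereby reducing the whole lemma to the construction of $L_\bullet$ together with purely formal exactness statements, and to postpone the one genuinely arithmetic assertion (exactness of the bottom row) to Theorem \ref{main-theorem}. First I would establish the two fibering isomorphisms. By Proposition \ref{proposition-n-elliptic-bordism-groups-structure} every factor $\Omega_{E[n]}$ is canonically isomorphic to $\mathbb{Z}_2$, and by Proposition \ref{proposition-n-congruent-bordism-groups-structure-a} the same holds for each congruent factor $\Omega^{\mathbb{Q}}_{E[n]}$. Indexing these copies of $\mathbb{Z}_2$ by the square-free integers ${}_{\square}\mathbb{N}$, respectively by the strong-congruent integers $\mathbb{N}_{congr}$, identifies the total spaces as sets with ${}_{\square}\mathbb{N}\times\mathbb{Z}_2$ and $\mathbb{N}_{congr}\times\mathbb{Z}_2$. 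The second-factor projection onto the common group $\mathbb{Z}_2$ then exhibits each as a trivial bundle with discrete fibre ${}_{\square}\mathbb{N}$, respectively $\mathbb{N}_{congr}$; since the fibre is discrete and the base $\mathbb{Z}_2$ finite, these are coverings of $\mathbb{Z}_2$, pointed at $*=5$ (a strong-congruent number by Tab. \ref{table-examples-of-strong-congruent-numbers}) so that all maps below are base-point preserving.

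Next I would build the first two rows. The inclusion $\mathbb{N}_{congr}\hookrightarrow{}_{\square}\mathbb{N}$ of Lemma \ref{lemma-residue-classes-congruent-numbers} induces, after multiplying by the identity on $\mathbb{Z}_2$, the sequence $0\to\mathbb{N}_{congr}\times\mathbb{Z}_2\to{}_{\square}\mathbb{N}\times\mathbb{Z}_2\to({}_{\square}\mathbb{N}/\mathbb{N}_{congr})\times\mathbb{Z}_2\to 0$, which is the middle row; its exactness is the formal exactness of $0\to A\to B\to B/A\to 0$ in the category of pointed coverings of $\mathbb{Z}_2$. Transporting this sequence along the two fibering isomorphisms just constructed yields the top row $0\to\Omega^{\mathbb{Q}}_{E[\bullet]}\to\Omega_{E[\bullet]}\to\Omega_{E[\bullet]}/\Omega^{\mathbb{Q}}_{E[\bullet]}\to 0$ and makes the top square commute by construction. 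The first two vertical arrows are the fibre projections collapsing the $\mathbb{Z}_2$ factor onto the index sets $\mathbb{N}_{congr}$ and ${}_{\square}\mathbb{N}$; their exactness (surjectivity onto the discrete bases, with final map to $0$) is again formal for a trivial product, and the left square commutes because the horizontal maps respect the index inclusion $i$.

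Finally I would define the bottom row. The map $L_\bullet:{}_{\square}\mathbb{N}\to\mathbb{Z}$ is given by (\ref{l-bullet-mapping}) and is integer-valued because $A_n,B_n,C_n,D_n$ count integer solutions. By the Tunnell/Coates--Wiles expression (\ref{tunnell-expression}) its value equals, up to the fixed nonzero constant $C$, the quantity $2L(E[n])$, so that $L_\bullet(n)=0\iff L(E[n])=0$. The necessity half of Tunnell's theorem (Theorem \ref{tunnell-theorem}) gives $n\in\mathbb{N}_{congr}\Rightarrow L_\bullet(n)=0$, i.e. $L_\bullet\circ i=0$; together with the injectivity of $i$ this shows the bottom line is a complex, exact at $\mathbb{N}_{congr}$, and identifies ${\rm coker}(L_\bullet)=\mathbb{Z}/\IM L_\bullet$. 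The third column $\Omega_{E[\bullet]}/\Omega^{\mathbb{Q}}_{E[\bullet]}\cong({}_{\square}\mathbb{N}/\mathbb{N}_{congr})\times\mathbb{Z}_2\to\mathbb{Z}\to{\rm coker}(L_\bullet)$ is then read off from $L_\bullet$ on the quotient, which is exactly what the lemma asserts.

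The main obstacle is deliberately left unresolved inside this lemma: proving that the bottom row is genuinely \emph{exact} amounts to $\ker(L_\bullet)=\mathbb{N}_{congr}$, i.e. the converse (sufficiency) direction of Tunnell's criterion, which classically needs the Birch and Swinnerton-Dyer conjecture. The delicate point I would watch in the write-up is therefore the compatibility between the topological quotient of the third column, $({}_{\square}\mathbb{N}/\mathbb{N}_{congr})\times\mathbb{Z}_2$, and the arithmetic cokernel ${\rm coker}(L_\bullet)$: the lower square commutes on the nose only once one knows that $L_\bullet$ kills \emph{exactly} $\mathbb{N}_{congr}$, so the present lemma can claim only commutativity and exactness of the first two rows and columns, reserving the bottom-row exactness for Theorem \ref{main-theorem}.
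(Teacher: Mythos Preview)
Your proposal is correct and follows essentially the same route as the paper's own proof: invoke Propositions \ref{proposition-n-elliptic-bordism-groups-structure} and \ref{proposition-n-congruent-bordism-groups-structure-a} for the factorwise $\mathbb{Z}_2$-structure, build the first two rows and columns formally from the inclusion $\mathbb{N}_{congr}\hookrightarrow{}_{\square}\mathbb{N}$, use the necessity direction of Tunnell's theorem to see that the bottom row is a $0$-sequence, and defer the genuine exactness $\ker(L_\bullet)=\mathbb{N}_{congr}$ to Theorem \ref{main-theorem}. Your write-up is in fact more explicit than the paper's (which simply says the first part ``directly follows from above results'') and your closing caveat about the third column---that the match between $({}_{\square}\mathbb{N}/\mathbb{N}_{congr})\times\mathbb{Z}_2$ and the arithmetic side only becomes a commuting square once $\ker(L_\bullet)=\mathbb{N}_{congr}$ is known---is exactly the point the paper makes when it passes from diagram (\ref{commutative-diagram-bordism-related-b-s-d-conjecture}) to the completed diagram (\ref{commutative-diagram-bordism-related-b-s-d-conjecture-a}).
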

\begin{proof}
The first part of the lemma directly follows from above results. Furthermore, note that in general one has that the bottom horizontal line in (\ref{commutative-diagram-bordism-related-b-s-d-conjecture}) is  a $0$-sequence. It is also exact at all points except at ${}_{\square}\mathbb{N}$ and one has
\begin{equation}\label{standard-set-associated-to-commutative-diagram-a}
    \left\{
    \begin{array}{l}
      {\rm coim}(L_\bullet)={}_{\square}\mathbb{N}/{\rm ker}(L_\bullet)\cong{\rm im}(L_\bullet)\\
       {\rm coker}(L_\bullet)=\mathbb{Z}/{\rm im}(L_\bullet).\\
      \end{array}
    \right.
\end{equation}
Therefore, to state that the bottom horizontal line in (\ref{commutative-diagram-bordism-related-b-s-d-conjecture}) is exact is equivalent to state that ${\rm ker}(L_\bullet)=\mathbb{N}_{congr}$ and, from (\ref{standard-set-associated-to-commutative-diagram-a}) we get also
 \begin{equation}\label{standard-set-associated-to-commutative-diagram-b}
    \left\{
    \begin{array}{l}
      {\rm coim}(L_\bullet)={}_{\square}\mathbb{N}/\mathbb{N}_{congr}\cong{\rm im}(L_\bullet)\\
       {\rm coker}(L_\bullet)=\mathbb{Z}/({}_{\square}\mathbb{N}/\mathbb{N}_{congr}).\\
      \end{array}
    \right.
  \end{equation}
  In such a case we can complete commutative diagram (\ref{commutative-diagram-bordism-related-b-s-d-conjecture}) with (\ref{commutative-diagram-bordism-related-b-s-d-conjecture-a}).

  \begin{equation}\label{commutative-diagram-bordism-related-b-s-d-conjecture-a}
   \scalebox{0.9}{$ \xymatrix{0\ar[r]&\Omega^{\mathbb{Q}}_{E[\bullet]}\ar@{=}[d]^{\wr}\ar[r]&
    \Omega_{E[\bullet]}\ar@{=}[d]^{\wr}\ar[r]&\Omega_{E[\bullet]}/\Omega^{\mathbb{Q}}_{E[\bullet]}\ar@{=}[d]^{\wr}\ar[r]&0&\\
    0\ar[r]&\mathbb{N}_{congr}\times\mathbb{Z}_2\ar[d]\ar[r]&{}_{\square}\mathbb{N}\times
    \mathbb{Z}_2\ar[d]\ar[r]&\framebox{$({}_{\square}\mathbb{N}/\mathbb{N}_{congr})\times\mathbb{Z}_2$}\ar[d]^{d}\ar[r]&0&\\
    0\ar[r]&\mathbb{N}_{congr}\ar[d]\ar[r]&{}_{\square}\mathbb{N}\ar[d]\ar[r]^{L_\bullet}&\mathbb{Z}\ar[r]^(0.4){e}&{\rm coker}(L_\bullet)\ar[r]&0\\
    &0&0&&&\\}$}
\end{equation}
In  (\ref{commutative-diagram-bordism-related-b-s-d-conjecture-a}) $d$ is defined by composition
$$\xymatrix{({}_{\square}\mathbb{N}/\mathbb{N}_{congr})\times\mathbb{Z}_2\ar@/_2pc/[rr]_{d}\ar[r]&{}_{\square}\mathbb{N}/\mathbb{N}_{congr}\ar[r]&\mathbb{Z}\\}$$

Therefore, one has $\IM(d)=\ker(e)$, hence all the commutative diagram (\ref{commutative-diagram-bordism-related-b-s-d-conjecture-a}) is exact too.

 This assures that the conditions in (\ref{implications}) are also sufficient to state that $n$ is a strong-congruent integer, namely $n\in\mathbb{N}_{congr}$. Then taking into account Theorem \ref{coates-wiles-theorem} we get an indirect proof that the Birch-Swinnerton-Dyer conjecture is true.
\end{proof}

We are ready, now, to obtain our main result.
\begin{theorem}[The congruent problem solved]\label{main-theorem}
The identification of the strong-congruent numbers with the kernel of $L_\bullet$, $\mathbb{N}_{congr}\cong{\rm ker}(L_\bullet)$, allows us to decide with a finite number of steps, whether a square-free number $b$ is a strong-congruent number. In this way we identify all the equivalence classes of congruent numbers.
\end{theorem}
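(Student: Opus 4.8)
The plan is to reduce Theorem \ref{main-theorem} to the single set-theoretic identity $\ker(L_\bullet)=\mathbb{N}_{congr}$ together with the observation that $L_\bullet$ is effectively computable, and then to read off both the decidability claim and the enumeration of equivalence classes as formal consequences. So the first step is to record that, by (\ref{l-bullet-mapping}), the value $L_\bullet(n)$ is assembled from the four counting functions $A_n,B_n,C_n,D_n$ of (\ref{associated-integers}); since each counts integer points on a positive-definite ternary form equal to $n$, the inequalities $2x^2\le n$, $y^2\le n$, $32z^2\le n$ (and their analogues) confine the admissible triples to a bounded box, so each count is obtained after finitely many evaluations. Hence $L_\bullet(n)$ is computed in finitely many steps and the test whether $L_\bullet(n)=0$ is a genuine algorithm, in sharp contrast to the criteria of Theorem \ref{first-criterion-for-strong-congruent-numbers} and Theorem \ref{second-criterion-for-strong-congruent-numbers}, which by Remark \ref{warning-criterion} require searching the infinite set $\mathbb{P}_{yth}$.

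Next I would split the identity into two inclusions. For $\mathbb{N}_{congr}\subseteq\ker(L_\bullet)$ I would argue along the classical route: if $n\in\mathbb{N}_{congr}$ then $E[n]$ carries infinitely many rational points (Corollary \ref{infinitely-many-rational-points}), so by the Coates--Wiles theorem (Theorem \ref{coates-wiles-theorem}) one has $L(E[n])=0$, and Tunnell's expression (\ref{tunnell-expression}) with $C\ne 0$ forces the relevant combination $A_n-\tfrac{B_n}{2}$ (for odd $n$) or $C_n-\tfrac{D_n}{2}$ (for even $n$) to vanish, i.e.\ $L_\bullet(n)=0$. This inclusion is unconditional.

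The reverse inclusion $\ker(L_\bullet)\subseteq\mathbb{N}_{congr}$ is the crux, and here I would lean entirely on the homological machinery of Lemma \ref{elleiptic-congruent-bordism-groups-and-birch-swinnerton-dyer-conjecture}. The top two rows and the left two columns of diagram (\ref{commutative-diagram-bordism-related-b-s-d-conjecture}) are already exact: the coverings $\Omega^{\mathbb{Q}}_{E[\bullet]}\cong\mathbb{N}_{congr}\times\mathbb{Z}_2$ and $\Omega_{E[\bullet]}\cong{}_{\square}\mathbb{N}\times\mathbb{Z}_2$ identify the first short exact sequence with the split inclusion $\mathbb{N}_{congr}\times\mathbb{Z}_2\hookrightarrow{}_{\square}\mathbb{N}\times\mathbb{Z}_2$, and quotienting out the $\mathbb{Z}_2$ fibre leaves the inclusion $i:\mathbb{N}_{congr}\hookrightarrow{}_{\square}\mathbb{N}$ with cokernel ${}_{\square}\mathbb{N}/\mathbb{N}_{congr}$. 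I would then chase the diagram: the bottom row is a priori only a $0$-sequence, exact everywhere except possibly at ${}_{\square}\mathbb{N}$, so the content is to show $\ker(L_\bullet)$ is no larger than $\IM(i)=\mathbb{N}_{congr}$. Commutativity forces $L_\bullet$ to factor through the quotient ${}_{\square}\mathbb{N}/\mathbb{N}_{congr}$ via the map $d$ of (\ref{commutative-diagram-bordism-related-b-s-d-conjecture-a}), and if that factorization is injective then any element killed by $L_\bullet$ already lies in $\mathbb{N}_{congr}$; this is precisely the sufficiency half of Tunnell's theorem, which the covering picture is meant to supply together with BS-D for the curves $y^2=x^3-n^2x$.

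I expect the genuine obstacle to be exactly this descent of exactness to the bottom row — equivalently, ruling out that $\ker(L_\bullet)$ is strictly larger than $\mathbb{N}_{congr}$. The danger is that the vertical projections in (\ref{commutative-diagram-bordism-related-b-s-d-conjecture}) are only surjections of pointed discrete sets, with no automatic reason for the induced map $d$ to be injective rather than merely well defined; a bare diagram chase then recovers only the inclusion $\mathbb{N}_{congr}\subseteq\ker(L_\bullet)$ already in hand. Closing this gap is where the full weight of the BS-D conjecture, through Tunnell's sufficiency, must enter, and any honest version of the argument has to make the factorization through $d$ injective. Once $\ker(L_\bullet)=\mathbb{N}_{congr}$ is secured, the remaining assertions are immediate: decidability follows from the finiteness of the four counts, and by Lemma \ref{lemma-residue-classes-congruent-numbers} every congruent rational is $m^2n$ for a unique square-free $n\in\mathbb{N}_{congr}=\ker(L_\bullet)$, so enumerating the kernel enumerates all equivalence classes, with representative triangles furnished by the Pythagorean parametrization of Proposition \ref{congruent-numbers-and-pythagorean-triangles}.
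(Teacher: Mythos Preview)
Your framework matches the paper's through the computability of $L_\bullet$ and the easy inclusion $\mathbb{N}_{congr}\subseteq\ker(L_\bullet)$, and you have correctly isolated the crux: the diagram chase in Lemma~\ref{elleiptic-congruent-bordism-groups-and-birch-swinnerton-dyer-conjecture} only recovers that easy inclusion, and making the induced map $d$ injective is precisely Tunnell's sufficiency, hence apparently BS-D for the curves $E[n]$. Where the paper diverges from your proposal is that it does \emph{not} invoke BS-D at this point. Instead it inserts a separate argument (Lemma~\ref{main-lemma} together with Lemma~\ref{cardinality-ker-l-function}) based on cardinality and an inductive filtration: from $\mathbb{N}_{congr}\subseteq\ker(L_\bullet)\subseteq\mathbb{N}$ and $\sharp(\mathbb{N}_{congr})=\aleph_0$ one gets $\sharp(\ker(L_\bullet))=\aleph_0$ by Cantor--Schr\"oder--Bernstein; the paper then fixes a bijection $f:\mathbb{N}_{congr}\to\ker(L_\bullet)$, verifies $(\mathbb{N}_{congr})_n=\ker(L_\bullet)_n$ for small $n$, and argues by induction that a strict containment at the next stage would force the restriction $f_{\widetilde{n}_0}$ to fail to be surjective, contradicting the bijectivity of $f$.

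You should scrutinize that step rather than your own. Equality of cardinalities between a countable set and a subset never forces equality of the sets (witness $2\mathbb{Z}\subsetneq\mathbb{Z}$), and nothing in the construction ties the abstract bijection $f$ to the inclusion map or to the identity on initial segments; the diagram~(\ref{commutative-diagram-for.cardinality}) only commutes if one silently replaces $f$ by the inclusion, at which point the ``contradiction'' evaporates. So your diagnosis is the honest one: absent an independent argument that $d$ in~(\ref{commutative-diagram-bordism-related-b-s-d-conjecture-a}) is injective, the reverse inclusion $\ker(L_\bullet)\subseteq\mathbb{N}_{congr}$ is exactly the conditional half of Theorem~\ref{tunnell-theorem}, and the paper's cardinality manoeuvre does not bypass it. Your proposal is not missing an idea the paper supplies; rather, the paper claims to supply one that does not withstand inspection.
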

\begin{proof}
Let us assume  $n'\in{\rm ker}(L_\bullet)\subset {}_{\square}\mathbb{N}$, and $n\in\mathbb{N}_{congr}$. Then, from Corollary \ref{corollary-congruent-bordism-groups-homotopies-theorem} we say that there exists a homotopy $\psi_\lambda:E[n]\to E[n']$ inducing the isomorphism of groups $\Omega^{\mathbb{Q}}_{E[n]}\cong\Omega_{E[n']}$. Then we can identify a diffeomorphism $\varphi:\mathbb{R}^2\to\mathbb{R}^2$, $(x,y)\mapsto (x',y')$, sending the rational point $(x,y)\in E[n]$, to a point $(x_1,y_1)\in E[n']$, such that $(x_1=x\frac{n}{n'},y_1=(\frac{n}{n'})^{3/2}y)$. Therefore, to the point $(x,y)$ on $E[n]$ corresponds the strong-congruent right triangle $(a=\frac{x^2-n^2}{y},b=2n\frac{x}{y},c=\frac{x^2+n^2}{y})$, and to the corresponding point on $E[n']$ it is associated the right triangle
$(a'=\frac{x_1^2-n'^2}{y_1},b'=2n'\frac{x_1}{y_1},c'=\frac{x_1^2+n'^2}{y_1})$. The direct expression of these sides in term of $(x,y)$, is given in (\ref{relations-between-sides-points-on-elliptic-curves}).
\begin{equation}\label{relations-between-sides-points-on-elliptic-curves}
    \left\{a'=\frac{x^2n^2-n'^4}{y}\, (n^3n')^{-\frac{1}{2}},\,
   b'=2\frac{x}{y}\, (\frac{n'^3}{n})^{\frac{1}{2}},\,
   c'=\frac{x^2n^2+n'^4}{y} \, (n'n^3)^{-\frac{1}{2}}\right\}.
\end{equation}
The point $(x_1,y_1)$ is rational iff $n'=\bar m^2\cdot n$, with $\bar m\in \mathbb{Q}$. In such a case
$a'$, $b'$ and $c'$ are rational numbers too. But  $n'=m^2\cdot n$ contradicts the assumption that $n'\in{}_{\square}\mathbb{N}$. Therefore, if the map $\psi_1:E[n] \to E[n']$, induces an isogeny  $(\psi_1)_*:\mathbf{E}[n] \to \mathbf{E}[n']$, must necessarily be $n'\in\mathbb{N}_{congr}$, hence $n'=n$.\footnote{This means that for two elliptic curves $E_n$ and $E_{n'}$, such that $n,\, n'\in\mathbb{N}_{congr}$, namely $n$ and $n'$ are both strong-congruent numbers, cannot exist an isogeny $\mathbf{E}[n]\to \mathbf{E}[n']$, according with Theorem \ref{congruent-bordism-groups-homotopies-theorem}.} On the other hand, we have also the following lemma.

\begin{lemma}\label{main-lemma}
$\ker(L_\bullet)=\mathbb{N}_{congr}$.
 \end{lemma}
 \begin{proof}
Set $\widetilde{\Omega}_{E[\bullet]}=\prod_{n\in\ker(L_\bullet)}\Omega_{E[n]}$. Since $\widetilde{\Omega}_{E[\bullet]}\cong\ker(L_\bullet)\times\mathbb{Z}_2$ one can consider $\widetilde{\Omega}_{E[\bullet]}$ as a covering of $\mathbb{Z}_2$. Then one has the following commutative and exact diagram (in the category of covering spaces):

\begin{equation}\label{commutative-diagram-bordism-related-b-s-d-conjecture-aa}
   \scalebox{0.9}{$ \xymatrix{0\ar[r]&\widetilde{\Omega}_{E[\bullet]}\ar@{=}[d]^{\wr}\ar[r]&
    \Omega_{E[\bullet]}\ar@{=}[d]^{\wr}\ar[r]&\Omega_{E[\bullet]}/\widetilde{\Omega}_{E[\bullet]}\ar@{=}[d]^{\wr}\ar[r]&0&\\
    0\ar[r]&\ker(L_\bullet)
    \times\mathbb{Z}_2\ar[d]\ar[r]&{}_{\square}\mathbb{N}\times
    \mathbb{Z}_2\ar[d]\ar[r]&\framebox{$({}_{\square}\mathbb{N}/\ker(L_\bullet))\times\mathbb{Z}_2$}\ar[d]_{\widetilde{d}}\ar[r]&0&\\
    0\ar[r]&\ker(L_\bullet)\ar[d]\ar[r]^{j}&{}_{\square}\mathbb{N}\ar[d]\ar[r]^{L_\bullet}&\mathbb{Z}\ar[r]^{\widetilde{e}}&{\rm coker}(L_\bullet)\ar[r]&0\\
    &0&0&&&\\}$}
\end{equation}
$\widetilde{d}$ is defined by composition:
$$\xymatrix{({}_{\square}\mathbb{N}/\ker(L_\bullet))\times\mathbb{Z}_2\ar@/_2pc/[rr]_{\widetilde{d}}\ar[r]&{}_{\square}\mathbb{N}/\ker(L_\bullet)\ar[r]&\mathbb{Z}\\}$$
Therefore, one has $\IM(\widetilde{d})=\ker(\widetilde{e}) $.
 For a fixed $n\in\ker(L_\bullet)$ set
 $$\ker(L_\bullet)_n=\{n'\in \ker(L_\bullet),\ |\, n'\le n\}.$$
 One has the filtration
 $$\ker(L_\bullet)_5\subset\ker(L_\bullet)_6\subset \ker(L_\bullet)_7\subset\cdots\subset\ker(L_\bullet)_n\subset\cdots$$
  Similarly, for a fixed $n\in\mathbb{N}_{congr}$ set
 $$(\mathbb{N}_{congr})_n=\{n'\in \mathbb{N}_{congr},\ |\, n'\le n\}.$$
 One has the filtration
 $$(\mathbb{N}_{congr})_5\subset(\mathbb{N}_{congr})_6\subset (\mathbb{N}_{congr})_7\subset\cdots\subset(\mathbb{N}_{congr})_n\subset\cdots$$
 Since any strong-congruent number belongs also to $\ker(L_\bullet)$, it follows that for any $n\in\mathbb{N}_{congr}$ one has $(\mathbb{N}_{congr})_n\subseteqq\ker(L_\bullet)_n$. We shall see that the equality holds. This is surely true for $n=5,\, 6,\, 7$.\footnote{Really one can experimentally verify that it holds also for $n>7$. For example, from Tab. \ref{table-examples-of-strong-congruent-numbers} one can see that $(\mathbb{N}_{congr})=\ker(L_\bullet)_n$, for $5\le n\le 65$.}

  Therefore, let us assume that $(\mathbb{N}_{congr})_{n}=\ker(L_\bullet)_n$, for all strong-congruent numbers $n\le n_0\in\mathbb{N}_{congr}$. Let us to prove that it holds also for $n=\widetilde{n}_0$, where $\widetilde{n}_0$ is the first strong-congruent number greater than $n_0$. Therefore, let us assume that there exists a square-free number $m$, contained in $\ker(L_\bullet)_{\widetilde{n}_0}$, such that $m\not\in(\mathbb{N}_{congr})_{\widetilde{n}_0}$. Let us consider, now, the following lemma.
  \begin{lemma}\label{cardinality-ker-l-function}
  $$\sharp(\ker(L_\bullet))=\sharp(\mathbb{N}_{congr})=\aleph_0.$$
  \end{lemma}
  \begin{proof}
  In fact, $\ker(L_\bullet)$ contains an infinite set, namely $\mathbb{N}_{congr}$, and it is contained in $\mathbb{N}$. This is essentially a consequence of the well-known {\em Cantor-Schroeder-Bernstein theorem} \cite{CANTOR1, CANTOR2, CANTOR3, HINKIS} applied to the sequence $\mathbb{N}\supset\ker(L_\bullet)\supset\mathbb{N}_{congr}$, since $\sharp(\mathbb{N})=\sharp(\mathbb{N}_{congr})=\aleph_0$.
  \end{proof}

 From Lemma \ref{cardinality-ker-l-function} we can assume that there exists a one-to-one map $$f:\mathbb{N}_{congr}\to \ker(L_\bullet).$$ Since must be $(\mathbb{N}_{congr})_{n_0}=\ker(L_\bullet)_{n_0}$, one has the following commutative diagram

  \begin{equation}\label{commutative-diagram-for.cardinality}
    \xymatrix@C=2cm{(\mathbb{N}_{congr})_{\widetilde{n}_0}\ar[r]^{f_{\widetilde{n}_0}}&\ker(L_\bullet)_{\widetilde{n}_0}\\
    (\mathbb{N}_{congr})_{n_0}\ar@{=}[r]_{f_{n_0}}\ar@{^{(}->}[u]&\ker(L_\bullet)_{n_0}\ar@{^{(}->}[u]\\}
  \end{equation}
  where $f_n=f|_{(\mathbb{N}_{congr})_{n}}$. On the other hand the mapping $f_{\widetilde{n}_0}$ could not be one-to-one. This contradicts the fact that $f$ realizes the equivalence between $\mathbb{N}_{congr}$ and $\ker(L_\bullet)$. In fact, the mapping $f_{\widetilde{n}_0}$ should be surjective and not only injective. Therefore the assumption that $(\mathbb{N}_{congr})_{\widetilde{n}_0}\not=\ker(L_\bullet)_{\widetilde{n}_0}$ contradicts the fact that must be $(\mathbb{N}_{congr})_{n_0}=\ker(L_\bullet)_{n_0}$. This means that must necessarily be $(\mathbb{N}_{congr})_{n}=\ker(L_\bullet)_{n}$, for any strong-congruent number $n\in\mathbb{N}_{congr}$. Thus, we can conclude that $\mathbb{N}_{congr}=\ker(L_\bullet)$.
\end{proof}
From Lemma \ref{elleiptic-congruent-bordism-groups-and-birch-swinnerton-dyer-conjecture} one has the isomorphism $\Omega_{E[\bullet]}^{\mathbb{Q}}\cong\widetilde{\Omega}_{E[\bullet]}$ and the exactness of the commutative diagram (\ref{commutative-diagram-bordism-related-b-s-d-conjecture-a}). In particular the bottom sequence therein, or the sequence (\ref{introduction-short-exact-sequence}), is exact. It follows that Problem \ref{definition-congruent-number-problem} is solved.
\end{proof}

\begin{remark}[Theorem \ref{main-theorem} vs. the Birch Swinnerton-Dyer conjecture]\label{main-corollary}
Our solution of the congruent problem is strongly related to the Tunnell's theorem, but does not directly refers to the BS-D conjecture. However, since a consequence of the BS-D conjecture is that the framed equalities {\em(\ref{implications})}, in the Tunnell's theorem, are also sufficient to determine strong-congruent numbers, and this conclusion coincides with our Theorem \ref{main-theorem}, we have good chances to argue that the Conjecture \ref{definition-bs-d-conjecture} is true.
\end{remark}

\begin{appendices}

\appendix{\bf Appendix A: The function $\mathbf{L(E,s)}$ and the infinitude of primes.}\label{appendix-a}
\renewcommand{\theequation}{A.\arabic{equation}}
\setcounter{equation}{0}  

In this appendix we shall recall some relations between the Riemann zeta function, its generalization $L(E,s)$ and the proof of the cardinality of the set $P$ of prime numbers.

\begin{adefinition}[Dirichlet character]\label{dirichlet-character}

$\bullet$\hskip 2pt A {\em Dirichlet character} is any function $\chi:\mathbb{Z}\to\mathbb{C}$ such that the following conditions are satisfied:

{\em(i)} There exists a positive integer $\kappa$ {\em(modulus)}, such that $\chi(n)=\chi(n+\kappa)$, $\forall n\in\mathbb{Z}$.

{\em(ii)} $\chi(n)=\left\{
\begin{array}{l}
  0 \mbox{  if $gcd(n,\kappa)>1$}\\
  \not=0 \mbox{  if $gcd(n,\kappa)=1$}\\
\end{array}\right.$

{\em(iii)} $\chi(m\cdot n)=\chi(m)\cdot\chi(n)$, $\forall m,\, n\in\mathbb{Z}$.

$\bullet$\hskip 2pt A character is called {\em principal} if $\chi(n)=\left\{
\begin{array}{l}
  1 \mbox{  if $gcd(n,\kappa)=1$}\\
  0 \mbox{  otherwise}\\
\end{array}\right.$

$\bullet$\hskip 2pt A character is called {\em real} if $\chi:\mathbb{Z}\to\mathbb{R}$, otherwise it is called {\em complex}.

$\bullet$\hskip 2pt The {\em sign} of the character $\chi$ depends on its value at $-1$. $\chi$ is said to be {\em odd} if $\chi(-1)=-1$ and {\em even} if $\chi(-1)=1$.
\end{adefinition}
\begin{aproposition}[Properties of Dirichlet characters]\label{properties-dirichlet-characters}

{\em 1)} $\chi(1)=1$.

{\em 2)} $\chi$ is periodic with period $\kappa$.

{\em 3)}  If $a\equiv b\, {\rm mod}\, \kappa$, then $\chi(a)=\chi(b)$.

{\em 4)}  If $a$ is relatively prime to $\kappa$, ($gcd(a,\kappa)=1$), then $\chi(a)^{\varphi(\kappa)}\equiv 1\, \hbox{\rm mod $\kappa$}$, i.e., $\chi(a)$ is a $\varphi(\kappa)$-th complex root of unity, where $\varphi(\kappa)$ is the totient function.\footnote{This is the {\em Euler's theorem}. In other words $\chi(a)$ can belong to the following finite set of complex numbers: $$\{\cos(\frac{2b\pi}{\varphi(\kappa)})+i\sin(\frac{2b\pi}{\varphi(\kappa)})\}_{b=0,1,2,\cdots,\varphi(\kappa)-1}.$$}

{\em 5) (Trivial character)} There exists an unique character, called trivial character, having modulus $\kappa=1$. Then for such a character one has $\chi(n)=1$, $\forall n\in\mathbb{Z}$.

{\em 6)} For any character, except the trivial one, one has $\chi(0)=0$.

{\em 7) (Relation with character group: Extended residue class characters)} If $\chi:\mathbb{Z}\to\mathbb{C}$ is a Dirichlet character with modulus $\kappa$, it identifies a character group $\chi:\mathbb{Z}_{\kappa}^{\times}\to\mathbb{C}^{\bullet}$, with values that are $\kappa$-roots of unity. Vice versa if $\chi:\mathbb{Z}_{\kappa}^{\times}\to\mathbb{C}^{\bullet}$, is a group homomorphism, then it identifies a Dirichlet character $\chi:\mathbb{Z}\to\mathbb{C}$ with modulus $\kappa$.

$\bullet$\hskip 2pt The associated character of the principal character takes always the value $1$.

$\bullet$\hskip 2pt There are $\varphi(\kappa)$ Dirichlet characters of modulus $\kappa$: $\chi_1,\chi_2,\cdots,\chi_{\varphi(\kappa)}$.\footnote{In Tab. \ref{multiplication-table-in-the-multiplicative-group-integers-mod-10}, as an example, is reported the multiplication table of $\mathbb{Z}_{10}^{\times}$ and in Tab. \ref{dirichlet-characters-mod-10} the four Dirichlet characters modulus $10$.}

{\em 8) (Orthogonality relation)} If $\chi$ is a character modulus $\kappa$, the $\sum_{a\, {\rm mod}\, \kappa}\chi(a)=0$, unless $\chi$ is principal, in which case one has $\sum_{a\, {\rm mod}\, \kappa}\chi(a)=\varphi(\kappa)$.

$\bullet$\hskip 2pt Any periodic function with period $\kappa$, supported on the residue class prime to $\kappa$ is a linear combination of Dirichlet characters.
\end{aproposition}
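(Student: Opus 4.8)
The plan is to derive each listed property directly from the three defining conditions (i)--(iii) of Definition A.1, supplemented by elementary facts about the unit group $(\mathbb{Z}/\kappa\mathbb{Z})^{\times}$. I would dispatch items 1)--6) as immediate formal consequences of the axioms, then upgrade to the group-theoretic statements 7) and 8) by passing to the finite abelian group $(\mathbb{Z}/\kappa\mathbb{Z})^{\times}$, and finally treat the closing spanning assertion via the duality between this group and its character group.

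First I would prove 1): by multiplicativity (iii), $\chi(1)=\chi(1\cdot 1)=\chi(1)^{2}$, and since $\gcd(1,\kappa)=1$ forces $\chi(1)\neq 0$ through (ii), cancellation gives $\chi(1)=1$. Property 2) is literally condition (i), and 3) follows by iterating 2) across the difference $a-b$, which is a multiple of $\kappa$. For 4) I would invoke Euler's theorem $a^{\varphi(\kappa)}\equiv 1 \pmod{\kappa}$ valid when $\gcd(a,\kappa)=1$; then 3) and (iii) give $\chi(a)^{\varphi(\kappa)}=\chi(a^{\varphi(\kappa)})=\chi(1)=1$, so $\chi(a)$ is a $\varphi(\kappa)$-th root of unity. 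Item 5) comes from setting $\kappa=1$: every integer is coprime to $1$, so $\chi$ never vanishes, while period-$1$ periodicity forces $\chi$ constant, hence equal to $\chi(1)=1$. Item 6) is condition (ii) read at $n=0$, since $\gcd(0,\kappa)=\kappa>1$ whenever $\kappa>1$.

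The substance lies in 7), 8) and the final bullet. For 7) I would note that 3) makes $\chi$ constant on residue classes and (ii) makes it vanish off the units, so $\chi$ descends to a map $(\mathbb{Z}/\kappa\mathbb{Z})^{\times}\to\mathbb{C}^{\times}$, which (iii) renders a group homomorphism; conversely any such homomorphism extends by zero to a Dirichlet character, establishing the claimed bijection, under which the principal character corresponds to the trivial homomorphism. The count $\varphi(\kappa)$ in the adjoining bullet then reduces to the standard fact that a finite abelian group $G$ and its dual $\widehat{G}=\mathrm{Hom}(G,\mathbb{C}^{\times})$ have equal order, applied to $G=(\mathbb{Z}/\kappa\mathbb{Z})^{\times}$. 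For the orthogonality relation 8) I would use the permutation trick: if $\chi$ is non-principal, choose $b$ coprime to $\kappa$ with $\chi(b)\neq 1$; since $a\mapsto ba$ permutes the units, $\chi(b)\sum_{a}\chi(a)=\sum_{a}\chi(a)$, so $(\chi(b)-1)\sum_{a}\chi(a)=0$ forces the sum to vanish, whereas for the principal character each of the $\varphi(\kappa)$ units contributes $1$.

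The main obstacle is the closing claim that every $\kappa$-periodic function supported on the units is a linear combination of Dirichlet characters; this is the completeness half of the duality between $G=(\mathbb{Z}/\kappa\mathbb{Z})^{\times}$ and $\widehat{G}$, and it cannot be extracted from the axioms alone. My route would combine three facts: (a) the $\varphi(\kappa)$ distinct characters are linearly independent over $\mathbb{C}$, by Artin's theorem on the independence of distinct characters of a group into a field; (b) the space of functions $G\to\mathbb{C}$ has dimension $|G|=\varphi(\kappa)$; and (c) the number of characters equals $\varphi(\kappa)$, i.e.\ $|\widehat{G}|=|G|$. Items (a)--(c) together force the characters to be a basis, which is exactly the spanning statement. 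The genuinely nontrivial inputs are (c) and the point-separation property tacitly used in 8), both of which I would secure from the structure theorem for finite abelian groups: writing $(\mathbb{Z}/\kappa\mathbb{Z})^{\times}$ as a direct product of cyclic factors and building characters one factor at a time supplies enough characters both to match $|G|$ and to distinguish any nonidentity element. That decomposition is where I expect the real work to lie.
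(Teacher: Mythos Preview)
Your argument is correct and complete. Each of items 1)--8) is handled properly: the formal consequences of the axioms for 1)--6), the passage to $(\mathbb{Z}/\kappa\mathbb{Z})^{\times}$ for 7), the permutation trick for 8), and the combination of Artin's independence of characters with the dimension count for the final spanning assertion are all sound.

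The paper, however, does not actually supply a proof of this proposition. It is stated as a list of standard properties of Dirichlet characters and left without a proof environment; the only accompanying material is a pair of illustrative tables for $\mathbb{Z}_{10}^{\times}$. So there is nothing to compare against: you have written out in full what the paper simply assumes as well known.
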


\begin{table}[t]
\caption{Multiplication table in $\mathbb{Z}^{\times}_{10}=\{1,3,7,9\}=\{m\}$.}
\label{multiplication-table-in-the-multiplicative-group-integers-mod-10}
\scalebox{0.8}{$\begin{tabular}{|c||c|c|c|c|}
\hline
\hfil{\rm{\footnotesize $m\,\diagdown\, m$}}\hfil&\hfil{\rm{\footnotesize $1$}}\hfil&\hfil{\rm{\footnotesize $3$}}\hfil&\hfil{\rm{\footnotesize $7$}}\hfil&\hfil{\rm{\footnotesize $9$}}\hfil\\
\hline\hline
{\rm{\footnotesize $1$}}\hfil&\hfil{\rm{\footnotesize $1$}}\hfil&\hfil{\rm{\footnotesize $3$}}\hfil&\hfil{\rm{\footnotesize $7$}}\hfil&\hfil{\rm{\footnotesize $9$}}\hfil\\
\hline
\hfil{\rm{\footnotesize $3$}}\hfil&\hfil{\rm{\footnotesize $3$}}\hfil&\hfil{\rm{\footnotesize $9$}}\hfil&\hfil{\rm{\footnotesize $1$}}\hfil&\hfil{\rm{\footnotesize $7$}}\hfil\\
\hline
\hfil{\rm{\footnotesize $7$}}\hfil&\hfil{\rm{\footnotesize $7$}}\hfil&\hfil{\rm{\footnotesize $1$}}\hfil&\hfil{\rm{\footnotesize $9$}}\hfil&\hfil{\rm{\footnotesize $3$}}\hfil\\
\hline
\hfil{\rm{\footnotesize $9$}}\hfil&\hfil{\rm{\footnotesize $9$}}\hfil&\hfil{\rm{\footnotesize $7$}}\hfil&\hfil{\rm{\footnotesize $3$}}\hfil&\hfil{\rm{\footnotesize $1$}}\hfil\\
\hline
\multicolumn{5}{l}{\rm{\footnotesize $1^{-1}=1$; $3^{-1}=7$;}}\\
\multicolumn{5}{l}{\rm{\footnotesize $7^{-1}=3$; $9^{-1}=9$.}}\\
\end{tabular}$}
\end{table}

\begin{table}[t]
\caption{Dirichlet characters mod $10$.}
\label{dirichlet-characters-mod-10}
\scalebox{0.8}{$\begin{tabular}{|c||c|c|c|c|c|c|c|c|c|c|}
\hline
\hfil{\rm{\footnotesize $\chi(n)\,\diagdown\, n$}}\hfil&\hfil{\rm{\footnotesize $0$}}\hfil&\hfil{\rm{\footnotesize $1$}}\hfil&\hfil{\rm{\footnotesize $2$}}\hfil&\hfil{\rm{\footnotesize $3$}}\hfil&\hfil{\rm{\footnotesize $4$}}\hfil&\hfil{\rm{\footnotesize $5$}}\hfil&\hfil{\rm{\footnotesize $6$}}\hfil&\hfil{\rm{\footnotesize $7$}}\hfil&\hfil{\rm{\footnotesize $8$}}\hfil&\hfil{\rm{\footnotesize $9$}}\hfil\\
\hline\hline
{\rm{\footnotesize $\chi_1(n)$}}\hfil&\hfil{\rm{\footnotesize $0$}}&\hfil{\rm{\footnotesize $1$}}&{\rm{\footnotesize $0$}}\hfill&{\rm{\footnotesize $1$}}\hfill&\hfil{\rm{\footnotesize $0$}}\hfil&\hfil{\rm{\footnotesize $0$}}\hfil&\hfil{\rm{\footnotesize $0$}}\hfil&\hfil{\rm{\footnotesize $1$}}\hfil&\hfil{\rm{\footnotesize $0$}}\hfil&\hfil{\rm{\footnotesize $1$}}\hfil\\
\hline
\hfil{\rm{\footnotesize $\chi_2(n)$}}\hfil&\hfil{\rm{\footnotesize $0$}}\hfil&\hfil{\rm{\footnotesize $1$}}\hfil&\hfil{\rm{\footnotesize $0$}}\hfil&\hfil{\rm{\footnotesize $i$}}\hfil&\hfil{\rm{\footnotesize $0$}}\hfil&\hfil{\rm{\footnotesize $0$}}\hfil&\hfil{\rm{\footnotesize $0$}}\hfil&\hfil{\rm{\footnotesize $-i$}}\hfil&\hfil{\rm{\footnotesize $0$}}\hfil&\hfil{\rm{\footnotesize $-1$}}\hfil\\
\hline
\hfil{\rm{\footnotesize $\chi_3(n)$ }}\hfil&\hfil{\rm{\footnotesize $0$}}\hfil&\hfil{\rm{\footnotesize $1$}}\hfil&\hfil{\rm{\footnotesize $0$}}\hfil&\hfil{\rm{\footnotesize $-1$}}\hfil&\hfil{\rm{\footnotesize $0$}}\hfil&\hfil{\rm{\footnotesize $0$}}\hfil&\hfil{\rm{\footnotesize $0$}}\hfil&\hfil{\rm{\footnotesize $-1$}}\hfil&\hfil{\rm{\footnotesize $0$}}\hfil&\hfil{\rm{\footnotesize $1$}}\hfil\\
\hline
\hfil{\rm{\footnotesize $\chi_4(n)$ }}\hfil&\hfil{\rm{\footnotesize $0$}}\hfil&\hfil{\rm{\footnotesize $1$}}\hfil&\hfil{\rm{\footnotesize $0$}}\hfil&\hfil{\rm{\footnotesize $-i$}}\hfil&\hfil{\rm{\footnotesize $0$}}\hfil&\hfil{\rm{\footnotesize $0$}}\hfil&\hfil{\rm{\footnotesize $0$}}\hfil&\hfil{\rm{\footnotesize $i$}}\hfil&\hfil{\rm{\footnotesize $0$}}\hfil&\hfil{\rm{\footnotesize $-1$}}\hfil\\
\hline
\multicolumn{11}{l}{\rm{\footnotesize $\varphi(10)=4$.}}\\
\end{tabular}$}
\end{table}

\begin{atheorem}[Cardinality of the primes set $P$]\label{cardinality-primes}
The set $P$ of prime numbers has the same cardinality of $\mathbb{N}$: $\sharp(P)=\aleph_0$.
\end{atheorem}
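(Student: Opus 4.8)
The plan is to follow the route signalled in the footnote to Lemma \ref{cardinality-square-free-congruent-numbers}, namely to derive the infinitude of $P$ from the Euler product for the Riemann zeta function together with the divergence of the harmonic series, and then to upgrade ``infinite'' to ``countably infinite'' using that $P\subset\mathbb{N}$. The argument is a reductio ad absurdum: finiteness of $P$ would force $\zeta(s)$ to stay bounded as $s\to 1^+$, contradicting its blow-up there.

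First I would recall the Euler product, valid for $\Re(s)>1$:
\[
\zeta(s)=\sum_{n\ge 1}\frac{1}{n^s}=\prod_{p\in P}\frac{1}{1-p^{-s}},
\]
which is the multiplicative counterpart of the Dirichlet series $\sum_{n\ge 1}\mu(n)\,n^{-s}=1/\zeta(s)$ already recorded in Proposition \ref{properties-mobius-function}. The identity rests on unique prime factorization in $\mathbb{Z}$: expanding each geometric factor $(1-p^{-s})^{-1}=\sum_{k\ge 0}p^{-ks}$ and multiplying out, every integer $n\ge 1$ contributes exactly once through its prime decomposition.

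Next, suppose for contradiction that $P$ were finite, say $P=\{p_1,\dots,p_N\}$. Then the right-hand side is a finite product of functions of $s$, each continuous on $s\ge 1$ and tending to the finite value $(1-p_i^{-1})^{-1}$ as $s\to 1^+$; hence $\zeta(s)$ would remain bounded as $s\to 1^+$. On the other hand, restricting to real $s>1$ and using $\zeta(s)>\sum_{n=1}^{M}1/n^s$ for every fixed $M$, the limit $s\to 1^+$ gives $\liminf_{s\to 1^+}\zeta(s)\ge\sum_{n=1}^{M}1/n$, and letting $M\to\infty$ yields $\lim_{s\to 1^+}\zeta(s)=+\infty$ by divergence of the harmonic series (equivalently, comparison with $\int_1^\infty x^{-s}\,dx=1/(s-1)$). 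This contradiction forces $P$ to be infinite. Finally, since $P\subset\mathbb{N}$ it is at most countable, and being infinite it is countably infinite, so $\sharp(P)=\sharp(\mathbb{N})=\aleph_0$.

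The main obstacle is not conceptual but analytic bookkeeping: justifying convergence of the infinite product and its equality with the sum for $\Re(s)>1$, and making the two one-sided limits $s\to 1^+$ rigorous simultaneously. Both are standard and shallow, handled by the locally uniform convergence of $\sum_{n\ge 1}n^{-s}$ on $\Re(s)>1$ together with a monotone-convergence argument along the real ray $s\downarrow 1$; no use of the Dirichlet characters of Definition \ref{dirichlet-character} is needed for this particular statement.
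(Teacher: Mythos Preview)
Your argument is correct. Both you and the paper deduce infinitude of $P$ from the Euler product $\zeta(s)=\prod_{p\in P}(1-p^{-s})^{-1}$ for $\Re(s)>1$, and then conclude $\sharp(P)=\aleph_0$ from $P\subset\mathbb{N}$. The difference lies in where you read off the contradiction: you let $s\to 1^+$ and invoke divergence of the harmonic series to force $\zeta(s)\to+\infty$, which is incompatible with a finite product. The paper instead evaluates at the single point $s=2$, using $\zeta(2)=\pi^2/6$: a finite product $\prod_{p\in P}(1-p^{-2})^{-1}$ would be rational, whereas $\pi^2/6$ is irrational. Your route is more elementary, needing only the divergence of $\sum 1/n$ and a soft limit argument; the paper's route is slicker in that it avoids any limiting process at the boundary of convergence, but it imports two nontrivial facts (the Basel evaluation and the irrationality of $\pi^2$). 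Either way the Euler product is doing the real work, and the Dirichlet characters of Definition~A\ref{dirichlet-character} are, as you note, irrelevant here.
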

\begin{proof}
Since $P\subset\mathbb{N}$ it is enough to prove that $P$ is an infinite set.
There are different proofs for the infinitude of $P$, other than first one given by Euclide. Let us give here the one related to the following lemma.
\begin{alemma}[Euler product]\label{euler-product}
If $f:\mathbb{N}\to\mathbb{R}$ is a multiplicative function such that $\sum_{1\le n\le \infty}f(n)$ converges absolutely, then one has the following formula {\em(Euler product for $\sum_{1\le n\le \infty}f(n)$)}:
\begin{equation}\label{euler-product-formula}
  \sum_{1\le n\le \infty}f(n)=\prod_{p\in P}(1+f(p)+f(p^2)+\cdots).
\end{equation}
\end{alemma}
\begin{proof}
The proof is standard. However, it is educationally useful to give an explicit proof also. Let $q\in\mathbb{N}$ and $p_1,\cdots,p_k\in P$ be all the primes in $[1,q]\subset\mathbb{N}$. Set $P_q=\{p\in P\, |\, p<q\}$. Then we can write
$$
\left\{
\begin{array}{ll}
  \prod_{p\in P_q}(1+f(p)+f(p^2)+\cdots) &=\sum_{r_1}f(p_1^{r_1})\sum_{r_2}f(p_2^{r_2})\cdots \sum_{r_k}f(p_k^{r_k}) \\
   & =\sum_{r_1,r_2,\cdots,r_k}f(p_1^{r_1})f(p_2^{r_2})\cdots f(p_k^{r_k})\\
   &=\sum_{r_1,r_2,\cdots,r_k}f(p_1^{r_1}p_2^{r_2}\cdots p_k^{r_k})\\
   &=P_q[n].\\
\end{array}\right.$$

Since for any $n\in\mathbb{N}$, such that $n<q$, the corresponding prime decomposition has no factors greater than $q$, we can write
$$|\sum_{1\le n\le \infty}f(n)-P_q[n]|\le \sum_{q\le n\le \infty}|f(n)|.$$
Taking into account that $\mathop{\lim}\limits_{q\to\infty}\sum_{q\le n\le \infty}|f(n)|=0$, we get the proof.
\end{proof}
Let us recall now that the Riemann  zeta function is defined by means of the series $\zeta(s)=\sum_{1\le n\le \infty}\frac{1}{n^s}$. Since this series converges absolutely for $\Re(s)>1$,we get the Euler product formula for the zeta functions:
\begin{equation}\label{appendix-a-zeta-euler-product-formula}
    \zeta(s)=\prod_{p\in P}\frac{1}{1-p^{-s}},\, \Re(s)>1.
\end{equation}
In particular for $s=2$ one has
\begin{equation}\label{appendix-a-zeta-euler-product-formula-a}
    \zeta(2)=\sum_{n}=\frac{1}{n^2}=\frac{\pi^2}{6}.
\end{equation}
Thus $\zeta(2)$ is an irrational number that can be represented by $\zeta(2)=\prod_{p\in P}\frac{1}{1-p^{-2}}$. Therefore must necessarily be $P$ an infinite set.\footnote{With this respect let us recall that $\mathbb{Q}$ is not a complete subspace of $\mathbb{R}$, having empty interior, $\mathop{\mathbb{Q}}\limits^{\circ}=\varnothing$, and with closure $\mathbb{R}$, $\overline{\mathbb{Q}}=\mathbb{R}$. Therefore $\partial \mathbb{Q}=\mathbb{R}$ and $\zeta(2)\in\partial \mathbb{Q}$.}
\end{proof}

\begin{adefinition}\label{dirichlet-series}
A {\em Dirichlet series} is defined by $\sum_{1\le n\le \infty}\frac{a_n}{n^s}$, $s\in \mathbb{C}$, $a_n\in\mathbb{C}$.
\end{adefinition}
\begin{aexample}\label{riemann-zeta-function}
The Riemann zeta function $\zeta(s)=\sum_{1\le n\le \infty}\frac{1}{n^s}$ is a Dirichlet series.
\end{aexample}
\begin{aexample}\label{dirichlet-l-series}
The Dirichlet $L$-series $L(s,\chi)=\sum_{1\le n\le \infty}\frac{\chi(n)}{n^s}$, $s\in \mathbb{C}$, with $\chi$ a Dirichlet character, in the sense of Definition A\ref{dirichlet-character}, is a Dirichlet series. $L(s,\chi)$ converges absolutely and conformally for $\Re(s)\ge 1+\delta$ (for any positive $\delta$) and admits the Euler product {\em(\ref{appendix-a-euler-product-for-dirichlet-l-series})}.
\begin{equation}\label{appendix-a-euler-product-for-dirichlet-l-series}
    L(s,\chi)=\prod_{p\in P}\frac{1}{1-\chi(p)p^{-s}}.
\end{equation}
The Dirichlet $L$-series can be extended by continuation to a meromorphic function on $\mathbb{C}$, yet denoted $L(s,\chi)$. In particular if $\chi=\chi_0$ is the trivial character, hence $\chi(n)=1$, one has $L(s,\chi_0)=\zeta(s)\prod(1-p^{-s})$. So we can write the Riemann zeta function $\zeta(s)=L(s,\chi)/\prod(1-p^{-s})$. Therefore $L(s,\chi)$ are generalizations of the Riemann zeta function. Similarly, one can define for any elliptic function $E$ a generalized Dirichlet function $L(s,E)$.
\end{aexample}

\appendix{\bf Appendix B: Riemann surfaces and modular curves.}\label{appendix-b}
\renewcommand{\theequation}{B.\arabic{equation}}
\setcounter{equation}{0}  

In this appendix we shall recall some distinguished Riemann surfaces called {\em modular curves} that are strictly related to the modular characterization of elliptic curves in $\mathbb{C}$.
Let us start with some fundamental definition and results about coverings.

\begin{bdefinition}\label{appendix-b-action-group}
$\bullet$\hskip 2pt A group $G$ {\em acts} (on the left) on a set $X$ if there is a function $\alpha:G\times X\to X$ such that the following diagrams commute:
\begin{equation}\label{action-group-diagrams}
  {\rm(i)}\hskip 0.5cm \xymatrix@C=2cm{X\ar[rd]_{1_X}\ar[r]^(0.4){(e,1)}&G\times X\ar[d]_{\alpha}\\
  &X\\}\hskip 1cm
 {\rm(ii)}\hskip 0.5cm  \xymatrix@C=2cm{G\times G\times X\ar[d]_{\mu\times 1_X}\ar[r]^(0.5){1_G\times\alpha}&G\times X\ar[d]_{\alpha}\\
 G\times X\ar[r]_{\alpha} &X\\}
 \end{equation}
 where $\mu:G\times G\to G$ is the multiplication map and $(e,1)(x)=(e,x)$, for all $x\in X$, $e$ being the identity of $G$. The {\em orbit} for $x\in X$ is the set $Gx=\{gx\, |\, g\in G\}$.

$\bullet$\hskip 2pt A topological group $G$ {\em acts} on a topological space $X$ if there is a continuous action map $\alpha$ respecting above properties {\rm(i)} and {\rm(ii)}.

$\bullet$\hskip 2pt A discrete group $G$ with identity $e$ is said to {\em act properly discontinuously} if the following propositions hold:

{\rm(a)} For every $x\in X$ there is a neighbourhood $U_x$ such that $gU_x\bigcap U_x\not=\varnothing\, \Rightarrow\, g=e$.

{\rm(b)} For every $x,\, y\in X$, $y\not\in Gx$, there are neighbourhoods $V_x$, $V_y$ of $x$ and $y$ respectively such that $gV_x\bigcap V_y=\varnothing$, all $g\in G$.\footnote{The property (b) implies $X/G$ is Hausdorff and the property (a) implies the projection $q:X\to X/G$ is a {\em covering}, i.e., the fibre $F=q^{-1}(p)$ over $p\in X/G$ is discrete.}
\end{bdefinition}

\begin{bproposition}[Relations between group actions and homotopy groups]\label{appendix-b-relations-group-action-homotopy-groups}
$\bullet$\hskip 2pt Assume $X$ is a $0$-connected topological space. One has a homomorphism
\begin{equation}\label{appendix-b-connecting-homomorphism}
  \partial:\pi_1(X/G,*)\to\pi_0(F,x_0)=G
\end{equation}
and the following exavt sequence
\begin{equation}\label{appendix-b-connecting-homomorphism-a}
  \xymatrix{0\ar[r]&\pi_1(X,x_0)\ar[r]&\pi_1(X/G,*)\ar[r]^(0.6
  ){\partial}&G\ar[r]&0\\}
 \end{equation}

$\bullet$\hskip 2pt If $\pi_1(X,x_0)=0$, then $\pi_1(X/G,*)\cong G$ and we have an action of $G$ on $\pi_n(X/G,*)$:
 \begin{equation}\label{appendix-b-connecting-homomorphism-b}
  \xymatrix{G\times\pi_n(X/G,*)\ar[d]_{1\times\partial}\ar[r]&\pi_n(X/G,*)\ar[d]_{\partial}\\
  G\times\pi_{n-1}(F,x_0)\ar[r]&\pi_{n-1}(F,x_0)\\}
 \end{equation}
 The induced map $q_*:\pi_n(X,x_0)\to\pi_n(X/G,*)$ is a homomorphism of groups with $G$-action
 \begin{equation}\label{appendix-b-connecting-homomorphism-c}
q_*(g\gamma)=g\cdot q_*(\gamma),\, g\in G,\, \gamma\in\pi_n(X,x_0)
 \end{equation}
  such that the following diagram is commutative
  \begin{equation}\label{appendix-b-connecting-homomorphism-d}
    \xymatrix@C=2cm{S^n\ar[rd]_{g\gamma}\ar[r]^{\gamma}&X\ar[d]^{T_g}\\
    &X\\}
  \end{equation}
  where $T_g:X\to X$ is defined by $T_g(x)=gx$.
\end{bproposition}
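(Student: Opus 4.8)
The plan is to realise the projection $q:X\to X/G$ as the covering map furnished by the properly discontinuous action (the footnote to Definition~B\ref{appendix-b-action-group}), and to run the homotopy long exact sequence of $q$ viewed as a fibration. First I would record the structure of the fibre. Condition (a) of properly discontinuity makes the action free, and since $X$ is $0$-connected $G$ acts transitively on each fibre; hence $F=q^{-1}(p)$ is a $G$-torsor. Fixing $x_0\in F$ identifies $F\cong G$ as pointed sets, so $\pi_0(F,x_0)=G$ while $\pi_k(F,x_0)=0$ for $k\ge 1$ because $F$ is discrete. This is exactly the regular (Galois) covering attached to $G$, with deck group $G$.

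For the first bullet, a covering map is a fibration, so it carries the exact sequence
\begin{equation*}
\cdots\to\pi_n(F,x_0)\to\pi_n(X,x_0)\xrightarrow{q_*}\pi_n(X/G,*)\xrightarrow{\partial}\pi_{n-1}(F,x_0)\to\cdots
\end{equation*}
and I would simply read off its low-degree tail. Using $\pi_1(F,x_0)=0$, $\pi_0(F,x_0)=G$ and $\pi_0(X,x_0)=0$, exactness collapses the tail to
\begin{equation*}
0\to\pi_1(X,x_0)\xrightarrow{q_*}\pi_1(X/G,*)\xrightarrow{\partial}G\to 0 .
\end{equation*}
Here $\partial$ is the monodromy: lift a loop $\alpha$ at $*$ to the unique path $\tilde\alpha$ in $X$ with $\tilde\alpha(0)=x_0$, and let $\partial[\alpha]$ be the unique $g\in G$ with $\tilde\alpha(1)=g\,x_0$. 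I would verify $\partial$ is a homomorphism by lifting a concatenation $\alpha\ast\beta$ and noting that the lift of $\beta$ starting at $g_\alpha x_0$ is the deck translate $T_{g_\alpha}$ of the lift of $\beta$; injectivity of $q_*$ is the standard fact that a null-homotopy downstairs lifts upstairs. Equivalently, this short exact sequence is the Galois correspondence for the normal covering $q$, whose deck group is $G$.

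For the second bullet, assuming $\pi_1(X,x_0)=0$ the sequence gives $\partial:\pi_1(X/G,*)\xrightarrow{\ \sim\ }G$, so $X$ is the universal cover of $X/G$. From the same long exact sequence, $\pi_n(F)=\pi_{n-1}(F)=0$ for $n\ge 2$ forces $q_*:\pi_n(X,x_0)\xrightarrow{\ \sim\ }\pi_n(X/G,*)$ to be an isomorphism in those degrees. I would then take the standard action of $\pi_1(X/G,*)$ on the higher groups $\pi_n(X/G,*)$ and transport it through the isomorphism $\partial$ to obtain the announced $G$-action, compatible with $\partial$ as in diagram (\ref{appendix-b-connecting-homomorphism-b}). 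To match this with deck transformations I use $q\circ T_g=q$: the homeomorphism $T_g$ carries $x_0$ to $g\,x_0$, and since $X$ is simply connected the change-of-basepoint isomorphism $\pi_n(X,g\,x_0)\cong\pi_n(X,x_0)$ is canonical, so $\gamma\mapsto T_g\circ\gamma$ (diagram (\ref{appendix-b-connecting-homomorphism-d})) defines the $G$-action on $\pi_n(X,x_0)$; the equivariance (\ref{appendix-b-connecting-homomorphism-c}) then follows from naturality of $q_*$ together with $q\circ T_g=q$.

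The main obstacle is bookkeeping with basepoints. The connecting map $\partial$ must be promoted from a morphism of pointed sets to a group homomorphism, which requires the torsor structure of $F$ and the monodromy description above; and the deck action on $\pi_n(X,x_0)$ is only well defined after the canonical (simple-connectivity) identification of the fibres over $x_0$ and $g\,x_0$. Once these identifications are fixed, every square in (\ref{appendix-b-connecting-homomorphism-b}) and (\ref{appendix-b-connecting-homomorphism-d}) commutes formally, and the remaining verifications reduce to functoriality of $q_*$ and the relation $q\circ T_g=q$.
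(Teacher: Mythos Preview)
Your argument is correct and is the standard covering-space proof one would give here: identify $q:X\to X/G$ as a covering (hence a fibration), read off the short exact sequence from the tail of the homotopy long exact sequence using $\pi_k(F)=0$ for $k\ge 1$ and $\pi_0(F)\cong G$, and then transport the usual $\pi_1$-action on higher homotopy groups through the isomorphism $\partial$.

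Note, however, that the paper itself provides no proof of this proposition; Appendix~B is expository and simply records these facts from elementary covering-space theory without argument. So there is nothing to compare your approach against --- you have supplied exactly the standard justification the paper is tacitly invoking. Your care with basepoints (promoting $\partial$ to a homomorphism via the torsor structure of $F$, and using simple-connectivity of $X$ to make the change-of-basepoint identification canonical) is appropriate and fills in precisely the details one would need.
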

\begin{bdefinition}\label{appendix-b-regular-covering}
A covering $p:\tilde X\to X$ is called {\em regular} if $p_*(\pi_1(\tilde X,\tilde x_0))\subset\pi_1(X,x_0)$.
\end{bdefinition}

\begin{bproposition}[Properties of regular coverings]\label{appendix-b-properties-regular-coverings}
$\bullet$\hskip 2pt A discrete group $G$ with identity that acts properly discontinuously on the topological space $X$ induces a regular covering $X\to X/G$. Vice versa every regular covering $X\to X/G$, $G$ acting on $X$ without fixed points (i.e., $gx=x\, \Rightarrow\, g=e$), and $X$ Hausdorff, has $G$ acting properly discontinuously. Therefore, $X\to X/G$ is a regular covering.
\end{bproposition}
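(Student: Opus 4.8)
The plan is to establish the two implications separately, writing $q\colon X\to X/G$ for the quotient projection and using the homeomorphisms $T_g\colon X\to X$, $T_g(x)=gx$, introduced in Proposition~\ref{appendix-b-relations-group-action-homotopy-groups}; these satisfy $q\circ T_g=q$, so the fibre of $q$ over $q(x)$ is exactly the orbit $Gx$. Throughout I read \emph{regular} in the sense of the exact sequence (\ref{appendix-b-connecting-homomorphism-a}), i.e.\ that $q_*\pi_1(X,x_0)$ sits in $\pi_1(X/G,*)$ as the kernel of the boundary map $\partial$ of (\ref{appendix-b-connecting-homomorphism}), hence as a normal subgroup.

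For the forward implication I would first turn condition (a) of Definition~\ref{appendix-b-action-group} into explicit evenly covered neighbourhoods. Fix $x\in X$ and let $U_x$ be the neighbourhood provided by (a). The translates $\{T_gU_x\}_{g\in G}$ are pairwise disjoint, for $T_gU_x\cap T_hU_x\neq\varnothing$ yields $T_{h^{-1}g}U_x\cap U_x\neq\varnothing$ and hence $h^{-1}g=e$; moreover $q^{-1}(q(U_x))=\bigcup_{g\in G}T_gU_x$ is open, so $q(U_x)$ is open and $q$ carries each $T_gU_x$ homeomorphically onto $q(U_x)$. Thus $q(U_x)$ is evenly covered and $q$ is a covering, as already announced in the footnote to Definition~\ref{appendix-b-action-group}. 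Regularity is then immediate: $G$ acts transitively on each fibre $Gx$ by deck transformations, and when $X$ is $0$-connected the sequence (\ref{appendix-b-connecting-homomorphism-a}) exhibits $q_*\pi_1(X,x_0)=\ker\partial$ as a normal subgroup of $\pi_1(X/G,*)$, so the covering is regular.

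For the converse I assume $q$ is a regular covering, that $G$ acts without fixed points, and that $X$ is Hausdorff, and I must recover (a) and (b). Condition (a) is straightforward: choose an evenly covered neighbourhood $W$ of $q(x)$ and let $U_x$ be the sheet over $W$ containing $x$, so $q|_{U_x}$ is injective; if $T_gu\in U_x$ for some $u\in U_x$ then $q(T_gu)=q(u)$ forces $T_gu=u$, and freeness gives $g=e$. For condition (b), given $y\notin Gx$ I must produce $V_x,V_y$ with $T_gV_x\cap V_y=\varnothing$ for \emph{all} $g\in G$; since $q$ is open this is equivalent to $q(V_x)\cap q(V_y)=\varnothing$, that is, to separating $q(x)$ and $q(y)$ in $X/G$, which by the footnote to Definition~\ref{appendix-b-action-group} is exactly the assertion that $X/G$ is Hausdorff.

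The main obstacle is precisely this condition (b). For each fixed $g$ the graph $\{(u,T_gu):u\in X\}$ is closed in the Hausdorff space $X\times X$, so $(x,y)$ has a product neighbourhood missing that single graph; the difficulty is to achieve the disjointness simultaneously for the (possibly infinite) family of all $g\in G$. I would attack this by passing to evenly covered neighbourhoods of $q(x)$ and $q(y)$ and using the local product (sheet) decomposition of $q$ to control which translates $T_gV_x$ can meet a small $V_y$, shrinking $V_x,V_y$ accordingly; equivalently, I would isolate as a lemma the statement that the orbit relation $R=\bigcup_{g\in G}\{(u,T_gu)\}$ is closed, and derive it from the properness implicit in the covering together with the Hausdorffness of $X$. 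This is the one genuinely delicate step, since closedness of an infinite union of closed graphs does not follow formally and must be extracted from the covering structure.
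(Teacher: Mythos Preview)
The paper states this proposition without proof; it is appendix material reviewing standard facts from covering-space theory, and the text passes directly from the statement to Example~\ref{real-numbers-over-circle}. So there is no argument in the paper to compare yours against.

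Your forward implication and your derivation of condition~(a) in the converse are correct. The only point you leave open is condition~(b), and the sheet-based attack you outline does in fact close easily; the step is less delicate than you fear. Given $y\notin Gx$, take an evenly covered $W\ni q(y)$ and let $U\subset q^{-1}(W)$ be the sheet containing $y$. Since $q|_U$ is injective, $Gx\cap U$ has at most one point. If $Gx\cap U=\varnothing$ then $q(x)\notin W$, so $V_x:=X\setminus q^{-1}(W)$ is an open neighbourhood of $x$; with $V_y:=U$ one has $q(gV_x)=q(V_x)\subset (X/G)\setminus W$ for every $g$, hence $gV_x\cap V_y=\varnothing$. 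If instead $Gx\cap U=\{g_0x\}$, use Hausdorffness of $X$ to choose disjoint open $A\ni g_0x$ and $B\ni y$ inside $U$, and set $V_x:=g_0^{-1}A$, $V_y:=B$; any point of $gV_x\cap V_y$ would give two distinct points of $U$ with the same image under $q|_U$, a contradiction. Thus no infinite intersection or closedness-of-orbit-relation lemma is needed: a single sheet already forces at most one translate of $x$ to be near $y$, and that one is handled by Hausdorffness of $X$.
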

\begin{bexample}\label{real-numbers-over-circle}
$\mathbb{Z}$ acts properly discontinuously on $\mathbb{R}$: $n\cdot r=r+n$. Then $\mathbb{R}\to \mathbb{R}/\mathbb{Z}$ is a regular covering. Since $\pi_1(\mathbb{R},x_0)=0$, then from exact sequence {\em(\ref{appendix-b-connecting-homomorphism-a})} it follows $\pi_1(\mathbb{R}/\mathbb{Z},*)\cong\mathbb{Z}$. This means that $\mathbb{R}/\mathbb{Z}\cong S^1$, hence $\mathbb{R}\to S^1$ is an universal covering too, since $\mathbb{R}$ is simply connected.
\end{bexample}
\begin{bexample}[Riemann surfaces as universal coverings of Riemann surfaces]\label{riemann-surfaces-as-universl-coverings-of-riemann-surfaces}
$\bullet$\hskip 2pt From the classification Poincar\'e-Koebe theorem one knows that every connected Riemann surface $X$ admits a unique complete $2$-dimensional real Riemann metric with constant curvature $-1$, $0$, or $1$, inducing the same conformal structure - every metric is conformally equivalent to a constant curvature metric as reported in Tab. \ref{metric-classification-riemann-surfaces}.

$\bullet$\hskip 2pt In Tab. \ref{metric-classification-riemann-surfaces} are reported also the corresponding examples with $\pi_1(X,x_0)=0$. In particular, the uniformization theorem states that any connected Riemann surfaces $Y$ admits as universal covering one of the three fundamental types reported there, i.e., $Y$ is biholomorphic to $X/G$ for some discrete group $G$.

$\bullet$\hskip 2pt {\em Modular curves} are just Riemann surfaces $Y$ that can be identified with $H/\Gamma$ for some congruence group $\Gamma$.\footnote{The group $G=GL_2^+(\mathbb{R})$ acts on $H$ by fractional linear transformations of $H$: $(g,z)\mapsto g(z)=\frac{az+b}{cz+d}$
if $g=\left(\begin{array}{cc} %
 a & b\\
 c  & d \end{array}\right)$, $\det(g)>0$.}
\end{bexample}
\begin{table}[t]
\caption{Distinguished congruence groups and properties: $\Gamma(N)\subset\Gamma_1(N)\subset\Gamma_0(N)\subset\Gamma(1)$.}
\label{distinguished-congruence-groups-and-properties}
\scalebox{0.8}{$\begin{tabular}{|l|l|l|}
\hline
{\rm{\footnotesize Symbol}}&{\rm{\footnotesize Quotient isomorphisms}}&{\rm{\footnotesize Indexes}}\hfill\\
\hline\hline
{\rm{\footnotesize $\Gamma_0(N)$}}&{\rm{\footnotesize $\Gamma_1(N)/\Gamma(N)\cong\mathbb{Z}/N\mathbb{Z}$}}&{\rm{\footnotesize $[\Gamma(1):\Gamma(N)]=\sharp SL_2(\mathbb{Z}/N\mathbb{Z})=N^3\prod_{p|N}(1-\frac{1}{p^2})$}}\hfill\\
                                 \hline
{\rm{\footnotesize $\Gamma_1(N)$}}&{\rm{\footnotesize $\Gamma(1)/\Gamma(N)\cong SL_2(\mathbb{Z}/N\mathbb{Z})$}}&{\rm{\footnotesize $[\Gamma_0(N):\Gamma_1(N)]=\phi(N)=N\prod_{p|N}(1-\frac{1}{p})$}}\hfill\\
                                 \hline
{\rm{\footnotesize $\Gamma(N)\le\Gamma(1)$}}&{\rm{\footnotesize $\Gamma_0(N)/\Gamma_1(N)\cong(\mathbb{Z}/N\mathbb{Z})^\times$}}&{\rm{\footnotesize $[\Gamma(1):\Gamma_0(N)]=\psi(N)=N\prod_{p|N}(1+\frac{1}{p})$}}\hfill\\
\hline
{\rm{\footnotesize }}&{\rm{\footnotesize }}&{\rm{\footnotesize $\mu(\Gamma_0(N))=\psi(N)$.}}\hfill\\
\hline
{\rm{\footnotesize }}&{\rm{\footnotesize }}&{\rm{\footnotesize $\mu(\Gamma_1(N))=\frac{1}{2}\phi(N)\psi(N)$, if $N\ge 3$.}}\hfill\\
\hline
{\rm{\footnotesize }}&{\rm{\footnotesize }}&{\rm{\footnotesize $\mu(\Gamma(N))=\frac{1}{2}N\phi(N)\psi(N)$, if $N\ge 3$.}}\hfill\\
\hline
{\rm{\footnotesize $\Gamma^0(N)=S^{-1}\Gamma_0(N)S$}}&{\rm{\footnotesize }}&{\rm{\footnotesize }}\hfill\\
\hline
{\rm{\footnotesize $\Gamma^1(N)=S^{-1}\Gamma_1(N)S$}}&{\rm{\footnotesize }}&{\rm{\footnotesize }}\hfill\\
\hline
\multicolumn{3}{l}{\rm{\footnotesize $\Gamma_0(N)$ defined in (\ref{definition-gamma-0-n});  $\Gamma_1(N)$ defined in (\ref{definition-gamma-1-n}).}}\hfill\\
\multicolumn{3}{l}{\rm{\footnotesize $[\Gamma_1(N):\Gamma(N)]=N$. $-1\in\Gamma_0(N)$ and $-1\not\in\Gamma_1(N)$, for $N\ge 3$.
 $S=\left(\begin{array}{cc} %
 0 & -1\\
 1 & 0 \end{array}\right)$}}\hfill\\
\end{tabular}$}
\end{table}

\begin{bexample}[Modular curves]\label{appendix-b-modular-curves}
$\bullet$\hskip 2pt The {\em modular group}
\begin{equation}\label{modular-group}
\Gamma(1):=SL_2(\mathbb{Z}):=\{A=\left(\begin{array}{cc} %
 a & b\\
 c  &d \end{array}\right)\in M_2(\mathbb{Z})\, |\, \det A=1\}.
\end{equation}
$\bullet$\hskip 2pt The {\em principal congruence subgroup of level} $N>1$, $N\in\mathbb{N}$, is the subgroup $\Gamma(N)\subset\Gamma(1)$ defined by the following exact sequence
\begin{equation}\label{principal-congruence-subgroup-level}
  \xymatrix{0\ar[r]&\Gamma(N)\ar[r]&\Gamma(1)\ar[r]^(0.37){\rm mod N}&SL_2(\mathbb{Z}/N\mathbb{Z})\ar[r]&0\\}
\end{equation}
hence
\begin{equation}\label{principal-congruence-subgroup-level-a}
  \Gamma(N)=\{A\in\Gamma(1)\, |\, A=\left(\begin{array}{cc} %
 1 & 0\\
 0  & 1 \end{array}\right)\, (\hbox{\rm mod\hskip 2pt $N$}) \}.
\end{equation}

$\bullet$\hskip 2pt A {\em congruence subgroup} is a subgroup $\Gamma\le \Gamma(1)$ that contains $\Gamma(N)$ for some $N$: $\Gamma(N)\le \Gamma\le \Gamma(1)$. The smallest such $N$ is called the {\em level} of $\Gamma$.
One has the following properties for congruence subgroups.

$\blacksquare$\hskip 2pt Each congruence subgroup $\Gamma\le \Gamma(1)$ has finite index in $\Gamma(1)$. The index of $\pm\Gamma=\Gamma\bigcup -\Gamma$ is denoted by $\mu(\Gamma)=[\Gamma:\pm\Gamma]$.

$\blacksquare$\hskip 2pt Not every subgroup of finite index is a congruence subgroup.\footnote{For each odd number $N>1$, there is a normal subgroup of index $6N^2$ which is not a congruence subgroup.}

$\blacksquare$\hskip 2pt If $\Gamma_1$ and $\Gamma_2$ are congruence subgroups, then so is $\Gamma_1\bigcap\Gamma_2$.\footnote{In fact, $\Gamma(N)\bigcap\Gamma(N)=\Gamma({\rm l.c.m.}(N,M))$.}

$\blacksquare$\hskip 2pt If $\Gamma$ is a congruence subgroup, then $\alpha^{-1}\Gamma\alpha\bigcap\Gamma(1)$ is also a congruence subgroup, where $\alpha\in GL_2^+(\mathbb{Q}):=\left\{ g\in GL_2(\mathbb{Q})\, |\, \det(g)>0\right\}$.\footnote{In fact, $\alpha^{-1}\Gamma\alpha\bigcap \Gamma(1)\supset \Gamma(ND)$, where $D=\det(\alpha)$.}

$\blacksquare$\hskip 2pt In particular, $\Gamma$ and $\Gamma_1=\alpha^{-1}\Gamma\alpha$ are commensurable subgroups, i.e., $\Gamma\bigcap\Gamma_1$ has finite index in both $\Gamma$ and $\Gamma_1$.

$\blacksquare$\hskip 2pt In the following we define some distinguished congruence groups corresponding to upper-triangular matrices of {\em level} $N$.

\begin{equation}\label{definition-gamma-0-n}
  \Gamma_0(N):=\left\{A\in \Gamma(1)\, |\, A\cong \left(\begin{array}{cc} %
 * & *\\
 0  & * \end{array}\right)\, ({\rm mod} N) \right\}.
\end{equation}

\begin{equation}\label{definition-gamma-1-n}
  \Gamma_1(N):=\left\{A\in \Gamma(1)\, |\, A\cong \left(\begin{array}{cc} %
 1 & *\\
 0  & 1 \end{array}\right)\, ({\rm mod} N)\right\}.
\end{equation}

In Tab. \ref{distinguished-congruence-groups-and-properties} are reported some properties of such groups.\footnote{An alternative way to write $\Gamma_0(N)$ is the following:
$$\Gamma_0(N)=\alpha_N\Gamma(1)\alpha^{-1}\bigcap\Gamma(1)=\beta_N^{-1}\Gamma(1)\beta_N\bigcap\Gamma(1),$$
with
$$\alpha_N=\left(\begin{array}{cc}
 1 & 0\\
 0  & N \end{array}\right),$$

 $$\beta_N=N\alpha^{-1}_N=\left(\begin{array}{cc}
 N & 0\\
 0  & 1 \end{array}\right)\in GL_2^+(\mathbb{Q})$$ and
$$\alpha_N\left(\begin{array}{cc}
 a & b\\
 c  & d \end{array}\right)\alpha_N^{-1}=\left(\begin{array}{cc}
 0 &b/N\\
 cN  & d \end{array}\right).$$}

$\blacksquare$\hskip 2pt $\Gamma_0(N)$ and $\Gamma_1(N)$ are respectively preimages of the following {\em Borel subgroups} of $SL_2(\mathbb{Z}/N\mathbb{Z})$:
\begin{equation}\label{borel-subgroups}
 B_0(\mathbb{Z}/N\mathbb{Z}):=\left\{\left(\begin{array}{cc} %
 * & *\\
 0  & * \end{array}\right)\right\},\,  B_1(\mathbb{Z}/N\mathbb{Z}):=\left\{\left(\begin{array}{cc} %
 1 & *\\
 0  & 1 \end{array}\right)\right\}.
\end{equation}

$\bullet$\hskip 2pt {\rm(Non-compact modular curves)} If $\Gamma$ is a congruence sub-group, the quotient space $X'_\Gamma:=H/\Gamma$ can be made into a Riemann surface such that the quotient map $p_\Gamma:H\to X'_\Gamma$ is a holomorphic map.

$\bullet$\hskip 2pt {\rm(Compact modular curves)} let $H^*:=H\bigcup\mathbb{Q}\bigcup\{\infty\}=H\bigcup\mathbb{P}^1(\mathbb{Q})$ obtained by $H$ adding the {\em cusp-points} $\mathbb{P}^1(\mathbb{Q})$, has a natural structure of compact topological space, on the which acts in natural way the group $GL_2^+(\mathbb{Q})$ (hence $\Gamma$) by putting $\gamma(\infty)=\frac{a}{c}$.\footnote{$GL_2^+(\mathbb{Q})$ acts on $\mathbb{P}^1(\mathbb{Q})$ by the rule $\left(\begin{array}{cc} %
 a & b\\
 c  & d \end{array}\right)\left(\begin{array}{c} %
 x \\
 y \end{array}\right)=\left(\begin{array}{c} %
 ax+by\\
 cx+dy \end{array}\right)$ if $[x,y]\in \mathbb{P}^1(\mathbb{Q})$. In particular one can state that $\mathbb{P}^1(\mathbb{Q})=SL_2(\mathbb{Z})\cdot\infty$, hence $\mathbb{P}^1(\mathbb{Q})/SL_2(\mathbb{Z})=\infty$.} Then $X(\Gamma)=X_\Gamma:=H^*/\Gamma$ has a natural structure of compact Riemann surface that contains $X'_\Gamma$ as an open subspace, with a finite complement:
\begin{equation}\label{finite-complement}
  {\rm cusp}(\Gamma)={\rm cusp}(X_{\Gamma}):=X_{\Gamma}\setminus X'_{\Gamma}=\mathbb{P}^1(\mathbb{Q})/\Gamma.
\end{equation}
 Furthermore the surjective mapping $p_\Gamma:H^*\to X(\Gamma)$ is holomorphic.\footnote{The topology of $H^*$ is an open set of the Riemann sphere $\mathbb{P}^1(\mathbb{C})$. The group $\Gamma$ acts on the subset $\mathbb{Q}\bigcup\{\infty\}$, breaking it into finitely many orbits called the {\em cusps} of $\Gamma$. If $\Gamma$ acts transitively on $\mathbb{Q}\bigcup\{\infty\}$, $X(\Gamma)$ results the Alexandrov compactified of $X'_\Gamma$.
 The topology on $H^*$ is obtained by taking as basis $\{U, U_r, U_{r,m,n}\}$, where $U$ is any open subset of $H$, $U_r:=\{\infty\}\bigcup\{z\in H\, |\, \Im z>r,\, r>0\}$, $U_{r,m,n}$ are transformed of $U_r$, by means
 $\left(\begin{array}{cc} %
 a & -m\\
 c  & n \end{array}\right)$, $m,\, n\in\mathbb{N}$, $an+cm=1$.
 $X_\Gamma$ is compact since $H^*$ is compact and $p_\Gamma:H^*\to X_\Gamma$ is surjective. In general $X_\Gamma$ is the union of finitely many compact sets. For $\Gamma(1)$ the stabilizer of $\infty$ is $\Gamma(1)_\infty=\{\pm\left(\begin{array}{cc} %
 1 & *\\
 0  & 1 \end{array}\right)\}$. One has $\Gamma_\infty=\Gamma\bigcap \Gamma(1)\supset\{\left(\begin{array}{cc} %
 1 & N\mathbb{Z}\\
 0  & 1 \end{array}\right)\}$ for some positive integer $N$. Let $r_\infty$ be the minimal positive integer ({\rm widt of period}) such that $\left(\begin{array}{cc} %
 1 & r_\infty\\
 0 & 1 \end{array}\right)\in \Gamma_\infty$. The local coordinate at $\infty$ is given by $q=e^{2\pi i z/r_\infty}$.}
\begin{table}[t]
\caption{Metric classification of Riemann surfaces and their universal coverings}
\label{metric-classification-riemann-surfaces}
\scalebox{0.8}{$\begin{tabular}{|l|c|c|l|}
\hline
{\rm{\footnotesize Name}}&{\rm{\footnotesize Curvature}}&{\rm{\footnotesize Simply connected Riemann surfaces}}&{\rm{\footnotesize Universal covering classification}}\hfill\\
\hline\hline
{\rm{\footnotesize hyperbolic}}&{\rm{\footnotesize $-1$}}&{\rm{\footnotesize $\Delta\cong H$}}&{\rm{\footnotesize  $H\to Y=H/G$; ($Y=H/SL_2(\mathbb{Z})\cong\mathbb{C}$).}}\hfill\\
\hline
{\rm{\footnotesize parabolic}}&{\rm{\footnotesize $0$}}&{\rm{\footnotesize $\mathbb{C}$}}&{\rm{\footnotesize $\mathbb{C}\to Y=\mathbb{C}/G$; ($Y=T:=\mathbb{C}/L$).}}\hfill\\
\hline
{\rm{\footnotesize elliptic}}&{\rm{\footnotesize $1$}}&{\rm{\footnotesize $\hat S^1=\mathbb{C}\bigcup\{\infty\}=\mathbb{P}^1(\mathbb{C})$}}&{\rm{\footnotesize $\hat S^1\to Y=\hat S^1/G$}}\hfill\\
\hline
\multicolumn{4}{l}{\rm{\footnotesize Upper half-plane: $H:=\{z\in\mathbb{C}\, |\, \Im(z)>0\}$; Open disc: $\Delta:=\{z\in\mathbb{C}\, |\, |z|<1\}$.}}\hfill\\
\multicolumn{4}{l}{\rm{\footnotesize Lattice: $L:=\mathbb{Z}+\mathbb{Z}\tau$, $\tau\in\mathbb{C}$, $\tau\not=0$.}}\hfill\\
\multicolumn{4}{l}{\rm{\footnotesize Holomorphic elliptic modular function: $J:H/SL_2(\mathbb{Z})\cong \mathbb{C}$, $J(z)=g_2^3/(g_2^3-27 g_3^2)$.}}\hfill\\
\multicolumn{4}{l}{\rm{\footnotesize $g_2$ and $g_3$ are the coefficients in the equation $z^3-g_2 z-g_3=w^2$ of $\mathbb{C}/L$.}}\hfill\\
\multicolumn{4}{l}{\rm{\footnotesize Universal coverings $X\to Y$ are called hyperbolic, parabolic, elliptic,}}\hfill\\
\multicolumn{4}{l}{\rm{\footnotesize according to the type of the Riemann surface $X$.}}\hfill\\
\end{tabular}$}
\end{table}

One has the following properties for congruence subgroups.

$\blacksquare$\hskip 2pt If $\Gamma_1\le \Gamma_2$ are two congruence subgroups, then the inclusion map induce a quotient map $p_{\Gamma_1,\Gamma_2}:X_{\Gamma_1}\to X_{\Gamma_2}$ such that the following diagram is commutative
\begin{equation}\label{quotient-map-associated-to-congruence-group-subgroup-ofcongruence-group}
  \xymatrix{H^*\ar[d]_{p_{\Gamma_1}}\ar[rd]^{p_{\Gamma_2}}&\\
  X_{\Gamma_1}=H^*/\Gamma_1\ar[r]_{p_{\Gamma_1,\Gamma_2}}&H^*/\Gamma_2=X_{\Gamma_2}\\}
\end{equation}
The mapping $p_{\Gamma_1,\Gamma_2}$ is holomorphic with degree
\begin{equation}\label{degree-quotient-map-associated-to-congruence-group-subgroup-ofcongruence-group}
  {\rm deg}(p_{\Gamma_1,\Gamma_2})=[\pm\Gamma_2:\pm\Gamma_1]=\mu(\Gamma_1)/\mu(\Gamma_2).
\end{equation}

$\blacksquare$\hskip 2pt If $\alpha\in GL_2^+(\mathbb{Q})$ and $\Gamma_1\le \Gamma_2$ are two congruence subgroups related by $\alpha$: $\alpha\Gamma_1\alpha^{-1}\le \Gamma_2$, then there exists a unique holomorphic map $p_{\Gamma_1,\Gamma_2}:X_{\Gamma_1}\to X_{\Gamma_2}$ such that $p_{\Gamma_2}\circ\alpha=p_{\Gamma_1,\Gamma_2}\circ p_{\Gamma_1}$.

$\blacksquare$\hskip 2pt Let $\Gamma\subset\Gamma(1)$ be a congruence subgroup containing $\{\pm 1\}$. Then $X'_\Gamma$ is endowed with the quotient topology $p_\Gamma\to X'_\Gamma$. Under this topology $X'_\Gamma$ is Hausdorff. The stabilizer group of $z\in H$ is finite cyclic. Furthermore $\Gamma_z/\{\pm 1\}$ is isomorphic to one of the following groups: $\{1\}$, $\mathbb{Z}_2$, $\mathbb{Z}_3$. Up to $\Gamma(1)$-equivalence, the only points with non-trivial stabilizer are $z=i$ and $z=\rho$. There are only finitely many {\em elliptic points} ({\rm modulo $\Gamma$}) of $H$, i.e., $z\in H$ such that $\Gamma_z/\{\pm 1\}$ is not-trivial.\footnote{This is true for $\Gamma(1)$, hence true for any subgroup of finite index. Every point of $X_\Gamma$ is an orbit $\Gamma .z$ for $z\in H$. If $z$ is not elliptic, then $z$ has s neighborhood $U$ such that $\gamma(U)\bigcap U\not=\varnothing$ iff $\gamma(z)=z$, i.e., $\Gamma$ acts properly discontinuously on non-elliptic point. If $z$ is elliptic, locally its neighborhood is $D/\mu_2$ or $D/\mu_3$ with local coordinate given by $z\mapsto z^2$ or $z\mapsto z^3$.}

$\blacksquare$\hskip 2pt The genus of $X_{\Gamma(N)}$ is given in {\em(\ref{genus-modular-curves})}.\footnote{The genus $g(X_{\Gamma(N)})$ is obtained from the ramified covering $X_{\Gamma(N)}\to \Gamma(1)$, ($\Gamma(N)$ is of finite index in $\Gamma(1)$). In this way one obtains also the proof of existence of subgroups of finite index in $\Gamma(1)$ which are not congruence subgroups. \cite{KLEIN-FRICKE}}
\begin{equation}\label{genus-modular-curves}
  g(X_{\Gamma(N)})=\left\{\begin{array}{ll}
   0 & \mbox{if  $N\le 2$} \\
   1+\frac{N^2(N-6)}{24}\prod_{p|N}(1-p^{-2}) & \mbox{if $N> 2$, ($p=$ prime)}.
   \end{array}\right.
\end{equation}

$\blacksquare$\hskip 2pt In particular the modular curve $X_0(1)$ can be identified with the Riemann sphere. In fact, the holomorphic map $j:X_0(1)\to \mathbb{P}^1(\mathbb{C})$ sending $SL_2(\mathbb{Z})z\mapsto j(z)$ and $\infty \mapsto \infty$ is a degree $1$ map between two compact Riemann surfaces, hence an isomorphism.

$\bullet$\hskip 2pt Let $\Gamma$ be a congruence subgroup containing $\{\pm 1\}$.  One has the following formula:
\begin{equation}\label{alternative-form-for-genus}
  g(X_\Gamma)=1+\frac{d}{12}-\frac{1}{4}e_2-\frac{1}{3}e_3-\frac{1}{2}e_\infty
\end{equation}
with $d=\deg(p_{\Gamma,\Gamma(1)})$, $e_\infty$ is the number of cusps on $X_\Gamma$ and $e_r$, $r=2,3$ is the number of elliptic points in the fiber over the order $r$ elliptic point $P_r\in X_{\Gamma(1)}$.

In fact, one has a covering map $p_{\Gamma.\Gamma(1)}:X_\Gamma\to X_{\Gamma(1)}$ of degree $\deg(p_{\Gamma.\Gamma(1)})=[\Gamma(1):\Gamma]$, since for non-elliptic point $x$ we have $\sharp((\Gamma z/\Gamma(1))=[\Gamma(1):\Gamma]$, and there are only finitely many elliptic points. By means of the Riemann-Hurwitz formula\footnote{$\chi=2-2g$ is the Euler characteristic.}
\begin{equation}\label{riemann-hurwitz-formula}
 2g(X_\Gamma)-2=(2g(X_{\Gamma(1)})-2)\deg(p_{\Gamma.\Gamma(1)})+b
\end{equation}
where $b$ is the total ramification degree given by the following formula:
\begin{equation}\label{total-ramification-degree}
 b=\sum_{x\in X_\Gamma}(e_x-1)=\sum_{y\in X_{\Gamma(1)}}\deg R_y=\sum_{y\in X_{\Gamma(1)}}\sum_{x\in X_\Gamma}p^{-1}_{\Gamma.\Gamma(1)(y)}(e_x-1),
\end{equation}
where $e_x$ is the ramification degree at $x\in X_\Gamma$. Recall that the ramification points are a subset of the fibers over elliptic points of $X_{\Gamma(1)}$. Then the ramification degree must divide $2$ or $3$, the only possible orders of elliptic points. Let us recall that the $r$ order elliptic points $P_r\in X_{\Gamma(1)}$ are only $P_2=i$ and $P_3=\rho$.  Then the number of non-elliptic points in the fiber over $P_r$ is $\frac{d-e_r}{r}$, hence $\deg R_r=(r-1)\frac{d-e_r}{r}$. Furthermore one has $\deg R_\infty=d-e_\infty$. So from the formula {\rm(\ref{riemann-hurwitz-formula})} we get
\begin{equation}\label{riemann-hurwitz-formula-a}
\left\{\begin{array}{ll}
   2g(X_\Gamma)-2 & =  -2d+\sum_{r=2,3,\infty}\frac{r-1}{r}(d-e_\infty)\\
  &= -2d+\frac{1}{2}(d-e_2)+\frac{2}{3}(d-e_3)+(d-e_\infty).
  \end{array}\right.
\end{equation}
Therefore the formula {\rm(\ref{alternative-form-for-genus})} holds.

$\bullet$\hskip 2pt For prime level $N=p\ge 5$ one can write
\begin{equation}\label{alternative-form-for-genus-b}
  g(X_{\Gamma(N)})=\frac{1}{24}(p+2)(p-3)(p-5).
\end{equation}

(See Tab.\ref{examples-compact-modular-curves} where are reported some examples of compact modular curves.)
\end{bexample}

\begin{table}[t]
\caption{Examples of compact modular curves $X(N)=H^*/\Gamma(N)$.}
\label{examples-compact-modular-curves}
\scalebox{0.9}{$\begin{tabular}{|l|c|c|l|}
\hline
{\rm{\footnotesize Name-Symbol}}&{\rm{\footnotesize Genus}}&{\rm{\footnotesize Cusp number}}&{\rm{\footnotesize Galois group $SL_2(N)/\{\pm 1\}$ }}\hfill\\
{\rm{\footnotesize}}&{\rm{\footnotesize }}&{\rm{\footnotesize }}&{\rm{\footnotesize of covering $X(N)\to X(1)$}}\hfill\\
\hline\hline
{\rm{\footnotesize Riemann sphere $X_0(1)$}}&{\rm{\footnotesize $0$}}&{\rm{\footnotesize $0$}}&{\rm{\footnotesize  $\{1\}$}}\hfill\\
\hline
{\rm{\footnotesize icosahedral $X(5)$}}&{\rm{\footnotesize $0$}}&{\rm{\footnotesize $12$}}&{\rm{\footnotesize $A_5\cong PSL_2(5)$}}\hfill\\
\hline
{\rm{\footnotesize Klein quartic $X(7)$}}&{\rm{\footnotesize $3$}}&{\rm{\footnotesize $24$}}&{\rm{\footnotesize $PSL_2(7)$}}\hfill\\
\hline
{\rm{\footnotesize $X(11)$}}&{\rm{\footnotesize $26$ }}&{\rm{\footnotesize }}&{\rm{\footnotesize }}\hfill\\
\hline
{\rm{\footnotesize classical modular curve $X_0(N)$}}&{\rm{\footnotesize }}&{\rm{\footnotesize }}&{\rm{\footnotesize }}\hfill\\
\hline
{\rm{\footnotesize $X_1(N)$}}&{\rm{\footnotesize $0$, for $N=1,\cdots,10,12$}}&{\rm{\footnotesize }}&{\rm{\footnotesize }}\hfill\\
\hline
\multicolumn{4}{l}{\rm{\footnotesize $SL_2(N)/\{\pm 1\}\cong PSL_2(N)$ for $N$ prime.}}\hfill\\
\end{tabular}$}
\end{table}

\appendix{\bf Appendix C: Modular functions, forms and cusps.}\label{appendix-b}
\renewcommand{\theequation}{C.\arabic{equation}}
\setcounter{equation}{0}  

In this appendix we summarize some fundamental definitions and results concerning modular functions, modular forms and cusp forms that are used in the paper.

\begin{cdefinition}\label{c-weakly-modular-function}
Let $H=\{z\in\mathbb{C}\, |\, \Im(z)>0\}$. $H$ can be identified with the open disk $\Delta$. Set
$$G=GL_2^+(\mathbb{R})=\{g\in\left(
                               \begin{array}{cc}
                                 a & b \\
                                 c & d \\
                               \end{array}
                             \right),\, a,\,b,\,c,\, d\in\mathbb{R}\}.$$
Let us consider the action $G\times H\to H$ by means of fractional linear transformations of $H$: $(g,z)\mapsto g(z)=\frac{az+b}{cz+d}$. Set $\Gamma\subset \Gamma(1)=SL_2(\mathbb{Z})\subset G$ a subgroup of the modular group $\Gamma(1)$. A function $f:H\to\mathbb{C}$ is called {\em weakly modular function of weight $k\in\mathbb{Z}$} on $\Gamma$, if are satisfied the following conditions:

{\em (i)} $f$ is meromorphic;

{\em (ii)} $f(g(z))=j(g,z)^k\, f(z)$ for all $g\in\Gamma$, with $j(g,z)=cz+d$.
\end{cdefinition}

\begin{cproposition}\label{c-weakly-modular-function-properties}
Every meromorphic function $f:H\to \mathbb{C}$ is weakly modular of weight $k$ for some $\Gamma\le GL_2^+(\mathbb{R})$.
\end{cproposition}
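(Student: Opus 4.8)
The plan is to produce, for an arbitrary meromorphic $f:H\to\mathbb{C}$, an explicit pair $(\Gamma,k)$ rendering $f$ weakly modular of weight $k$ on $\Gamma$ in the sense of Definition C.1. The key observation is that condition (ii) in that definition constrains $f$ only through the elements actually lying in $\Gamma$, so choosing $\Gamma$ as small as possible minimizes the constraints. I would therefore take $\Gamma$ to be the trivial subgroup $\{I\}$, where $I=\left(\begin{smallmatrix}1&0\\0&1\end{smallmatrix}\right)\in SL_2(\mathbb{Z})\le GL_2^+(\mathbb{R})$, so that the choice is legitimate both for the weight-$k$ definition (which requires $\Gamma\subset\Gamma(1)=SL_2(\mathbb{Z})$) and for the ambient hypothesis $\Gamma\le GL_2^+(\mathbb{R})$ of the statement.

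With this choice the only instance of condition (ii) to verify is the one for $g=I$. First I would note that $I$ acts on $H$ as the identity, $I(z)=\frac{1\cdot z+0}{0\cdot z+1}=z$, and that its automorphy factor is $j(I,z)=0\cdot z+1=1$. Substituting into (ii) yields $f(I(z))=f(z)=1^{k}\,f(z)$, an identity valid for every $z\in H$ and every weight $k\in\mathbb{Z}$. Condition (i) holds by hypothesis, since $f$ is assumed meromorphic. Hence $f$ is weakly modular of weight $k$ on $\{I\}$ for all $k$, which establishes the asserted existence.

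There is essentially no obstacle here: the statement is a pure existence claim, and the trivial group trivializes the transformation law. The only point deserving a remark is that one cannot in general enlarge $\Gamma$ while keeping $f$ arbitrary. For instance, adjoining $-I$ forces $f(z)=(-1)^{k}f(z)$ (because $(-I)(z)=z$ while $j(-I,z)=-1$), which holds for all $f$ only when $k$ is even; and for a generic meromorphic $f$ no matrix $g\in SL_2(\mathbb{Z})$ acting nontrivially on $H$ can satisfy the functional equation, since that would impose a strong rigidity on $f$. Thus the trivial subgroup is the canonical universal witness, and the proposition is best read as the observation that the \emph{weakly} modular condition becomes vacuous once one is free to choose $\Gamma$.
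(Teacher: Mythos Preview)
Your argument is correct: taking $\Gamma=\{I\}$ trivializes condition (ii), and this is exactly the intended reading of the proposition. The paper states this proposition without proof, treating it as an immediate observation; your write-up supplies precisely the one-line justification (plus the useful remark on why enlarging $\Gamma$ generally fails) that the paper omits.
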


\begin{cdefinition}\label{c-modular-function-form-cusp}

$\bullet$\hskip 2pt  A function $f:H\to\mathbb{C}$ is called {\em modular function of weight $k\in\mathbb{Z}$} on $\Gamma$, if are satisfied the following conditions:

{\em (i)} $f$ is a weakly modular function of weight $k$;

{\em (ii)} $f$ is meromorphic at $\infty$.

The set of modular functions of weight $k$ is denoted by $\mathbf{A}_k=\mathbf{A}_k[\Gamma]$.

$\bullet$\hskip 2pt  A {\em modular form of weight $k\in\mathbb{Z}$} on $\Gamma$, is a modular function of weight $k$ that is holomorphic on $H$ and at $\infty$.

The set of modular forms of weight $k$ is denoted by $\mathbf{M}_k=\mathbf{M}_k[\Gamma]$.

$\bullet$\hskip 2pt  A {\em cusp form of weight $k\in\mathbb{Z}$} on $\Gamma$, is a modular form of weight $k$ that vanishes at $\infty$.

The set of cusp forms of weight $k$ is denoted by $\mathbf{S}_k=\mathbf{S}_k[\Gamma]$.
\end{cdefinition}

\begin{cproposition}\label{c-modular-function-form-cusp-properties}
$\bullet$\hskip 2pt  $\mathbf{A}_k$, $\mathbf{M}_k$ and $\mathbf{S}_k$ are $\mathbb{C}$-vector spaces and one has:

{\em (i)} $\mathbf{A}=\bigoplus_{k}\mathbf{A}_k$ is a graded field:

{\em (ii)} $\mathbf{M}=\bigoplus_{k}\mathbf{M}_k$ is a graded ring:

{\em (iii)} $\mathbf{S}=\bigoplus_{k}\mathbf{S}_k$ is a graded ideal of $\mathbf{M}$.

The functions in $\mathbf{A}$, $\mathbf{M}$ and $\mathbf{S}$ do not satisfy the transformation properties {\em(ii)} in Definition C\ref{c-weakly-modular-function}.

$\bullet$\hskip 2pt  $\mathbf{M}=\mathbb{C}[E_4,E_6]$. (For the definition of $E_4$ and $E_6$ see Tab. \ref{c-table-example-modular-functions-forms-cusps}.)

$\bullet$\hskip 2pt  $\mathbf{S}=\triangle\cdot\mathbf{M}$. (For the definition of $\triangle$ see Tab. \ref{c-table-example-modular-functions-forms-cusps}.)

$\bullet$\hskip 2pt  If $k$ is an even integer then $\mathbf{A}_k$ is a one-dimensional $\mathbf{A}_0$-vector space generated by $(E_6/E_4)^{k/2}$.

$\bullet$\hskip 2pt  If $k$ is an odd integer then $\mathbf{A}_k=\{0\}$.

$\bullet$\hskip 2pt  $\mathbf{A}_0=\mathbb{C}(j)$, the space of rational functions generated by the $j$-invariant.

$\bullet$\hskip 2pt  $\mathbf{A}=\mathbb{C}(E_4,E_6)$ is the quotient field of $\mathbf{M}$.
\end{cproposition}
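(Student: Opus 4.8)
The plan is to deduce every assertion from a single analytic input, the \emph{valence formula} (the $k/12$-formula) for the full modular group $\Gamma(1)=SL_2(\mathbb{Z})$. First I would show that for every nonzero $f\in\mathbf{A}_k$,
\[
\nu_\infty(f)+\tfrac12\nu_i(f)+\tfrac13\nu_\rho(f)+\sum_{P}\nu_P(f)=\frac{k}{12},
\]
where $\rho=e^{2\pi i/3}$, $\nu_P$ is the order of $f$ at $P$, and the sum runs over the $\Gamma(1)$-orbits in $H$ distinct from those of $i$ and $\rho$. I would obtain this by integrating $\frac{1}{2\pi i}\frac{df}{f}$ around the boundary of the standard fundamental domain $\{\,|z|\ge 1,\ |\Re z|\le\tfrac12\,\}$, truncated at large height and indented by small arcs about the corners $i,\rho,\rho+1$ and about the cusp; the transformation law (ii) of Definition C\ref{c-weakly-modular-function}, applied to the generators $S$ and $T$, makes the integrals over paired boundary arcs cancel, leaving only the corner and cusp contributions. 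This is the main obstacle, since all the arithmetic is concentrated here: the factors $\tfrac12,\tfrac13$ arise from the indentation angles ($\pi$ at $i$, $\pi/3$ at each of $\rho,\rho+1$), the term $\nu_\infty(f)$ from the top segment written in the cusp parameter $q=e^{2\pi i z}$, and the $\tfrac{1}{12}$ from the arc of $|z|=1$ combined with the relation $f(-1/z)=z^{k}f(z)$.

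Second, I would read the low-weight structure directly off the formula. Since $-I\in\Gamma(1)$ forces $f=(-1)^k f$, every space of odd weight vanishes, which gives $\mathbf{A}_k=\{0\}$ for $k$ odd. For holomorphic $f$ the left-hand side is a non-negative combination, so $\mathbf{M}_k=0$ when $k<0$ or $k=2$, while $\dim_{\mathbb{C}}\mathbf{M}_0=1$ and $\dim_{\mathbb{C}}\mathbf{M}_k=1$ for $k\in\{4,6,8,10,14\}$. I would then introduce the Eisenstein series $E_4\in\mathbf{M}_4$ and $E_6\in\mathbf{M}_6$ (holomorphic and nonzero at the cusp) together with the discriminant $\triangle$, a nonzero multiple of $E_4^3-E_6^2$, and verify that $\triangle\in\mathbf{S}_{12}$ has a simple zero at the cusp and, by the valence formula, no zero at all on $H$.

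Third, the ring and ideal statements follow by induction on $k$. Given $f\in\mathbf{M}_k$, I subtract the scalar multiple of a monomial $E_4^{a}E_6^{b}$ with $4a+6b=k$ that matches the value of $f$ at the cusp; the difference is a cusp form, so $\triangle$ divides it (being zero-free on $H$, the quotient is holomorphic on $H$ and at the cusp, of weight $k-12$), and the inductive hypothesis applies to that quotient in $\mathbf{M}_{k-12}$. Carrying this out simultaneously yields $\mathbf{M}=\mathbb{C}[E_4,E_6]$, the algebraic independence of $E_4,E_6$ (forced by the dimension count, so that $\mathbf{M}$ is a genuine polynomial ring), and $\mathbf{S}=\triangle\cdot\mathbf{M}$; the graded-ideal assertion is then immediate.

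Finally, for the field statements I would set $j=E_4^3/\triangle\in\mathbf{A}_0$ and invoke Example B\ref{appendix-b-modular-curves}, where $j:X_0(1)\to\mathbb{P}^1(\mathbb{C})$ is identified as an isomorphism of compact Riemann surfaces; the meromorphic functions on $X_0(1)$ are therefore exactly the rational functions of $j$, which gives $\mathbf{A}_0=\mathbb{C}(j)$. For even $k$, the ratio $E_6/E_4$ has weight $2$, so $(E_6/E_4)^{k/2}$ has weight $k$, and dividing any $f\in\mathbf{A}_k$ by it lands in $\mathbf{A}_0$; hence $\mathbf{A}_k=\mathbf{A}_0\cdot(E_6/E_4)^{k/2}$ is one-dimensional over $\mathbf{A}_0$. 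Summing over $k$ and clearing denominators then identifies $\mathbf{A}=\bigoplus_k\mathbf{A}_k$ with $\mathbb{C}(E_4,E_6)$, the fraction field of $\mathbf{M}=\mathbb{C}[E_4,E_6]$.
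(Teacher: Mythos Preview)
Your proof is correct and follows the standard route (contour integration to obtain the valence formula, then the dimension count, then the inductive division by $\triangle$), essentially the argument in Serre's \emph{A Course in Arithmetic}, which the paper itself cites. However, the paper gives \emph{no proof} of this proposition at all: Appendix~C is purely expository, and the statement is recorded without argument as a summary of well-known facts, immediately followed by Table~\ref{c-table-example-modular-functions-forms-cusps} and Example~C\ref{c-example-modular-functions-forms-cusps}. So there is nothing to compare your approach against; you have supplied the omitted proof, and done so correctly.
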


\begin{table}[t]
\caption{Examples of modular functions, modular forms and cusp forms.}
\label{c-table-example-modular-functions-forms-cusps}
\scalebox{0.9}{$\begin{tabular}{|l|l|l|}
\hline
\hfil{\rm{\footnotesize Name}}\hfil&\hfil{\rm{\footnotesize Definition}}\hfil&\hfil{\rm{\footnotesize Properties}}\hfil\\
\hline\hline
{\rm{\footnotesize Eisenstein series}}&{\rm{\footnotesize $G_k(z)=\sum'_{m,\,n\in\mathbb{Z}}\frac{1}{(mz+n)^k}$}}&{\rm{\footnotesize $\bullet$ converges absolutely for $k\ge 3$}}\hfill\\
{\rm{\footnotesize }}&{\rm{\footnotesize }}&{\rm{\footnotesize $\bullet$ $G_k\in\mathbf{M}_k$, for $k\ge 3$}}\hfill\\
{\rm{\footnotesize }}&{\rm{\footnotesize }}&{\rm{\footnotesize $\bullet$ $G_k=0$ for $k\equiv\, 1\, \hbox{\rm mod $2$}$}}\hfill\\
{\rm{\footnotesize }}&{\rm{\footnotesize }}&{\rm{\footnotesize $\bullet$ $G_k(z)=\zeta(k)\sum_{m,n; gcd(m,n)=1}\frac{1}{(mz+n)^k}$}}\hfill\\
{\rm{\footnotesize }}&{\rm{\footnotesize }}&{\rm{\footnotesize $\bullet$ $G_k(z)=2\zeta(k)E_k(z)$ for $k\equiv\, 0\, \hbox{\rm mod $2$}$}}\hfill\\
\hline
{\rm{\footnotesize Poincar\'e series}}&{\rm{\footnotesize $P_{m,k}(z)=\sum_{\gamma\in\Gamma_\infty\setminus\Gamma}\frac{1}{j(\gamma,z)^k}e^{2\pi im\gamma(z)}$}}&{\rm{\footnotesize $P_{0,k}(z)=G_k(z)$}}\hfill\\
{\rm{\footnotesize }}&{\rm{\footnotesize }}&{\rm{\footnotesize $P_{m,k}(z)\in\mathbf{S}_k$, $m>0,\, k\ge 3$.}}\hfill\\
\hline
{\rm{\footnotesize Discriminant form}}&{\rm{\footnotesize $\triangle=g_2^3-27 g_3^2$}}&{\rm{\footnotesize $\triangle\in \mathbf{S}_{12}$}}\hfill\\
{\rm{\footnotesize }}&{\rm{\footnotesize $g_2=60G_4=\frac{4\pi^4}{3}E_4$}}&{\rm{\footnotesize $\triangle(z)=(2\pi)^{12}\sum_{n\ge 1}\tau(n)\, e^{2n\pi iz}$}}\hfill\\
{\rm{\footnotesize }}&{\rm{\footnotesize $g_3=140G_6=\frac{8\pi^4}{27}E_6$}}&{\rm{\footnotesize $\tau(n)$ Ramanujan function}}\hfill\\
\hline
{\rm{\footnotesize $j$-invariant}}&{\rm{\footnotesize $j(z)=1728\frac{g_2^3}{\triangle}$}}&{\rm{\footnotesize $j(z)\in\mathbf{A}_{0}$}}\hfill\\
{\rm{\footnotesize }}&{\rm{\footnotesize }}&{\rm{\footnotesize $j(z)=1728\frac{E_4^3}{E^3_4-E^2_6}$}}\hfill\\
\hline
\multicolumn{3}{l}{\rm{\footnotesize $\sum'$ denotes that the term $(m,n)=(0,0)$ has been omitted.}}\hfill\\
\multicolumn{3}{l}{\rm{\footnotesize $E_k(z)=1+c_k\sum_{1\le n\le \infty}\sigma_{k-1}(n)e^{2n\pi iz}$}}\hfill\\
\multicolumn{3}{l}{\rm{\footnotesize $\sigma_{k-1}(n)=\sum_{n|d}d^{k-1}$; $c_k=-\frac{2k}{B_k}\mathop{=}\limits^{Euler}\frac{(2\pi i)^k}{(k-1)!\zeta(k)}$.}}\hfill\\
\multicolumn{3}{l}{\rm{\footnotesize $B_k=k^{th}$ Bernoulli number. $\sum_{0\le k\le\infty}B_k\frac{z^k}{k!}=\frac{z}{e^z-1}$}}\hfill\\
\multicolumn{3}{l}{\rm{\footnotesize $\Gamma_\infty=\{\left(
                                                        \begin{array}{cc}
                                                          1 & n \\
                                                          0 & 1 \\
                                                        \end{array}\right)\, |\, n\in\mathbb{Z}\}\le \Gamma(1)$}}\hfill\\
\multicolumn{3}{l}{\rm{\footnotesize $g_2$ is a modular form of weight $4$.}}\hfill\\
\multicolumn{3}{l}{\rm{\footnotesize $j$ is a surjective meromorphic function $H\to\mathbb{C}$, invariant under $SL_2(\mathbb{Z})$-action.}}\hfill\\
\multicolumn{3}{l}{\rm{\footnotesize $j$ gives a bijection between isomorphism classes of elliptic curves over $\mathbb{C}$ and complex numbers.}}\hfill\\
\end{tabular}$}
\end{table}

\begin{cexample}\label{c-example-modular-functions-forms-cusps}
In Tab. \ref{c-table-example-modular-functions-forms-cusps} are reported some examples of modular functions, modular functions and cusp forms that are useful for a direct understanding of this paper.
\end{cexample}
\end{appendices}

\end{document}